\documentclass[12pt,reqno]{amsart}
\usepackage{amssymb,mathrsfs}
\usepackage{colortbl}
\usepackage[]{xcolor}
\textwidth=15cm \textheight=23cm
\oddsidemargin=0.5cm \evensidemargin=0.5cm
\setlength{\marginparwidth}{2cm}
\numberwithin{equation}{section}
\newtheorem{theorem}{Theorem}[section]
\newtheorem{corollary}[theorem]{Corollary}
\newtheorem{lemma}[theorem]{Lemma}
\newtheorem{proposition}[theorem]{Proposition}
\newtheorem{remark}[theorem]{Remark}
\begin{document}
\title[NLS with double nonlinear terms]{Scattering for the mass super-critical \\perturbations of the mass critical nonlinear Schr\"odinger equations}

\author[Xing Cheng]{Xing Cheng}
\address{College of Science, Hohai University,\\
 Nanjing 210098, Jiangsu, China}
\email{chengx@hhu.edu.cn}
\subjclass{Primary 35Q55; Secondary 35L70}
\keywords{Nonlinear Schr\"odinger equation; double nonlinearities, profile decomposition; scattering, blowup.}
\begin{abstract}
We consider the Cauchy problem for the nonlinear
Schr\"odinger equation with double nonlinearities with opposite sign, with one term is mass-critical and the other term is mass-supercritical and energy-subcritical, which includes the famous two-dimensional cubic-quintic nonlinear Schr\"odinger equaton. We prove global wellposedness and scattering in $H^1(\mathbb{R}^d)$ below the threshold for non-radial data when $1 \le d \le 4$.
\end{abstract}

\maketitle

\section{Introduction}
In this article, we will mainly consider the following two kinds of nonlinear Schr\"odinger equations with double nonlinearities:
\begin{equation}\label{eq1.1}
\begin{cases}
i\partial_t u  + \Delta u = |u|^\frac4d u - |u|^{p-1} u,\\
u(0) = u_0 \in H^1(\mathbb{R}^d),
\end{cases}
\end{equation}
and
\begin{equation}\label{eq1}
\begin{cases}
i\partial_t u  + \Delta u = -|u|^\frac4d u + |u|^{p-1} u,\\
u(0) = u_0 \in H^1(\mathbb{R}^d),
\end{cases}
\end{equation}
where $u: \mathbb{R}  \times \mathbb{R}^d \to \mathbb{C}$ is a complex-valued function,
$1+\frac{4}{d} < p  <  1+\frac{4}{d-2},\  d =  3, 4, $ and $1 + \frac4d < p < \infty,\  d = 1,\,2$.

Equations \eqref{eq1.1} and \eqref{eq1} are special cases of the nonlinear Schr\"odinger equations with double nonlinearities
\begin{equation}\label{gmc}
\begin{cases}
i\partial_t u  + \Delta u = \mu_1|u|^{p_1-1} u + \mu_2|u|^{p_2-1} u,\\
u(0) = u_0 \in H^1(\mathbb{R}^d),
\end{cases}
 \end{equation}
 where $1 < p_1 < p_2\leq 1+\frac{4}{d-2}$, for $d\ge 3$, $\ 1< p_1 <  p_2 < \infty$, for $d = 1,\,2$, $\mu_1,\mu_2\in\{\pm1\}$. For the physical background of these kind of equations, we refer the reader to \cite{Barashenkov-Gocheva-Makhankov-Puzynin,F,Pelinovsky-Afanasjev-Kivshar}, and also the introduction in \cite{CMZ}. These kind of equations have been studied intensively in the past decade. In \cite{Akahori-Ibrahim-Kikuchi-Nawa1,Akahori-Ibrahim-Kikuchi-Nawa2,CMZ,KTPV,Miao-Xu-Zhao2,MZZ,Miao-Xu-Zhao1,Tao-Visan-Zhang,Xie,XY,X-Zhang}, the authors study the well-posedness and scattering of these kind of equations.

The local wellposedness theory for \eqref{gmc} can be given by using Banach's fixed point theorem as in \cite{Cazenave,Cazenave-Weissler}. X. Zhang \cite{X-Zhang} investigated the wellposedness, scattering and blowup of \eqref{gmc} in three dimensional case where $p_2 = 5$, where she viewed the equation as a perturbation of the energy-critical defocusing nonlinear Schr\"odinger equation, which has been proven global well-posedness and scatters by J. Colliander, M. Keel, G. Staffilani, H. Takaoka, and T. Tao \cite{CKSTT}.
  T. Tao, M. Visan and X. Zhang considered various cases in \cite{Tao-Visan-Zhang} when $d \ge 3$. In particular, they proved global wellposedness and scattering of the solution to the equation (\ref{gmc}) when $\mu_1=\mu_2= 1$ and $1 + \frac{4}{d}\leq p_1<p_2  \le 1 +  \frac{4}{d-2},\  d \ge 3$.

When $\mu_1=1$, $\mu_2=-1$, C. Miao, G. Xu and L. Zhao \cite{Miao-Xu-Zhao1} considered the case where $p_1 = 3, p_2 = 5$, $d = 3$. The threshold was given by variational method as in \cite{Ibrahim-Masmoudi-Nakanishi} and they also established the linear profile decomposition in $H^1$ in the spirt of \cite{Ibrahim-Masmoudi-Nakanishi}. By using the profile decomposition, they reduced the scattering problem to the extinction of the critical element. The critical element can then be excluded by using the virial identity. They showed the dichotomy of global wellposedness and scattering versus blow-up below the threshold for radial solutions. We also refer to \cite{XY} for further discussion about $1 + \frac{4}{d} < p_1 < p_2 = 1+ \frac{4}{d-2}$, $d = 3$. The radial assumption was removed in dimensions four and higher in \cite{Miao-Xu-Zhao2,MZZ} by using \cite{D5,Killip-Visan}. For the case $1 + \frac{4}{d} = p_1< p_2 \le 1 + \frac{4}{d-2}$, $1\le d\le 4$, X. Cheng, C. Miao, and L. Zhao \cite{CMZ} proved global wellposedness and scattering in $H^1$ in the radial case where they gave the exact value of the threshold which is related to the ground state. We also refer to the recent work of N. Soave \cite{Soave1,Soave} on the study of the properties of ground states of the equations in this case.

When $\mu_1=\mu_2=-1$, T. Akahori, S. Ibrahim, H. Kikuchi, and H. Nawa \cite{Akahori-Ibrahim-Kikuchi-Nawa1, Akahori-Ibrahim-Kikuchi-Nawa2} considered \eqref{gmc} when $1 + \frac{4}{d}< p_1 < p_2 = 1 + \frac{4}{d-2}$, $d \ge 5$. After giving existence of the ground state based on the idea in \cite{Brezis-Nirenberg} and \cite{Ibrahim-Masmoudi-Nakanishi}, they showed a sufficient condition for the global wellposedness and scattering by using the scattering result of the energy-critical focusing nonlinear Schr\"odinger equation in \cite{Killip-Visan}. They also obtained the nine-set theory developed by K. Nakanishi and W. Schlag \cite{NS,NS1} when the Lyapunov functional slightly above the ground state threshold at low frequencies in \cite{AIKN3}. J. Xie \cite{Xie} proved global wellposedness and scattering below the threshold for $1 + \frac{4}{d}< p_1 < p_2 <  1 + \frac{4}{d-2}$ when $d \ge 3$, $ 1 + \frac{4}{d} < p_1 < p_2  <  \infty$ when $ d = 1,2$, by using the argument in \cite{Duyckaerts-Holmer-Roudenko,Holmer-Roudenko}.

If $\mu_1 = - 1$, $\mu_2 = 1$, with $p_1 = 3$, $p_2 = 5$ when $d = 3$, which is the three dimensional cubic-quintic nonlinear Schr\"odinger equation, R. Killip, T. Oh, O. Pocovnicu, and M. Visan \cite{KTPV} analyzed the one-parameter family of ground-state solitons associated to the equation, and proved scattering when the solutions belong to some region of the mass/energy plane where the virial is positive.

In the meantime, there are a lot of work on the study of the stability and instability of the solitary wave of \eqref{gmc}, we refer to \cite{FO,Ma,Oh,OY} and the references therein and also the recent work of A.~Stefanov \cite{St}. We refer to \cite{GLT,Le Coz-Tsai} for the construction of solutions build upon solitons and kinks.

We also refer to the recent work by V. D. Dinh and B. Feng \cite{DF} and the references therein on the fractional nonlinear Schr\"odinger equation with double nonlinearities.

We will first consider the equation \eqref{eq1.1}, it has the conserved quantity: mass, energy, and momentum, defined respectively to be
\begin{align*}
&   \mathcal{M}(u)  = \int_{\mathbb{R}^d} |u(t,x )|^2\,\mathrm{d}x, 
\\
& \mathcal{E}(u)  = \int_{\mathbb{R}^d} \Big(\frac12 |\nabla u(t,x )|^2 - \frac1{p+1} |u(t,x ) |^{p+1} + \frac{d}{2(d+2)} |u(t,x )|^{\frac{2(d+2)}d}\Big) \,\mathrm{d}x, 
\\
& \mathcal{P}(u )  = \mathrm{\Im} \int_{\mathbb{R}^{d}}\nabla u(t,x )\overline{u(t,x)}\,dx. 
\end{align*}
 For $\varphi \in H^1(\mathbb{R}^d)$ and any $ \omega > 0$, we have the Lyapunov functional
\begin{equation*}
\mathcal{S}_\omega (\varphi) = \mathcal{E}(\varphi) + \frac12\omega \mathcal{M}(\varphi),
\end{equation*}
which has the scaling derivative $\mathcal{K}(\varphi)$ defined to be
\begin{equation*}
\begin{split}
\mathcal{K}(\varphi) = \int_{\mathbb{R}^d} |\nabla \varphi|^2 - \frac{d(p-1)}{2(p+1)} |\varphi|^{p+1} + \frac{d}{d+2} |\varphi|^\frac{2(d+2)}d \,\mathrm{d}x.
\end{split}
\end{equation*}
Let
\begin{equation*}
  m_\omega = \inf\left\{ \mathcal{S}_\omega(\varphi): \varphi \in H^1(\mathbb{R}^d)\setminus \{0\}, \mathcal{K} (\varphi) = 0 \right\},
\end{equation*}
by \cite{CMZ}, we have $m_\omega = \mathcal{S}_\omega(Q) > 0$, where $Q\in H^1(\mathbb{R}^d)$ is the ground state of
 \begin{equation} \label{eq2.8}
- \omega Q + \Delta Q + |Q|^{p-1} Q - |Q|^\frac4d Q = 0.
\end{equation}
We now state our result concerning the scattering versus blow-up dichotomy below the ground state energy for \eqref{eq1.1} in the non-radial case.
\begin{theorem}\label{th1.4}
For any $\omega > 0$, \\
 $(i)$ if $u_0 \in A_{\omega,+}$, the solution $u$ to $\eqref{eq1.1}$ exists globally and scatters in $H^1(\mathbb{R}^d)$;\\
 $(ii)$ if $u_0 \in A_{\omega,-}$, the solution $u$ blows up in finite time in the non-radial case if $u_0$ is in the weighed space $\Sigma$ or in the radial case for $d\ge 2, \ p \le \min(5, 1+ \frac4{d-2})$, where
\begin{align*}
& A_{\omega, +} = \left\{ \varphi\in H^1(\mathbb{R}^d): \mathcal{S}_\omega(\varphi) < m_\omega,\, \mathcal{K}(\varphi) \ge 0 \right\},\\
& A_{\omega, -} = \left\{ \varphi\in H^1(\mathbb{R}^d): \mathcal{S}_\omega(\varphi) < m_\omega,\, \mathcal{K}(\varphi) < 0\right\},
\end{align*}
\end{theorem}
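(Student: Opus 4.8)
The plan is to follow the now-standard concentration-compactness/rigidity scheme of Kenig--Merle, adapted to the double-nonlinearity setting as in \cite{Miao-Xu-Zhao1,CMZ}, treating the mass-critical term $|u|^{4/d}u$ as the principal one and the mass-supercritical term $-|u|^{p-1}u$ as a sign-definite lower-order perturbation. First I would record the variational structure: using the definition of $m_\omega$ and the characterization $m_\omega=\mathcal{S}_\omega(Q)>0$ from \cite{CMZ}, one shows that $A_{\omega,+}$ and $A_{\omega,-}$ are each invariant under the flow of \eqref{eq1.1}, since $\mathcal{S}_\omega$ is conserved and the sign of $\mathcal{K}(u(t))$ cannot change without $\mathcal{S}_\omega(u(t))$ reaching $m_\omega$ (a standard continuity-plus-energy-trapping argument). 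On $A_{\omega,+}$ this also yields a coercivity estimate: $\mathcal{K}(u)\ge 0$ together with $\mathcal{S}_\omega(u)<m_\omega$ gives a uniform bound $\|\nabla u(t)\|_{L^2}^2+\omega\|u(t)\|_{L^2}^2\lesssim_\omega 1$, hence uniform control of $u$ in $H^1$, so solutions in $A_{\omega,+}$ are automatically global (the only obstruction to global existence in this subcritical problem is an $H^1$-norm blowup). The remaining content of $(i)$ is the scattering statement.

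For scattering I would argue by contradiction. If scattering fails somewhere in $A_{\omega,+}$, one defines the critical threshold as the infimum of $\mathcal{S}_\omega$ (or of a suitable combination of mass and energy) over data in $A_{\omega,+}$ whose solutions do not scatter, and then extracts a minimal non-scattering solution --- the critical element --- via a linear profile decomposition in $H^1(\mathbb{R}^d)$ together with a perturbation/stability lemma for \eqref{eq1.1}. The profile decomposition must be the Banach-space version adapted to the mass-critical scaling (as developed in \cite{Miao-Xu-Zhao1}, following \cite{Ibrahim-Masmoudi-Nakanishi}), which handles possible scaling and space-translation concentration of the profiles; one then shows that only a single profile can survive, that it must live at unit scale (the mass-critical scaling parameter cannot degenerate, otherwise the supercritical term would be negligible and one would contradict the scattering of the pure mass-critical equation below the ground state, or else the energy would be supercritical), and that the critical element is almost periodic modulo the symmetry group $\mathbb{R}^d$ of spatial translations, with a continuous path $x(t)$. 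Standard arguments then force $x(t)$ to be bounded (sublinear, plus a momentum normalization using conservation of $\mathcal{P}$), so the critical element is in fact compact in $H^1$ up to no modulation at all.

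The final and hardest step is the rigidity argument excluding this critical element. Here I would use a localized virial/Morawetz identity: for the weight $a(x)=|x|^2$ truncated at radius $R$, compute $\frac{d}{dt}\,\Im\int \nabla a\cdot\nabla u\,\bar u\,dx$ and obtain, after using the equation, a quantity controlled from below by $\mathcal{K}(u(t))$ modulo errors supported in $|x|\gtrsim R$; the coercivity from membership in $A_{\omega,+}$ (the sharp fact, proved via the variational characterization of $m_\omega$, that $\mathcal{S}_\omega(u)<m_\omega$ with $\mathcal{K}(u)\ge0$ forces $\mathcal{K}(u)\ge\delta\,\|\nabla u\|_{L^2}^2$ for some $\delta>0$, or at least $\mathcal{K}(u)\ge\delta>0$ when $u\ne0$) then makes the main term strictly positive and bounded below uniformly in $t$; compactness of the critical element's orbit lets one absorb the $R$-tail errors by taking $R$ large, and integrating in $t$ over $[0,T]$ yields a contradiction with the boundedness of the virial quantity as $T\to\infty$. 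For part $(ii)$, the blow-up statement, I would run the dual computation: on $A_{\omega,-}$ the (sign-flipped) coercivity gives $\mathcal{K}(u(t))\le -\delta<0$, so in the weighted space $\Sigma$ the exact virial identity $\frac{d^2}{dt^2}\int|x|^2|u|^2\,dx = 8\mathcal{K}(u(t))\le -8\delta$ forces the variance to become negative in finite time, hence blow-up; in the radial case one replaces this by the truncated virial of Ogawa--Tsutsumi/Holmer--Roudenko, where the restriction $p\le\min(5,1+\tfrac4{d-2})$ is exactly what is needed to control the error terms in the truncated identity. The main obstacle I anticipate is carrying the profile decomposition and the single-profile reduction cleanly through the two competing scalings --- ensuring the surviving profile sits at a non-degenerate mass-critical scale so that the supercritical nonlinearity genuinely contributes --- and establishing the sharp coercivity constant $\delta$ from the variational problem defining $m_\omega$; everything after that is a by-now routine virial rigidity argument.
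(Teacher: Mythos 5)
Your overall scheme is the same as the paper's: variational energy-trapping to show $A_{\omega,\pm}$ are invariant and $A_{\omega,+}$-solutions are global and $H^1$-bounded, a Kenig--Merle concentration-compactness argument producing a critical element that is almost periodic modulo spatial translations with zero momentum and $x(t)=o(t)$, a truncated virial rigidity argument to kill it, and the (truncated) virial identity for part (ii). Two points, however, deserve attention, the first of which is a genuine gap in your sketch.

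The heart of the paper is precisely the step you dispose of in a parenthesis: the claim that the surviving profile ``must live at unit scale.'' Because the dominant nonlinearity here is mass-critical, the natural profile decomposition is the $L^2$-based one, whose profiles come with scaling parameters $h_n^j\to\infty$ and carry no uniform $H^1$ control after rescaling; they cannot be ruled out by an energy-supercriticality argument, and the citation you lean on (\cite{Miao-Xu-Zhao1}, following \cite{Ibrahim-Masmoudi-Nakanishi}) concerns the energy-critical ($\dot H^1$-scaling) setting, which does not apply. The paper instead (a) weakens the scattering criterion to finiteness of $\|\langle\nabla\rangle^{s_p}u\|_{L_{t,x}^{2(d+2)/d}}$ with $s_p=\frac d2-\frac2{p-1}\in(0,1)$, so that an $L^2$-based profile decomposition with frequency-truncated profiles $P_{\le (h_n^j)^\theta}\varphi^j$ suffices, builds the stability theory at regularity $s_p$, and (b) proves a nontrivial approximation theorem showing that each large-scale nonlinear profile is well approximated by a rescaled solution of the \emph{defocusing} mass-critical NLS, whose global space-time bounds come from Dodson's defocusing scattering theorem --- not from the focusing ``below the ground state'' result you invoke, which is irrelevant for \eqref{eq1.1} since there the mass-critical term has the defocusing sign (it is needed only for Theorem \ref{th1.3v16}). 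Without this framework your single-profile reduction does not close. A second, more minor, overreach: the momentum normalization yields only $x(t)=o(t)$ as $t\to\pm\infty$, not boundedness of $x(t)$; the paper's rigidity argument is run with $R=R_0+\eta t_1$ over a long window $[t_0,t_1]$ exactly because sublinearity is all one gets, and it is all one needs.
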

We now turn to the equation \eqref{eq1}, it turns out to be the famous two-dimensional cubic-quintic nonlinear Schr\"odinger equation \cite{F} when $d =2$ and $p = 5$. It is known that \eqref{eq1} is global wellposedness and scatters for small data, see \cite{Tao-Visan-Zhang}. However, we do not know the long time behavior of the solution for large data. Now by using the sharp Gagliardo-Nirenberg inequality in our proof, we can give the large data scattering result of \eqref{eq1} by using the argument of Theorem \ref{th1.4}, and also use the scattering result of the mass-critical focusing nonlinear Schr\"odinger equation in \cite{D4}. We now give our result on the scattering for \eqref{eq1} when the mass is less than the mass of the ground state solution of \eqref{eq1.10v30}.
\begin{theorem}\label{th1.3v16}
If $u_0\in H^1(\mathbb{R}^d)$, and $\left\|u_0\right\|_{L^2(\mathbb{R}^d)} < \left\|Q\right\|_{L^2(\mathbb{R}^d)}$, the solution $u$ to \eqref{eq1} exists globally and scatters in $H^1(\mathbb{R}^d)$, where $Q$ is the ground state of
\begin{align}\label{eq1.10v30}
- Q + \Delta Q + |Q|^\frac4d Q = 0.
\end{align}
\end{theorem}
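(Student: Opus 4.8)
The plan is to prove Theorem~\ref{th1.3v16} by the concentration--compactness/rigidity method, following the same scheme that proves Theorem~\ref{th1.4}, with the sharp Gagliardo--Nirenberg inequality playing the role that the variational characterization of $m_\omega$ plays there; recall that for $Q$ the ground state of \eqref{eq1.10v30} one has the optimal bound
\[
\|f\|_{L^{\frac{2(d+2)}{d}}(\mathbb{R}^d)}^{\frac{2(d+2)}{d}}\le \frac{d+2}{d}\Big(\tfrac{\|f\|_{L^2}}{\|Q\|_{L^2}}\Big)^{\frac4d}\|\nabla f\|_{L^2}^2 .
\]
\emph{Global bounds and the threshold.} Since the mass is conserved and $\|u_0\|_{L^2}<\|Q\|_{L^2}$, put $\delta_0:=1-\big(\|u_0\|_{L^2}/\|Q\|_{L^2}\big)^{4/d}>0$. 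Substituting the sharp inequality into the energy of \eqref{eq1} gives, for all $t$ in the interval of existence,
\[
\tfrac{\delta_0}{2}\,\|\nabla u(t)\|_{L^2}^2+\tfrac{1}{p+1}\,\|u(t)\|_{L^{p+1}}^{p+1}\le \mathcal{E}(u_0),
\]
so $u$ extends globally and $\|u(t)\|_{H^1}$ is bounded uniformly in $t$; in particular $\mathcal{E}\ge0$ on this class, and $\mathcal{E}$ controls $\|\nabla\cdot\|_{L^2}^2$ with a constant depending only on the mass. The small-data theory and the $H^1$ stability/perturbation lemmas for \eqref{eq1} (both nonlinearities are energy-subcritical) reduce scattering for a given datum to the finiteness of a global space--time scattering norm.

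\emph{Construction of a critical element.} If Theorem~\ref{th1.3v16} failed one would, by the standard argument, extract a minimal non-scattering solution $u_c$ of \eqref{eq1} whose orbit is precompact in $H^1$ modulo the symmetry group. This uses the linear profile decomposition adapted to \eqref{eq1}, retaining spatial translations, time translations and a scaling parameter; mass and energy decouple over the profiles, and each profile carries mass $<\|Q\|_{L^2}$, hence nonnegative energy. A nonlinear profile whose scale stays bounded solves \eqref{eq1}, and unless it is the only profile it has strictly smaller size, so it scatters by the inductive hypothesis. A nonlinear profile whose scale degenerates is, after rescaling by the mass-invariant ($L^2$-critical) scaling, governed in the limit by the mass-critical focusing equation $i\partial_t v+\Delta v=-|v|^{\frac4d}v$ — the mass-supercritical contribution becomes negligible in this limit — and since its mass is $<\|Q\|_{L^2}$ it scatters by Dodson's theorem~\cite{D4}. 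Feeding all these scattering profiles into the nonlinear superposition argument and invoking stability, a non-scattering sequence must collapse to a single bounded-scale profile with an asymptotically negligible remainder (vanishing energy forces it to vanish in $H^1$ by the coercivity above), producing $u_c$. Running the argument at every time yields almost periodicity; moreover $\|\nabla u_c(t)\|_{L^2}$ is bounded above by the energy and bounded below away from $0$ (otherwise $\mathcal{E}(u_c)=0$ and coercivity gives $u_c\equiv0$), so the scale may be normalized and $u_c$ is almost periodic modulo translations alone.

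\emph{Rigidity.} Use the Galilean transformation — which preserves $\|u_c\|_{L^2}$, hence keeps us below $\|Q\|_{L^2}$ — to normalize $\mathcal{P}(u_c)=0$; precompactness then forces the spatial center to satisfy $x(t)=o(|t|)$. Combining the virial identity for \eqref{eq1} with the sharp Gagliardo--Nirenberg inequality gives the coercive lower bound
\[
\frac{\mathrm{d}^2}{\mathrm{d}t^2}\int |x|^2|u_c(t,x)|^2\,\mathrm{d}x \;\ge\; 8\delta_0\,\|\nabla u_c(t)\|_{L^2}^2+\frac{4d(p-1)}{p+1}\,\|u_c(t)\|_{L^{p+1}}^{p+1}\;\gtrsim\;\delta_0>0 ,
\]
uniformly in $t$ (using $\|\nabla u_c(t)\|_{L^2}\gtrsim1$). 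A standard truncated-virial argument — controlling the cutoff errors on a time window by precompactness and $x(t)=o(|t|)$, and comparing with $|z_R'(t)|\lesssim R$ for $z_R(t)=\int\chi(x/R)|x|^2|u_c|^2\,\mathrm{d}x$ — then yields a contradiction, exactly as in the proof of Theorem~\ref{th1.4}. Hence no critical element exists and Theorem~\ref{th1.3v16} follows.

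\emph{Where the difficulty lies.} The first and third steps are essentially routine here: the sharp Gagliardo--Nirenberg inequality makes both the a priori $H^1$ bound and the strict convexity of the virial automatic, and the localized-virial rigidity is standard once the momentum has been normalized. The technical heart is the construction of the critical element — setting up a profile decomposition and a perturbation theory robust enough to treat the two nonlinearities (one $L^2$-critical, one mass-supercritical and energy-subcritical) simultaneously and across all scales, verifying that the scale-degenerating profiles are genuinely governed by $i\partial_t v+\Delta v=-|v|^{\frac4d}v$ so that \cite{D4} applies, and closing the nonlinear superposition estimate. This is precisely the part that parallels, and requires adapting, the proof of Theorem~\ref{th1.4}.
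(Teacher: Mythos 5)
Your proposal is correct and follows essentially the same route as the paper's own (sketched) proof: coercivity of the energy via the sharp Gagliardo--Nirenberg inequality below the mass threshold, an induction on the mass with a critical element extracted through the $H^1$ profile decomposition, approximation of the scale-degenerating profiles by the focusing mass-critical NLS so that Dodson's theorem applies, and extinction of the compact (zero-momentum, $x(t)=o(t)$) critical element by the localized virial identity combined again with the sharp Gagliardo--Nirenberg bound. The only cosmetic difference is that you normalize the scale of the critical element a posteriori via the lower bound on $\|\nabla u_c(t)\|_{L^2}$, whereas the paper excludes the shrinking scales already at the level of the $H^1$ linear profile decomposition; both lead to compactness modulo spatial translations alone.
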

\begin{remark}
$\left\|Q\right\|_{L^2(\mathbb{R}^d)}$ is the exact threshold for scattering, see \cite{Le Coz-Tsai}, solution with the soliton as the main part can be constructed, which does not scatter can occur for \eqref{eq1} if $\left\|u_0\right\|_{L^2(\mathbb{R}^d)} >  \left\|Q\right\|_{L^2(\mathbb{R}^d)}$.
\end{remark}
\begin{remark}
 Most of the argument in the proof of this theorem is similar to the proof of Theorem \ref{th1.4}, we will give a sketch of the proof in Section \ref{se8v19}.
\end{remark}
We describe now some of the ideas involved in the proof, and mainly the proof of Theorem \ref{th1.4}. To deal with the nonlinear Schr\"odinger equation with double nonlinearities, we focus on the different roles played by the two nonlinearities, it is a little difficult to figure out which term in the nonlinearities dominated the long time behaviour of the solution. Generally speaking, the focusing term will greatly affect the behaviour of the solution, since it can cause finite time blow-up, but on the other hand the defocusing term can preclude the blow-up in some sense. By the variational computation made in \cite{CMZ}, we can see under the assumption that the Lyapunov functional of the initial data is less than the Lyapunov functional of the ground state solution, we can give a global wellposedness versus blow-up dichotomy by using the scaling functional $\mathcal{K}$. This reveals the interesting fact that under suitable condition on the initial data, the focusing term will dominated the defocusing term. Therefore, the remaining difficult part is to prove the scattering when the solution globally exists, we prove the scattering result by the compactness-contradiction method initiated by C. E. Kenig and F. Merle \cite{Kenig-Merle,KM2}. To prove scattering, it is a little involved because the defocusing term is mass-critical, and we need to show the solution has a finite space-time norm $L_t^\frac{2(d+2)}d W_x^{1, \frac{2(d+2)}d}(\mathbb{R}\times \mathbb{R}^d)$, but since our nonlinearities are energy-subcritical, we find a weaker finite space-time norm $L_t^\frac{2(d+2)}d W_x^{s_p, \frac{2(d+2)}d}(\mathbb{R}\times \mathbb{R}^d)$, where $s_p = \frac{d}2- \frac2{p-1} \in (0, 1)$, is enough to yield scattering. As a result, we only need to establish a linear profile decomposition in $H^1$ with the remainder asymptotically vanishes in $L_t^\frac{2(d+2)}d W_{x}^{s_p, \frac{2(d+2)}d}(\mathbb{R}\times \mathbb{R}^d)$, which is equivalent to describe the lack of compactness of the embedding $e^{it\Delta}: H^1_x( \mathbb{R}^d) \hookrightarrow L_t^\frac{2(d+2)}d W_{x}^{s_p, \frac{2(d+2)}d}(\mathbb{R}\times \mathbb{R}^d) $. Then as in \cite{CGZ}, we can directly apply the linear profile decomposition of the Schr\"odinger equations in $L^2$, but keep our eyes open that the initial data is in $H^1$, we can exclude one direction of the scaling limit.
After making the contradiction that the solution does not scatter, we can find a sequence of solutions $u_n: \mathbb{R} \times \mathbb{R}^d \to \mathbb{C}$ with
$u_n \in A_{\omega, +}$, and
\begin{align*}
&  \ \mathcal{S}_\omega(u_n) \to m_\omega^*, \text{ as } n\to \infty,\\
& \lim\limits_{n\to \infty} \left\| \langle \nabla \rangle^{s_p} u_n \right\|_{L_{t,x}^\frac{2(d+2)}d([0, \infty) \times \mathbb{R}^d)}
= \lim\limits_{n\to \infty} \left\| \langle \nabla \rangle^{s_p} u_n \right\|_{L_{t,x}^\frac{2(d+2)}d((- \infty, 0] \times \mathbb{R}^d)} = \infty,
\end{align*}
where $m_\omega^*$ to be the smallest number for which such a sequence exists, we can applying the linear profile decomposition to the minimizing sequence $u_n(0,x)$, then we need the analyze the nonlinear profiles related to the large-scale case, that is the solution of
\begin{align*}
\begin{cases}
i\partial_t u_n + \Delta u_n = |u_n|^\frac4d u_n - |u_n|^{p-1} u_n, \\
u_n(0,x) = e^{i\theta_n } e^{ix\cdot\xi_n } e^{-it_n \Delta} \left(\frac1{(h_n)^\frac{d}2 } \left( P_{\le h_n^\theta} \varphi\right)  \left(\frac{\cdot - x_n }{h_n}\right)\right)(x),
\end{cases}
\end{align*}
 where $\varphi \in L^2(\mathbb{R}^d)$, $( \theta_n, h_n, t_n, x_n, \xi_n)  \subseteq  \mathbb{R}/2\pi\mathbb{Z} \times (0,\infty)\times  \mathbb{R} \times \mathbb{R}^d \times \mathbb{R}^d$, $| \xi_n | \le C $, $h_n \to \infty$, as $n\to \infty$, and $0 < \theta < 1$, we find that we can approximate this kind of nonlinear profile by using a transformation of the solution of the mass-critical nonlinear Schr\"odinger equation:
\begin{align*}
\begin{cases}
i\partial_t v + \Delta v = |v|^\frac4d v,\\
v(0,x) = v_0(x),
\end{cases}
\end{align*}
where $v_0\in H^1$ close to $\varphi$ in $L^2$, the solution $v$ has finite scattering norm by the scattering theorem of the mass-critical nonlinear Schr\"odinger equation, we can get the solution $u_n$ has finite scattering norm. Finally, we obtain a critical element $u_c \in  C(\mathbb{R}, H^1(\mathbb{R}^d))$, which is almost periodic modulo spatial translation. The spatial translation parameter $x(t)$ obeys $x(t) = o(t)$, as $t \to \pm \infty$ as a consequence of the zero momentum property of the critical element as \cite{Duyckaerts-Holmer-Roudenko,KM2,KTPV}. Then by using the localized virial quantity, we can exclude the critical element, which then reveals the scattering result. On the other hand, for the proof of Theorem \ref{th1.3v16}, most of the argument in the proof is same as in the proof of Theorem \ref{th1.4}, and the main difference is that we use the sharp Gagliardo-Nirenberg inequality instead of the variational estimates from \cite{CMZ}.

We expect our result can be extended to higher dimensions $d\ge 5$, since the only obstacle is the stability theory. However, it seems difficult to control the mass-critical term in the Sobolev spaces $H^1$ or the weaker space $H^{s_p}$ even by using exotic Strichartz estimates in \cite{Foschi, Vilela} when we are trying to establish the stability theory.

The rest of the paper is organized as follows. After introducing some notations and preliminaries, we recall the variational estimate related to \eqref{eq2.8} from \cite{CMZ} in Section \ref{se2v9} and we also give the proof of the blow-up part of Theorem \ref{th1.4} in the non-radial case. The local wellposedness and stability theory are stated in Section \ref{se4v24}. In Section \ref{se5v24}, we derive the linear profile decomposition for data in $H^1(\mathbb{R}^d)$. Then we argue by contradiction. We reduce to the existence of a critical element in Section \ref{se6v24} and show the extinction of such a critical element in Section \ref{se5}. We give a sketch of the proof of Theorem \ref{th1.3v16} in Section \ref{se8v19}.
\subsection{Notation and Preliminaries}
We will use the notation $X\lesssim Y$ whenever there exists some positive constant $C$ so that $X\le C Y$. Similarly, we will use $X\sim Y$ if $X\lesssim Y \lesssim X$.

We define the Fourier transform on $\mathbb{R}^d$ to be
\begin{align*}
\hat{f}(\xi) =\frac1{(2\pi)^d} \int_{\mathbb{R}^d} e^{- ix\xi} f(x)\,\mathrm{d}x,
\end{align*}
 and for $s \in \mathbb{R}$,
the fractional differential operators $|\nabla|^s$ is defined by $\widehat{|\nabla|^s f}(\xi)  = |\xi|^s \hat{f}(\xi).$ We also define
 $\langle \nabla\rangle^s$ by $\widehat{\langle \nabla\rangle^s f}(\xi)=  (  1 + |\xi|^s )  \hat{f}(\xi)$.

We define the homogeneous and inhomogeneous Sobolev norms by
 \[ \left\|f\right\|_{\dot{H}^s(\mathbb{R}^d)} = \left\| |\nabla|^s f\right\|_{L^2(\mathbb{R}^d)}, \
\left\|f\right\|_{H^s(\mathbb{R}^d)} = \left\|\langle \nabla\rangle^s f\right\|_{L^2(\mathbb{R}^d)}.\]

For $I \subseteq \mathbb{R}$, we use $L_t^q L_x^r(I \times \mathbb{R}^d)$ to denote the spacetime norm
$$\left\|u\right\|_{L_t^q L_x^r(I \times \mathbb{R}^d)} =\bigg(\int_I \Big(\int_{\mathbb{R}^d} |u(t,x)|^r \,\mathrm{d}x\Big)^\frac{q}{r}\,\mathrm{d}t\bigg)^\frac1 q.$$
When $q = r$, we abbreviate $L_t^q L_x^r$ as $L_{t,x}^q$.

 We say that a pair of exponents $(q,r)$ is $L^2$-\emph{admissible} if $\frac{2}{q} +
\tfrac{d}{r} = \frac{d}{2}$ and $2 \leq q,r \leq \infty, \ (q,r,d) \ne (2, \infty, 2), \ d\ge 1$.
If $I \times \mathbb{R}^d$ is a space-time slab, for any $ 0 \le s \le 1$, we define the \emph{Strichartz norm} ${S}^s(I)$ by
\begin{equation*}
\|u\|_{{S}^s(I) } = \sup \|\langle \nabla \rangle^{s} u \|_{L_t^q L_x^r(I\times \mathbb{R}^d)},
\end{equation*}
where the $\sup$ is taken over all $L^2$-admissible pairs $(q,r)$ with $ q < \infty$.
When $d = 2$, we need to modify the norm a little, where the $\sup$ is taken over all $L^2$-admissible pairs with $q \ge 2 + \epsilon$,
 for $\epsilon > 0$ arbitrary small.

\section{Variational estimates}\label{se2v9}
In this section, we present the facts related to the ground state in \eqref{eq2.8}, which are proven in \cite{CMZ}. We also give the energy-trapping property for $A_{\omega, \pm}$.
In the end of this section, we give the proof of the blow-up part of Theorem \ref{th1.4} by using the energy-trapping for $A_{\omega, -}$.

Due to the lack of positivity of $\mathcal{S}_\omega(\varphi)$, we introduce a positive functional
\begin{equation}\label{eq2.3new}
\begin{split}
\mathcal{H}_\omega(\varphi) := \mathcal{S}_\omega(\varphi) - \frac12 \mathcal{K}(\varphi)
                  = \frac{\omega}2 \|\varphi\|_{L^2}^2 + \frac{d(p-1)-4}{4(p+1)} \|\varphi\|_{L^{p+1}}^{p+1}.
\end{split}
\end{equation}
Then we have an equivalent variational characterization of $m_\omega$.
\begin{proposition}[\cite{CMZ}] \label{le2.4}
\begin{equation}\label{eq2.4}
\begin{split}
m_\omega  = \inf \left\{ \mathcal{H}_\omega(\varphi): \varphi \in H^1\setminus \{0\}, \mathcal{K}(\varphi) \le 0\right\} .
\end{split}
\end{equation}
\end{proposition}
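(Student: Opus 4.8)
The plan is to set $\widetilde m_\omega := \inf\{\mathcal{H}_\omega(\varphi): \varphi\in H^1\setminus\{0\},\ \mathcal{K}(\varphi)\le 0\}$ and establish $\widetilde m_\omega\le m_\omega$ and $\widetilde m_\omega\ge m_\omega$ separately. The first inequality is immediate: on $\{\mathcal{K}(\varphi)=0\}$ one has $\mathcal{H}_\omega(\varphi)=\mathcal{S}_\omega(\varphi)$ by \eqref{eq2.3new}, and since $\{\mathcal{K}=0\}\subseteq\{\mathcal{K}\le 0\}$ it follows that $\widetilde m_\omega\le\inf_{\mathcal{K}(\varphi)=0}\mathcal{H}_\omega(\varphi)=\inf_{\mathcal{K}(\varphi)=0}\mathcal{S}_\omega(\varphi)=m_\omega$. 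The real content is the reverse inequality, namely $\mathcal{H}_\omega(\varphi)\ge m_\omega$ for every nonzero $\varphi$ with $\mathcal{K}(\varphi)\le 0$.

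To prove this I would use the $L^2$-invariant rescaling $\varphi_\lambda(x):=\lambda^{d/2}\varphi(\lambda x)$, $\lambda>0$, under which $\|\varphi_\lambda\|_{L^2}=\|\varphi\|_{L^2}$, $\|\nabla\varphi_\lambda\|_{L^2}^2=\lambda^2\|\nabla\varphi\|_{L^2}^2$, $\|\varphi_\lambda\|_{L^{p+1}}^{p+1}=\lambda^{\frac{d(p-1)}{2}}\|\varphi\|_{L^{p+1}}^{p+1}$, and $\|\varphi_\lambda\|_{L^{2(d+2)/d}}^{2(d+2)/d}=\lambda^{2}\|\varphi\|_{L^{2(d+2)/d}}^{2(d+2)/d}$. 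Consequently
\[
\mathcal{K}(\varphi_\lambda)=\lambda^2\Big(\|\nabla\varphi\|_{L^2}^2+\tfrac{d}{d+2}\|\varphi\|_{L^{2(d+2)/d}}^{2(d+2)/d}\Big)-\tfrac{d(p-1)}{2(p+1)}\,\lambda^{\frac{d(p-1)}{2}}\|\varphi\|_{L^{p+1}}^{p+1},
\]
while \eqref{eq2.3new} gives
\[
\mathcal{H}_\omega(\varphi_\lambda)=\tfrac{\omega}{2}\|\varphi\|_{L^2}^2+\tfrac{d(p-1)-4}{4(p+1)}\,\lambda^{\frac{d(p-1)}{2}}\|\varphi\|_{L^{p+1}}^{p+1}.
\]
Since $p>1+\tfrac4d$ forces $d(p-1)-4>0$ and $\tfrac{d(p-1)}{2}>2$, the map $\lambda\mapsto\mathcal{H}_\omega(\varphi_\lambda)$ is strictly increasing on $(0,\infty)$, and $\lambda^{-2}\mathcal{K}(\varphi_\lambda)\to\|\nabla\varphi\|_{L^2}^2+\tfrac{d}{d+2}\|\varphi\|_{L^{2(d+2)/d}}^{2(d+2)/d}>0$ as $\lambda\to0^+$, so $\mathcal{K}(\varphi_\lambda)>0$ for all sufficiently small $\lambda$.

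Then, given a nonzero $\varphi$ with $\mathcal{K}(\varphi)=\mathcal{K}(\varphi_1)\le0$, the intermediate value theorem applied to the continuous map $\lambda\mapsto\mathcal{K}(\varphi_\lambda)$ on $(0,1]$ yields some $\lambda_0\in(0,1]$ with $\mathcal{K}(\varphi_{\lambda_0})=0$; as $\varphi_{\lambda_0}\ne0$ it is admissible in the problem defining $m_\omega$, so by \eqref{eq2.3new}
\[
m_\omega\le\mathcal{S}_\omega(\varphi_{\lambda_0})=\mathcal{H}_\omega(\varphi_{\lambda_0})+\tfrac12\mathcal{K}(\varphi_{\lambda_0})=\mathcal{H}_\omega(\varphi_{\lambda_0})\le\mathcal{H}_\omega(\varphi_1)=\mathcal{H}_\omega(\varphi),
\]
where the last inequality combines the monotonicity just noted with $\lambda_0\le1$. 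Taking the infimum over all such $\varphi$ gives $m_\omega\le\widetilde m_\omega$, completing the proof. The step demanding care is the sign of $\mathcal{K}(\varphi_\lambda)$ near $\lambda=0$: it is precisely the mass-supercriticality $d(p-1)>4$ of the focusing term that renders the $L^{p+1}$ contribution subordinate to the kinetic and mass-critical terms at small scales, so that the rescaled function can be trapped onto the manifold $\{\mathcal{K}=0\}$; without this the argument would fail. One also tacitly uses, from \cite{CMZ}, that $m_\omega=\mathcal{S}_\omega(Q)$ is finite and positive.
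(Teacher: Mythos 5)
Your argument is correct. The paper itself gives no proof of this proposition (it is quoted from \cite{CMZ}), and your proof is exactly the standard variational argument used there: rescale along the $L^2$-invariant (mass-preserving) family $\varphi_\lambda=\lambda^{d/2}\varphi(\lambda\cdot)$, use mass-supercriticality $d(p-1)>4$ to see that $\mathcal{K}(\varphi_\lambda)>0$ near $\lambda=0$ and that $\mathcal{H}_\omega(\varphi_\lambda)$ is nondecreasing in $\lambda$, and trap the rescaled function on $\{\mathcal{K}=0\}$; all the scaling exponents and sign conditions you use check out.
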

We also have
\begin{lemma}[\cite{CMZ}] \label{le2.6}
For any $ \varphi \in H^1(\mathbb{R}^d)$ with $\mathcal{K}(\varphi) \ge 0$, we have
\begin{equation*}
\begin{split}
\mathcal{E}(\varphi)  \sim  \int_{\mathbb R^d} \frac12|\nabla \varphi|^2 + \frac{d}{2(d+2)}
|\varphi|^\frac{2(d+2)}d \,\mathrm{d}x.
\end{split}
\end{equation*}
\end{lemma}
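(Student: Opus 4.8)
The plan is to show that, on the set $\{\mathcal{K}(\varphi)\ge 0\}$, the potential term $-\frac1{p+1}\|\varphi\|_{L^{p+1}}^{p+1}$ appearing in $\mathcal{E}(\varphi)$ cannot overwhelm the two positive terms $\frac12\|\nabla\varphi\|_{L^2}^2+\frac{d}{2(d+2)}\|\varphi\|_{L^{2(d+2)/d}}^{2(d+2)/d}$. The upper bound $\mathcal{E}(\varphi)\lesssim \int \frac12|\nabla\varphi|^2+\frac{d}{2(d+2)}|\varphi|^{2(d+2)/d}\,\mathrm dx$ is the substantive direction and the lower bound is essentially immediate. For the lower bound, observe that $\mathcal{H}_\omega(\varphi)=\mathcal{S}_\omega(\varphi)-\frac12\mathcal{K}(\varphi)\ge 0$ by \eqref{eq2.3new}, and $\mathcal{S}_\omega(\varphi)=\mathcal{E}(\varphi)+\frac\omega2\|\varphi\|_{L^2}^2$; combining with $\mathcal{K}(\varphi)\ge 0$ gives a clean two-sided comparison once we massage the Lebesgue norms appropriately. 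So I would concentrate on producing $\mathcal{E}(\varphi)\lesssim \frac12\|\nabla\varphi\|_{L^2}^2+\frac{d}{2(d+2)}\|\varphi\|_{L^{2(d+2)/d}}^{2(d+2)/d}$.

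First I would use the constraint $\mathcal{K}(\varphi)\ge 0$, i.e.
\[
\frac{d(p-1)}{2(p+1)}\|\varphi\|_{L^{p+1}}^{p+1}\le \|\nabla\varphi\|_{L^2}^2+\frac{d}{d+2}\|\varphi\|_{L^{2(d+2)/d}}^{2(d+2)/d},
\]
to bound the $L^{p+1}$ term by the other two. Substituting this into the definition of $\mathcal{E}(\varphi)$ immediately yields
\[
\mathcal{E}(\varphi)\le \Big(\tfrac12+\tfrac{1}{d(p-1)}\Big)\|\nabla\varphi\|_{L^2}^2+\Big(\tfrac{d}{2(d+2)}+\tfrac{2}{(d+2)(p-1)}\Big)\|\varphi\|_{L^{2(d+2)/d}}^{2(d+2)/d},
\]
using $\frac1{p+1}\cdot\frac{2(p+1)}{d(p-1)}=\frac2{d(p-1)}$ and $\frac1{p+1}\cdot\frac{2(p+1)}{d(p-1)}\cdot\frac{d}{d+2}=\frac2{(d+2)(p-1)}$ for the two coefficients. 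Since $p>1+\frac4d$ (so $d(p-1)>4$), both coefficients are finite and bounded by an absolute constant depending only on $d,p$; this is exactly the claimed $\lesssim$. For the reverse inequality I would note that $\mathcal{K}(\varphi)\ge 0$ together with \eqref{eq2.3new} forces $\frac12\|\nabla\varphi\|_{L^2}^2+\frac{d}{2(d+2)}\|\varphi\|_{L^{2(d+2)/d}}^{2(d+2)/d}-\frac1{p+1}\|\varphi\|_{L^{p+1}}^{p+1}=\mathcal{E}(\varphi)$ and, using the $\mathcal{K}\ge0$ bound on $\|\varphi\|_{L^{p+1}}^{p+1}$ one more time in the form $\frac1{p+1}\|\varphi\|_{L^{p+1}}^{p+1}\le \frac{2}{d(p-1)}\big(\|\nabla\varphi\|_{L^2}^2+\frac d{d+2}\|\varphi\|_{L^{2(d+2)/d}}^{2(d+2)/d}\big)$, gives a lower bound of the form $\mathcal{E}(\varphi)\ge c\big(\frac12\|\nabla\varphi\|_{L^2}^2+\frac d{2(d+2)}\|\varphi\|_{L^{2(d+2)/d}}^{2(d+2)/d}\big)$ provided $c\le 1-\frac{4}{d(p-1)}$; positivity of this constant is again guaranteed by $d(p-1)>4$.

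The only genuine subtlety — and the place I would be most careful — is that the constant $1-\frac{4}{d(p-1)}$ in the lower bound degenerates to $0$ as $p\downarrow 1+\frac4d$, so the equivalence constants blow up near the mass-critical threshold; one should double-check that the hypothesis in the excerpt excludes $p=1+\frac4d$ (it does: the standing assumption is $1+\frac4d<p$), so the constants are harmless but $p$-dependent. I expect this is exactly why the authors cite \cite{CMZ} rather than reprove it: the computation is short but one must keep track of which direction uses $\mathcal{K}\ge 0$ and verify the resulting coefficients stay positive and finite under the running assumptions on $(d,p)$.
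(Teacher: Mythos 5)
Your proof is correct and, since the paper itself gives no argument for this lemma but only cites \cite{CMZ}, it supplies exactly the computation that reference contains: use $\mathcal{K}(\varphi)\ge 0$ to bound $\frac{1}{p+1}\|\varphi\|_{L^{p+1}}^{p+1}$ by $\frac{2}{d(p-1)}\bigl(\|\nabla\varphi\|_{L^2}^2+\frac{d}{d+2}\|\varphi\|_{L^{2(d+2)/d}}^{2(d+2)/d}\bigr)$ and observe that the resulting constant $1-\frac{4}{d(p-1)}$ is positive precisely because $p>1+\frac4d$. One correction to your framing: you have the two directions backwards in the opening paragraph --- the upper bound $\mathcal{E}(\varphi)\le\frac12\|\nabla\varphi\|_{L^2}^2+\frac{d}{2(d+2)}\|\varphi\|_{L^{2(d+2)/d}}^{2(d+2)/d}$ is the trivial one (one subtracts a nonnegative term, so it holds with constant $1$ and needs no hypothesis, making your coefficient bookkeeping there superfluous), whereas the lower bound is the substantive direction requiring $\mathcal{K}(\varphi)\ge 0$; the detour through $\mathcal{H}_\omega$ is likewise unnecessary, and your final displayed estimate, which uses only $\mathcal{K}(\varphi)\ge 0$, already does all the work.
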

By a simple computation, we have $\forall\, \omega > 0$, $A_{\omega,+}$ is bounded in $H^1(\mathbb{R}^d)$.
\begin{lemma}\label{le2.8}
 Let $\omega > 0$ and $u \in A_{\omega,+}$, then we have
\begin{equation*} 
\left\|u\right\|_{H^1}^2 \lesssim m_\omega + \frac{m_\omega}\omega.
\end{equation*}
\end{lemma}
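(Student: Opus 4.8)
The plan is to bound $\|u\|_{H^1}^2 = \|u\|_{\dot H^1}^2 + \|u\|_{L^2}^2$ by combining the hypothesis $\mathcal{S}_\omega(u) < m_\omega$ with the sign condition $\mathcal{K}(u) \ge 0$, using the two functionals $\mathcal{H}_\omega$ and $\mathcal{E}$ introduced above. First I would recall from \eqref{eq2.3new} that $\mathcal{S}_\omega(u) = \mathcal{H}_\omega(u) + \frac12 \mathcal{K}(u)$; since $\mathcal{K}(u)\ge 0$ and $\mathcal{H}_\omega(u) = \frac{\omega}{2}\|u\|_{L^2}^2 + \frac{d(p-1)-4}{4(p+1)}\|u\|_{L^{p+1}}^{p+1}$ has both terms nonnegative (here $d(p-1)-4 > 0$ because $p > 1 + \tfrac4d$), we get immediately
\[
\frac{\omega}{2}\|u\|_{L^2}^2 \le \mathcal{H}_\omega(u) \le \mathcal{H}_\omega(u) + \frac12\mathcal{K}(u) = \mathcal{S}_\omega(u) < m_\omega,
\]
so $\|u\|_{L^2}^2 \lesssim m_\omega/\omega$. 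That takes care of the $L^2$ part.

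For the $\dot H^1$ part I would use Lemma \ref{le2.6}: since $\mathcal{K}(u)\ge 0$, we have $\mathcal{E}(u) \sim \int \frac12|\nabla u|^2 + \frac{d}{2(d+2)}|u|^{\frac{2(d+2)}{d}}\,\mathrm{d}x$, and in particular $\|u\|_{\dot H^1}^2 \lesssim \mathcal{E}(u)$. Now $\mathcal{E}(u) = \mathcal{S}_\omega(u) - \frac12\omega\mathcal{M}(u) \le \mathcal{S}_\omega(u) < m_\omega$, using that $\mathcal{M}(u)\ge 0$. Hence $\|u\|_{\dot H^1}^2 \lesssim m_\omega$. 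Adding the two estimates gives $\|u\|_{H^1}^2 \lesssim m_\omega + m_\omega/\omega$, as claimed.

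There is really no serious obstacle here; the lemma is a routine consequence of the variational structure already established in \cite{CMZ} and recalled above. The only points that need a word of care are: (a) checking the coefficient $d(p-1)-4$ is strictly positive under the standing hypothesis $p > 1+\tfrac4d$, so that $\mathcal{H}_\omega$ is genuinely a sum of nonnegative terms and the first chain of inequalities is valid; and (b) invoking Lemma \ref{le2.6} legitimately, which requires exactly the hypothesis $\mathcal{K}(u)\ge 0$ that membership in $A_{\omega,+}$ provides. Since both the $L^2$ and $\dot H^1$ bounds are controlled by $\mathcal{S}_\omega(u) < m_\omega$ (divided by $\omega$ in the mass case), the stated inequality $\|u\|_{H^1}^2 \lesssim m_\omega + \frac{m_\omega}{\omega}$ follows directly.
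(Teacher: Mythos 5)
Your proof is correct and is exactly the ``simple computation'' the paper alludes to (the paper omits the proof): the $L^2$ part comes from $\frac{\omega}{2}\|u\|_{L^2}^2 \le \mathcal{H}_\omega(u) \le \mathcal{S}_\omega(u) < m_\omega$ using $\mathcal{K}(u)\ge 0$ and the positivity of $d(p-1)-4$, and the $\dot H^1$ part from Lemma \ref{le2.6} together with $\mathcal{E}(u)\le \mathcal{S}_\omega(u) < m_\omega$. Nothing to add.
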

Next we give the energy-trapping properties of $A_{\omega,\pm}$.
\begin{proposition}[Energy-trapping for $A_{\omega,+}$, \cite{CMZ}]\label{le2.9}
For $u_0\in A_{\omega,+}$, let $u$ be the solution of \eqref{eq1.1}, there exists some positive constant $\delta  = \delta(d,p,\omega)$ such that for $t \in \mathbb{R}$,
\begin{equation*} 
\mathcal{K}(u(t)) \ge \min\left\{\frac{d(p-1)-4}{d(p-1)}\Big( \left\|\nabla u(t)\right\|_{L^2}^2 + \frac{d}{d+2} \left\|u(t)\right\|_{L_x^\frac{2(d+2)}d}^\frac{2(d+2)}d\Big), \delta
\Big(m_\omega - \mathcal{S}_\omega(u(t))\Big)\right\}.
\end{equation*}
\end{proposition}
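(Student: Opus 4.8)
The plan is to prove the energy-trapping bound for $A_{\omega,+}$ by combining the conservation laws with a continuity/bootstrap argument in time, starting from the variational characterization of $m_\omega$ in Proposition \ref{le2.4}. The starting point is the algebraic identity $\mathcal{S}_\omega(\varphi) = \mathcal{H}_\omega(\varphi) + \frac12 \mathcal{K}(\varphi)$ from \eqref{eq2.3new}, together with the definitions of $\mathcal{K}$ and $\mathcal{H}_\omega$; one rewrites $\mathcal{K}(u(t))$ so that the ``good'' (positive) quantities $\|\nabla u(t)\|_{L^2}^2$ and $\|u(t)\|_{L_x^{2(d+2)/d}}^{2(d+2)/d}$ appear with positive coefficients, isolating the bad term $-\frac{d(p-1)}{2(p+1)}\|u(t)\|_{L^{p+1}}^{p+1}$. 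The claimed dichotomy then says: at each time $t$, either $u(t)$ is ``far'' from the constraint manifold $\{\mathcal{K}=0\}$, in which case the bad term is controlled by a fraction of the good terms and one gets the first alternative, or $u(t)$ is ``close'' to $\{\mathcal{K}=0\}$, in which case $\mathcal{S}_\omega(u(t))$ is close to $m_\omega$ and the scaling functional is controlled below by $\delta(m_\omega - \mathcal{S}_\omega(u(t)))$.

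The key steps, in order, are as follows. First, note that $\mathcal{K}(u(0)) \ge 0$ by hypothesis, and by conservation of $\mathcal{S}_\omega$ along the flow and a continuity argument (the local theory in Section \ref{se4v24} guarantees $u \in C(\mathbb{R}; H^1)$), one shows $\mathcal{K}(u(t)) \ge 0$ for all $t$ for which the solution exists: the set of times where $\mathcal{K}(u(t)) > \epsilon$ is open, and one rules out $\mathcal{K}(u(t)) = 0$ with $u(t) \ne 0$ by the definition of $m_\omega$ as an infimum over that manifold combined with $\mathcal{S}_\omega(u(t)) < m_\omega$. Second, fix $t$ and consider the scaling path $\varphi_\lambda(x) = \varphi(x/\lambda)$ (or the $L^2$-critical scaling adapted to the mass-critical term, as used in \cite{CMZ}); compute $\partial_\lambda \mathcal{S}_\omega(\varphi_\lambda)$ at $\lambda = 1$ to see that it equals $\mathcal{K}(\varphi)$, and analyze the sign of the second variation to locate where $\mathcal{S}_\omega$ along the path is maximized. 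Third, split into the two regimes according to whether $\|\nabla u(t)\|_{L^2}^2 + \frac{d}{d+2}\|u(t)\|_{L_x^{2(d+2)/d}}^{2(d+2)/d}$ is below or above a threshold determined by $m_\omega$: in the sub-threshold regime, use the Gagliardo–Nirenberg inequality to absorb $\|u(t)\|_{L^{p+1}}^{p+1}$ into a small multiple of the good quantities, yielding the coefficient $\frac{d(p-1)-4}{d(p-1)}$; in the super-threshold regime, use Proposition \ref{le2.4} applied to a rescaled function $u(t)_\lambda$ with $\mathcal{K}(u(t)_\lambda) = 0$ to get $m_\omega \le \mathcal{H}_\omega(u(t)_\lambda)$, and then a Taylor expansion in $\lambda$ around the point where $\mathcal{K}$ vanishes converts the gap $m_\omega - \mathcal{S}_\omega(u(t))$ into a lower bound on $\mathcal{K}(u(t))$, producing the constant $\delta$.

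I expect the main obstacle to be the super-threshold (near-constraint) case: quantifying, with a uniform constant $\delta = \delta(d,p,\omega)$, how the size of $\mathcal{K}(u(t))$ is forced by the energy gap $m_\omega - \mathcal{S}_\omega(u(t))$. This requires a careful second-order analysis of the map $\lambda \mapsto \mathcal{S}_\omega(u(t)_\lambda)$ near its maximizer, using that the second derivative there is bounded away from zero uniformly over the bounded set $A_{\omega,+}$ (Lemma \ref{le2.8} gives the needed $H^1$ bound, and Lemma \ref{le2.6} gives the equivalence of $\mathcal{E}$ with the good quantities), together with the non-degeneracy of the scaling direction at the constrained minimizer $Q$. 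Everything else—the sign propagation of $\mathcal{K}$, the Gagliardo–Nirenberg absorption, and the bookkeeping of constants—is routine and is essentially carried out in \cite{CMZ}; indeed, since the proposition is attributed there, the argument here reduces to recalling that computation and recording the two-sided bound in the stated form.
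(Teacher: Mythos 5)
The paper itself does not prove this proposition: it is quoted from \cite{CMZ} (which follows the Ibrahim--Masmoudi--Nakanishi scheme), so the comparison must be with the argument there. Your skeleton --- propagate $\mathcal{K}(u(t))\ge 0$ in time via conservation of $\mathcal{S}_\omega$, continuity, and the variational characterization of $m_\omega$, then run a dichotomy along the $L^2$-invariant scaling path $\varphi^\lambda(x)=e^{d\lambda/2}\varphi(e^\lambda x)$ using $\mathcal{K}(\varphi)=\partial_\lambda \mathcal{S}_\omega(\varphi^\lambda)\big|_{\lambda=0}$ and Proposition \ref{le2.4} --- is the correct one. However, two of the mechanisms you propose are not the right ones, and the second constitutes a genuine gap. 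First, the coefficient $\frac{d(p-1)-4}{d(p-1)}$ in the first alternative does not come from a Gagliardo--Nirenberg absorption. Setting $\mu=\frac{d(p-1)}{2}>2$ and $L(\lambda)=\mathcal{S}_\omega(\varphi^\lambda)$, one has the exact identity
$\mathcal{K}(\varphi)-\frac{\mu-2}{\mu}\bigl(\|\nabla\varphi\|_{L^2}^2+\frac{d}{d+2}\|\varphi\|_{L^{2(d+2)/d}}^{2(d+2)/d}\bigr)=\frac{1}{\mu}L''(0)$,
so the dichotomy is precisely the sign of $L''(0)$; it is pure scaling algebra, and a Gagliardo--Nirenberg absorption would only produce an uncontrolled constant in place of this exact one.

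Second, and more seriously, your treatment of the near-constraint case rests on ``non-degeneracy of the scaling direction at the constrained minimizer $Q$'' and on the second derivative of $\lambda\mapsto\mathcal{S}_\omega(u(t)^\lambda)$ being bounded away from zero uniformly over $A_{\omega,+}$. Neither fact is available (nothing in the quoted variational material gives uniform second-order control, and $u(t)$ need not be near $Q$ in any topology), and neither is needed. The argument in \cite{CMZ} in the case $L''(0)<0$ is first-order: the identity $\mu L'(\lambda)-L''(\lambda)=(\mu-2)\bigl(\|\nabla\varphi^\lambda\|_{L^2}^2+\frac{d}{d+2}\|\varphi^\lambda\|_{L^{2(d+2)/d}}^{2(d+2)/d}\bigr)\ge 0$ shows $\mu L-L'$ is nondecreasing; since $L(\lambda)\to-\infty$ as $\lambda\to\infty$ (the $e^{\mu\lambda}$ term dominates), there is a first $\lambda_0>0$ with $L'(\lambda_0)=\mathcal{K}(\varphi^{\lambda_0})=0$, where Proposition \ref{le2.4} gives $m_\omega\le \mathcal{H}_\omega(\varphi^{\lambda_0})=\mathcal{S}_\omega(\varphi^{\lambda_0})=L(\lambda_0)$; a Gronwall-type use of the monotonicity of $\mu L-L'$ then bounds $L(\lambda_0)-L(0)=\int_0^{\lambda_0}L'\,\mathrm{d}\lambda$ by a constant (depending on $d,p,\omega$ and the $H^1$ bound of Lemma \ref{le2.8}) times $L'(0)=\mathcal{K}(\varphi)$, which is exactly the second alternative $m_\omega-\mathcal{S}_\omega(u(t))\le \delta^{-1}\mathcal{K}(u(t))$. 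As written, your route through second-order non-degeneracy at $Q$ could not be completed from the ingredients at hand, so the proposal would need to be repaired along these lines.
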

\begin{proposition}[Energy-trapping for $A_{\omega,-}$, \cite{CMZ}]\label{le2.7}
 For $u_0\in A_{\omega,-}$, let $u$ be the solution of \eqref{eq1.1}, then
\begin{equation} \label{eq2.11}
\mathcal{K}(u(t)) < - \left(m_\omega - \mathcal{S}_\omega(u(t))\right), \   \forall \,t\in \mathbb{R}.
\end{equation}
\end{proposition}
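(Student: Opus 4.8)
The plan is to reduce \eqref{eq2.11} to two facts: the forward invariance of $A_{\omega,-}$ under the flow of \eqref{eq1.1}, and a pointwise-in-time algebraic inequality built from the positive functional $\mathcal{H}_\omega$ together with the variational characterization of Proposition \ref{le2.4}.

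First I would establish that $u(t)\in A_{\omega,-}$ for every $t$ in the maximal interval of existence $I$, on which $u\in C(I,H^1(\mathbb{R}^d))$ by the local theory of Section \ref{se4v24}. Conservation of mass and energy gives $\mathcal{S}_\omega(u(t))=\mathcal{S}_\omega(u_0)<m_\omega$, and since $\mathcal{M}(u(t))=\mathcal{M}(u_0)>0$ we have $u(t)\neq0$ throughout. The map $t\mapsto\mathcal{K}(u(t))$ is continuous on $I$ (Sobolev embedding controls the $L^{p+1}$ and $L^{\frac{2(d+2)}d}$ terms in the energy-subcritical range). If $\mathcal{K}(u(t))$ were to take a nonnegative value at some $t_1\in I$, then since $\mathcal{K}(u(0))<0$ the intermediate value theorem would yield $t_0$ between $0$ and $t_1$ with $\mathcal{K}(u(t_0))=0$ and $u(t_0)\neq0$; Proposition \ref{le2.4} then forces $m_\omega\le\mathcal{H}_\omega(u(t_0))=\mathcal{S}_\omega(u(t_0))-\tfrac12\mathcal{K}(u(t_0))=\mathcal{S}_\omega(u_0)<m_\omega$, a contradiction. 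Hence $\mathcal{K}(u(t))<0$ for all $t\in I$.

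Next, for fixed $t$ I would simply combine \eqref{eq2.3new} with the previous step. Writing $\psi=u(t)$, the identity $\mathcal{H}_\omega=\mathcal{S}_\omega-\tfrac12\mathcal{K}$ gives $\mathcal{S}_\omega(\psi)-\mathcal{K}(\psi)=\mathcal{H}_\omega(\psi)-\tfrac12\mathcal{K}(\psi)$. Since $\mathcal{K}(\psi)<0$ strictly, the right-hand side exceeds $\mathcal{H}_\omega(\psi)$; and since $\mathcal{K}(\psi)\le0$ with $\psi\neq0$, Proposition \ref{le2.4} gives $\mathcal{H}_\omega(\psi)\ge m_\omega$. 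Therefore $\mathcal{S}_\omega(u(t))-\mathcal{K}(u(t))>m_\omega$, which is exactly \eqref{eq2.11} after rearranging.

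I do not expect a serious obstacle: all the analytic weight sits in Proposition \ref{le2.4}, which we may assume, and the only genuinely dynamical input is the continuity-plus-intermediate-value argument, standard in this circle of ideas (cf.\ \cite{Ibrahim-Masmoudi-Nakanishi,Kenig-Merle}). Were Proposition \ref{le2.4} not available, the real work would be the fibering analysis along the $L^2$-invariant scaling $\varphi^\lambda=\lambda^{d/2}\varphi(\lambda\cdot)$: one has $\frac{d}{d\lambda}\mathcal{S}_\omega(\varphi^\lambda)=\frac1\lambda\mathcal{K}(\varphi^\lambda)$, and since $p>1+\tfrac4d$ the kinetic and critical exponents (both equal to $2$) differ from the subcritical one ($\tfrac{d(p-1)}2$), so $\mathcal{K}(\varphi^\lambda)>0$ for small $\lambda$ while $\mathcal{K}(\varphi)<0$; extracting $\lambda_0\in(0,1)$ with $\mathcal{K}(\varphi^{\lambda_0})=0$, whence $\mathcal{S}_\omega(\varphi^{\lambda_0})\ge m_\omega$, and integrating $\frac1\lambda\mathcal{K}(\varphi^\lambda)$ over $[\lambda_0,1]$ would close the gap. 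With Proposition \ref{le2.4} in hand this detour is unnecessary.
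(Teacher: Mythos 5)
Your argument is correct: the continuity-plus-intermediate-value step shows $\mathcal{K}(u(t))<0$ persists, and then \eqref{eq2.3new} together with Proposition \ref{le2.4} gives $\mathcal{S}_\omega(u(t))-\mathcal{K}(u(t))=\mathcal{H}_\omega(u(t))-\tfrac12\mathcal{K}(u(t))>m_\omega$, which is \eqref{eq2.11}. The paper itself defers the proof to \cite{CMZ}, and what you wrote is exactly the standard argument used there, so there is nothing to add.
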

As a consequence of this Proposition, we can give the proof of the blow-up result if $u_0 \in \Sigma$ in the non-radial case in Theorem \ref{th1.4}.
For the radial case, we refer to \cite{CMZ}.
For
\begin{equation*}
V(t) = \int_{\mathbb{R}^d}  |x|^2   |u(t,x)|^2  \,\mathrm{d}x,
\end{equation*}
by direct computation together with the fact $u_0 \in A_{\omega, -}$, energy, mass conservation and \eqref{eq2.11}, we have
\begin{equation*}
\begin{split}
V''(t) =8 \mathcal{K}(u)< - 8 \left(m_\omega - \mathcal{S}_\omega(u(t))\right),\ \forall\, t \in I_{max},
\end{split}
\end{equation*}
where $I_{max}$ is the maximal lifespan of $u$, which implies $u$ must blow up in finite time.

\section{Wellposedness and stability theory} \label{se4v24}
In this section, we present the wellposedness theory and the stability theory for \eqref{eq1.1}. For the proof we refer to \cite{Cazenave, Cazenave-Weissler, Killip-Visan1}, see also \cite{CMZ}.
\begin{proposition}\label{th4.1}
{\it (i)} (Local existence) Let $\phi \in H^{1}(\mathbb{R}^d)$, $I$ be an interval, $t_0\in I$ and $A>0$. Assume that
\begin{equation*}
\left\|\phi \right\|_{H^1}\le A,
\end{equation*}
and there exists $\delta>0$ depending on $A$ that
\begin{equation*}
\left\|\langle \nabla \rangle  e^{i(t-t_0)\Delta}\phi \right\|_{L_{t,x}^\frac{2(d+2)}d(I\times \mathbb{R}^d)} \le \delta,
\end{equation*}
then there exists a unique solution $u \in C(I,H^1(\mathbb{R}^{d}))$ with $u(t_0)= \phi$ to \eqref{eq1.1} such that
\begin{align*}
\left\| u \right\|_{S^1(I)} \lesssim   \left\| \phi \right\|_{H^1}, \text{ and }
\left\|  \langle \nabla \rangle u \right\|_{L_{t,x}^\frac{2(d+2)}d(I\times \mathbb{R}^d)} \le& 2 \left\| \langle \nabla\rangle e^{i(t-t_0)\Delta} \phi \right\|_{L_{t,x}^\frac{2(d+2)}d(I\times \mathbb{R}^d)}.
\end{align*}
As a consequence, we have the small data scattering:
if $\left\|\phi\right\|_{H^1}$ is sufficiently small, then $u$ is a global solution with $\left\|u\right\|_{S^1(\mathbb{R})} \lesssim \left\|\phi\right\|_{H^1}$.
By Lemma \ref{le2.8}, we see the solution is global in $H^1$ when $\phi\in A_{\omega,+}$.

{\it (ii)}(Scattering criterion)
Let $u \in C(\mathbb{R}, H^1(\mathbb{R}^d))$ be the solution to \eqref{eq1.1}, if
\begin{equation}\label{eq4.11}
\left\| \langle \nabla \rangle^{s_p} u \right\|_{L_{t,x}^\frac{2(d+2)}d (\mathbb{R} \times \mathbb{R}^d)} \le  L,
\end{equation}
for some positive constant $L$,
then there exist $u_{\pm}\in H^{1}(\mathbb{R}^d)$ such that
\begin{equation*}\label{eq4.12}
\lim_{t\to \infty}
\left\|u(t)-e^{it\Delta}u_+ \right\|_{H^1}=\lim_{t\to -\infty}\left\|u(t) - e^{it\Delta}u_- \right\|_{H^1}=0.
\end{equation*}
\end{proposition}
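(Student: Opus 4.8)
The plan is to obtain both parts from the Strichartz inequalities and the fractional product and chain rules, in the manner of \cite{Cazenave-Weissler,Killip-Visan1}, keeping track of the fact that the mass-critical term $|u|^{4/d}u$ and the mass-supercritical, energy-subcritical term $|u|^{p-1}u$ must be handled by different mechanisms. For part (i), I would write \eqref{eq1.1} in Duhamel form $u(t)=e^{i(t-t_0)\Delta}\phi-i\int_{t_0}^{t}e^{i(t-s)\Delta}(|u|^{4/d}u-|u|^{p-1}u)\,\mathrm{d}s$ and run a contraction on the ball $\{u:\|u\|_{S^1(I)}\le 2C\|\phi\|_{H^1},\ \|\langle\nabla\rangle u\|_{L_{t,x}^{2(d+2)/d}(I)}\le 2\eta\}$, where $\eta:=\|\langle\nabla\rangle e^{i(t-t_0)\Delta}\phi\|_{L_{t,x}^{2(d+2)/d}(I)}\le\delta$, metrized by the derivative-free $L_{t,x}^{2(d+2)/d}(I)$ distance (so that derivatives of differences never need to be controlled, and uniqueness follows from the same estimates on small subintervals). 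The key nonlinear bounds: the fractional chain rule gives the dual-Strichartz norm of $\langle\nabla\rangle(|u|^{4/d}u)$ as $\lesssim\|\langle\nabla\rangle u\|_{L_{t,x}^{2(d+2)/d}(I)}^{1+4/d}$, a power strictly above one; and for $|u|^{p-1}u$ one distributes the $p$ homogeneity factors so as to extract a total power strictly greater than one of the small norm $\|\langle\nabla\rangle u\|_{L_{t,x}^{2(d+2)/d}(I)}$, the remaining power going into a Strichartz norm bounded by $\|u\|_{S^1(I)}$ (in dimensions $1,2$ this is arranged with a non-endpoint admissible pair). Since in each case the small quantity carries a power greater than one, the map contracts once $\delta=\delta(\|\phi\|_{H^1})$ is small, without any constraint on $|I|$, and the stated bounds follow; taking $I=\mathbb{R}$ together with $\|\langle\nabla\rangle e^{it\Delta}\phi\|_{L_{t,x}^{2(d+2)/d}(\mathbb{R})}\lesssim\|\phi\|_{H^1}$ gives small-data global existence and scattering, and Lemma~\ref{le2.8} gives globality for $A_{\omega,+}$-data.

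For part (ii) it suffices to upgrade \eqref{eq4.11} to $\|u\|_{S^1(\mathbb{R})}<\infty$; scattering then follows by applying the inhomogeneous Strichartz estimate to $e^{-it\Delta}u(t)-e^{-it'\Delta}u(t')=-i\int_{t'}^{t}e^{-is\Delta}(|u|^{4/d}u-|u|^{p-1}u)\,\mathrm{d}s$ to see that $e^{-it\Delta}u(t)$ is Cauchy in $H^1$ as $t\to\pm\infty$, the limits being $u_\pm$. Since $\langle\nabla\rangle^{-s_p}$ is bounded on $L^{2(d+2)/d}$, hypothesis \eqref{eq4.11} bounds $\|u\|_{L_{t,x}^{2(d+2)/d}(\mathbb{R})}$, so $\mathbb{R}$ splits into finitely many intervals $I_j$ on which $\|\langle\nabla\rangle^{s_p}u\|_{L_{t,x}^{2(d+2)/d}(I_j)}\le\eta$. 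On each $I_j$ I would run a Duhamel/Strichartz estimate \emph{at regularity $s_p$}: the mass-critical term contributes $\lesssim\eta^{4/d}\|\langle\nabla\rangle^{s_p}u\|_{S^0(I_j)}$, and $|u|^{p-1}u$, being exactly $\dot{H}^{s_p}$-critical because $s_p=\frac d2-\frac2{p-1}$, contributes $\lesssim\eta^{p-1}\|\langle\nabla\rangle^{s_p}u\|_{S^0(I_j)}$, its $p-1$ low-order factors being absorbed, via Sobolev embedding, into the very $L_{t,x}^{2(d+2)/d}$ norm used for the subdivision. A continuity argument then gives $\|\langle\nabla\rangle^{s_p}u\|_{S^0(I_j)}\lesssim\|u(a_j)\|_{H^{s_p}}$, $a_j$ the left endpoint of $I_j$, and iterating over the finitely many $I_j$ (the $H^{s_p}$-norm inflating by a bounded factor across each) yields $\|\langle\nabla\rangle^{s_p}u\|_{S^0(\mathbb{R})}<\infty$ and $\sup_t\|u(t)\|_{H^{s_p}}<\infty$.

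To pass from $S^{s_p}$ to $S^1$ I would repeat the scheme with $\langle\nabla\rangle$ in place of $\langle\nabla\rangle^{s_p}$: subdivide each $I_j$ into finitely many intervals on which $\|\langle\nabla\rangle^{s_p}u\|_{S^0}\le\eta'$ (possible since that norm is now finite on $I_j$), and on each such interval, with $a$ its left endpoint, use the inhomogeneous Strichartz estimate, the chain rule for $|u|^{4/d}u$, and the $\dot{H}^{s_p}$-critical estimate for $|u|^{p-1}u$ (the low-order factors now sitting in norms $\le\|\langle\nabla\rangle^{s_p}u\|_{S^0}$) to get $\|\langle\nabla\rangle u\|_{S^0}\lesssim\|u(a)\|_{H^1}+(\eta^{4/d}+(\eta')^{p-1})\|\langle\nabla\rangle u\|_{S^0}$; taking $\eta,\eta'$ small and summing over the finitely many intervals gives $\|u\|_{S^1(\mathbb{R})}<\infty$ (and as a byproduct $\sup_t\|u(t)\|_{H^1}<\infty$, so no a priori $H^1$ bound need be assumed), which completes part (ii).

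I expect the main obstacle to be the nonlinear estimate for $|u|^{p-1}u$ in the two bootstraps of part (ii): one needs it precisely in the form that the dual-Strichartz norm of $\langle\nabla\rangle^{s_p}(|u|^{p-1}u)$ on $I$ is $\lesssim\|\langle\nabla\rangle^{s_p}u\|_{L_{t,x}^{2(d+2)/d}(I)}^{p-1}\|\langle\nabla\rangle^{s_p}u\|_{S^0(I)}$, so that the $p-1$ low-order factors are controlled by exactly the quantity on which $\mathbb{R}$ is subdivided; arranging this forces a careful choice of $L^2$-admissible exponents and, for $d=1,2$ with $p$ large, a suitable Sobolev embedding of $W^{s_p,2(d+2)/d}$ into Lebesgue spaces. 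The mass-critical term is comparatively harmless, because the subcritical regularity $s_p$ already dominates its $L_{t,x}^{2(d+2)/d}$ norm, so the smallness needed on each $I_j$ is automatic.
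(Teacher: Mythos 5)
Your proposal is correct and follows essentially the same route as the paper: part (i) is the standard contraction mapping argument (which the paper simply cites from \cite{CMZ}), and part (ii) is the same two-stage bootstrap --- first upgrading \eqref{eq4.11} to $\|u\|_{S^{s_p}(\mathbb{R})}<\infty$ by Strichartz and continuity, then subdividing time into finitely many intervals on which the $S^{s_p}$ norm is at most $\delta_0$ and closing a linear estimate $\|u\|_{S^1(J_k)}\le C\bigl(\|u(t_k)\|_{H^1}+\delta_0^{4/d}\|u\|_{S^1(J_k)}+\delta_0^{p-1}\|u\|_{S^1(J_k)}\bigr)$ on each piece. The one caveat concerns the estimate you flag as the main obstacle: the bound with all $p-1$ low-order factors measured in $\|\langle\nabla\rangle^{s_p}u\|_{L_{t,x}^{2(d+2)/d}}$ is not available by Sobolev embedding alone (the time exponents of $L_{t,x}^{(d+2)(p-1)/2}$ and $L_{t,x}^{2(d+2)/d}$ only match when $p=1+\tfrac4d$), so those factors must be placed in the admissible component of $S^{s_p}$ with $q=\tfrac{(d+2)(p-1)}2$; this makes your first bootstrap superlinear in the bootstrap quantity, which still closes via the a priori $L_t^\infty H^1$ bound, and is precisely why the paper's second subdivision is taken with respect to the full $S^{s_p}$ norm.
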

\begin{proof}
We refer to \cite{CMZ} for the proof of (i), due to the subcritical essential of the equation together with the variational estimate in Section \ref{se2v9}, the solution is global in time. For (ii), we only need to show \eqref{eq4.11} implies
\begin{align}\label{eq3.3v9}
\| u \|_{S^1(\mathbb{R})} < \infty.
 \end{align}
By the Strichartz estimate and the continuity method, we can easily obtain \eqref{eq4.11} implies
\begin{align}\label{eq3.4v9}
\| u \|_{S^{s_p}(\mathbb{R})} < C(L) .
\end{align}
By \eqref{eq3.4v9}, we can divide the time interval $\mathbb{R}$ into $N_1 \sim \left( 1 + \frac{C(L)} {\delta_0 } \right)^\frac{2(d+2)} d $ subintervals $J_k = [t_k, t_{k+1}]$ such that
\begin{align*}
\|u \|_{S^{s_p}(J_k)} \le \delta_0,
\end{align*}
where $\delta_0$ will be chosen later.
On each $J_k$, by Strichartz and H\"older, we have
\begin{align*}
\|u\|_{S^1(J_k)} \le C\left( \|u(t_k) \|_{H^1} + \delta_0^\frac4d \|   u \|_{S^1(J_k )} +\delta_0^{p-1} \|  u \|_{S^1(J_k \times \mathbb{R}^d)} \right).
\end{align*}
Therefore, by choosing $\delta_0 $ small enough, we have
\begin{align*}
\|u\|_{S^1(J_k)} & \le 2  C  \|u(t_k) \|_{H^1}.
\end{align*}
Summing over the subintervals $J_k$, we obtain \eqref{eq3.3v9}.
\end{proof}
\begin{remark}
In fact, we can show
\begin{equation*}
\left\|  u \right\|_{L_{t,x}^\frac{2(d+2)}d (\mathbb{R} \times \mathbb{R}^d)} \le  L,
\end{equation*}
is enough to show scattering by using the continuity method. But we find Proposition \ref{pr4.3} can not be proven if the approximate solution is only if $L_{t,x}^\frac{2(d+2)}d$, therefore we use the norm $L_t^\frac{2(d+2)}d W_x^{s_p, \frac{2(d+2)}d}$.
\end{remark}
In the following, we will give the stability theory.
\begin{proposition}[Stability theory]\label{pr4.3}
Let $I$ be a compact time interval and let $w$ be an approximate solution to \eqref{eq1.1} on $I\times \mathbb{R}^d$ in the sense that
\begin{equation*}
i\partial_t w + \Delta w = |w|^\frac4d w - |w|^{p-1} w + e
\end{equation*}
for some function $e$. Assume that
\begin{align*}
\left\|w\right\|_{L_t^\infty H_x^1(I\times \mathbb{R}^d)} \le A_1,  \
\left\|\langle \nabla \rangle^{s_p} w\right\|_{L_{t,x}^\frac{2(d+2)}d (I\times \mathbb{R}^d)} \le B \label{chuzjias}
\end{align*}
for some $A_1, B > 0$.

Let $t_0\in I$ and $u(t_0)$ close to $w(t_0)$ in the sense that
\begin{equation*}\label{eq1.34v11}
\left\|u(t_0) - w(t_0)\right\|_{H_x^{s_p}(\mathbb{R}^d)} \le A_2
\end{equation*}
for some $A_2 > 0$. Assume also the smallness conditions
\begin{align}
\left\|\langle \nabla \rangle^{s_p} e^{i(t-t_0)\Delta}(u(t_0)- w(t_0))\right\|_{L_{t,x}^\frac{2(d+2)}d(I\times \mathbb{R}^d)} +  
\left\|\langle \nabla \rangle^{s_p} e\right\|_{L_{t,x}^\frac{2(d+2)}{d+4}(I\times \mathbb{R}^d)} \le \delta \label{eq3.7v37}
\end{align}
for some $0 < \delta \le \delta_1$, where $\delta_1 = \delta_1(A_1, A_2, B)$ is a small constant. Then there exists a solution $u$ to \eqref{eq1.1} on
$I\times \mathbb{R}^d$ with the specified initial data $u(t_0)$ at time $t = t_0$ that satisfies
\begin{align*}
\left\|\langle \nabla \rangle^{s_p} (  u - w) \right\|_{L_{t,x}^\frac{2(d+2)}d ( I \times \mathbb{R}^d)} \le  C(A_1, A_2, B) \delta, \\
\left\|u - w\right\|_{S^{s_p}(I)} \le  C(A_1, A_2, B) A_2, \quad
\left\| u\right\|_{S^{s_p}(I)} \le  C(A_1, A_2, B).
\end{align*}
\end{proposition}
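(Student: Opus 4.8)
The plan is to run the standard Kenig--Merle-type perturbation argument, but carried out in the lower-regularity space $\dot H^{s_p}$ (equivalently with $\langle\nabla\rangle^{s_p}$ derivatives on every Strichartz norm), which is the natural regularity in which the scattering criterion \eqref{eq4.11} is phrased. First I would set $v := u - w$, so that $v$ solves
\begin{equation*}
i\partial_t v + \Delta v = \mathcal{N}(w+v) - \mathcal{N}(w) - e,
\qquad \mathcal{N}(z) := |z|^{\frac4d}z - |z|^{p-1}z,
\end{equation*}
with $v(t_0) = u(t_0) - w(t_0)$. By the Duhamel formula, $v(t) = e^{i(t-t_0)\Delta}v(t_0) - i\int_{t_0}^t e^{i(t-s)\Delta}\big(\mathcal{N}(w+v)-\mathcal{N}(w)-e\big)(s)\,\mathrm{d}s$, and I would estimate its $S^{s_p}$ norm via the fractional-derivative Strichartz inequality, using the dual admissible exponent $L_{t,x}^{\frac{2(d+2)}{d+4}}$ for the nonlinearity and the error term.

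The core of the argument is a bootstrap/continuity argument on a partition of $I$. Using the a priori bound $\|\langle\nabla\rangle^{s_p}w\|_{L_{t,x}^{\frac{2(d+2)}d}(I)}\le B$, I would subdivide $I$ into $N = N(A_1,B,\eta)$ consecutive subintervals $I_j$ on each of which $\|\langle\nabla\rangle^{s_p}w\|_{L_{t,x}^{\frac{2(d+2)}d}(I_j)}\le\eta$ for a small $\eta$ to be chosen. On a single $I_j$, writing the difference of nonlinearities as $\mathcal{N}(w+v)-\mathcal{N}(w) = O(|w|^{4/d}|v|) + O(|v|^{1+4/d}) + O(|w|^{p-1}|v|) + O(|v|^p)$ and applying the fractional chain/product rule (Christ--Weinstein) together with H\"older in spacetime, I get a closed estimate of the schematic form
\begin{equation*}
\|v\|_{S^{s_p}(I_j)} \le C\Big(\|v(t_j)\|_{H^{s_p}} + \delta + \big(\eta^{4/d} + \eta^{p-1} + \|v\|_{S^{s_p}(I_j)}^{4/d} + \|v\|_{S^{s_p}(I_j)}^{p-1}\big)\|v\|_{S^{s_p}(I_j)}\Big);
\end{equation*}
here the terms purely nonlinear in $v$ are absorbed by a standard continuity argument provided $\|v\|_{S^{s_p}(I_j)}$ starts small, and the terms carrying a factor of $\eta$ are absorbed into the left side once $\eta$ is fixed small depending only on $d,p$. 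This yields $\|v\|_{S^{s_p}(I_j)}\le 2C\big(\|v(t_j)\|_{H^{s_p}}+\delta\big)$ on each subinterval. One then iterates over $j=1,\dots,N$: the control of $\|v(t_{j+1})\|_{H^{s_p}}$ comes from the previous step (with the at-most-geometric growth constant depending on $N$, hence on $A_1,B$), and since $\|v(t_0)\|_{H^{s_p}}\le A_2$, a telescoping/induction gives $\|v\|_{S^{s_p}(I)}\le C(A_1,A_2,B)A_2$ and $\|\langle\nabla\rangle^{s_p}v\|_{L_{t,x}^{\frac{2(d+2)}d}(I)}\le C(A_1,A_2,B)\delta$, provided $\delta\le\delta_1(A_1,A_2,B)$ is small enough to launch the continuity argument at every stage. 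The bound $\|u\|_{S^{s_p}(I)}\le C(A_1,A_2,B)$ then follows from $u = w + v$ and another Strichartz estimate for $w$ itself (using its two a priori bounds). The existence of an actual solution $u$ on all of $I$ with data $u(t_0)$ is a byproduct: the same fixed-point/continuity scheme constructs it, or alternatively one invokes local wellposedness from Proposition \ref{th4.1} and shows the a priori bounds prevent blow-up of the $S^{s_p}$ norm.

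The main obstacle I anticipate is \emph{the interplay of the two nonlinearities at fractional regularity}: the mass-critical term $|u|^{4/d}u$ is barely controllable in $\dot H^{s_p}$ with $s_p\in(0,1)$, so one must be careful that the fractional chain rule is applied with admissible exponent pairs that actually close (this is exactly the point flagged in the remark after Proposition \ref{th4.1}, and the reason the $L_{t,x}^{\frac{2(d+2)}d}$-only version of the scattering criterion is not enough here). Concretely, one needs that $\langle\nabla\rangle^{s_p}$ of the nonlinearity lies in $L_{t,x}^{\frac{2(d+2)}{d+4}}$, which forces a delicate distribution of derivatives and of the H\"older exponents between the $|w|$-factors and the $|v|$-factor; for $d\le 4$ and $1+\frac4d<p<1+\frac4{d-2}$ the relevant exponents all lie in the admissible range, but this must be checked, and in $d=2$ the endpoint modification of $S^s$ (the $q\ge 2+\epsilon$ restriction) has to be respected throughout. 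Once the bookkeeping of exponents is set up correctly, the rest is the routine Kenig--Merle continuity machinery.
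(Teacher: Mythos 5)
The paper does not actually prove this proposition: it states it and defers to \cite{Cazenave, Cazenave-Weissler, Killip-Visan1} and \cite{CMZ}, so your proposal is being measured against the standard long-time perturbation argument that those references contain. Your overall strategy --- Duhamel for $v=u-w$, subdivision of $I$ into $N(A_1,B,\eta)$ subintervals on which $\|\langle\nabla\rangle^{s_p}w\|_{L_{t,x}^{2(d+2)/d}}\le\eta$, fractional chain/product rule into the dual exponent $L_{t,x}^{\frac{2(d+2)}{d+4}}$, and iteration with geometrically growing constants --- is exactly that argument, and your closing remarks correctly identify why the exponent bookkeeping closes only for $d\le 4$ (this is precisely the obstruction the paper cites for $d\ge 5$) and why the $W^{s_p,\frac{2(d+2)}{d}}$-based norm, rather than $L_{t,x}^{\frac{2(d+2)}{d}}$ alone, is needed.

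One step in your schematic needs repair. Your displayed closed estimate bootstraps the full norm $\|v\|_{S^{s_p}(I_j)}$ and you absorb the self-interaction terms $\|v\|_{S^{s_p}(I_j)}^{4/d}\|v\|_{S^{s_p}(I_j)}$ and $\|v\|_{S^{s_p}(I_j)}^{p-1}\|v\|_{S^{s_p}(I_j)}$ ``provided $\|v\|_{S^{s_p}(I_j)}$ starts small.'' But the hypothesis only gives $\|v(t_0)\|_{H^{s_p}}\le A_2$ with $A_2>0$ arbitrary (and the stated conclusion $\|u-w\|_{S^{s_p}(I)}\le C(A_1,A_2,B)A_2$ is allowed to be large), so $\|v\|_{S^{s_p}(I_j)}$ is \emph{not} small and cannot be used as the small parameter in the continuity argument. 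The standard fix, which you should make explicit, is to run the bootstrap on the critical quantity $X_j:=\|\langle\nabla\rangle^{s_p}v\|_{L_{t,x}^{2(d+2)/d}(I_j)}$ alone: estimating the self-interactions as, e.g., $\||v|^{4/d}\langle\nabla\rangle^{s_p}v\|_{L_{t,x}^{2(d+2)/(d+4)}}\lesssim X_j^{1+4/d}$, one closes $X_j\lesssim \|\langle\nabla\rangle^{s_p}e^{i(t-t_j)\Delta}v(t_j)\|_{L_{t,x}^{2(d+2)/d}(I_j)}+\delta+(\eta+X_j)^{4/d}X_j+(\eta+X_j)^{p-1}X_j$, which yields $X_j=O(\delta)$ since the free-evolution term is $O(C^{j}\delta)$ by induction; the bound on the full $\|v\|_{S^{s_p}(I_j)}$ is then obtained a posteriori, linearly in $\|v(t_j)\|_{H^{s_p}}$, and is merely bounded, not small. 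With that bookkeeping corrected the rest of your argument goes through and coincides with the proof the paper is citing.
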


\section{Linear profile decomposition}\label{se5v24}
The linear profile decomposition was first established by S. Keraani \cite{Keraani1} for the Schr\"odinger equation in $\dot{H}^1(\mathbb{R}^d)$. At almost the same time, F. Merle and L. Vega \cite{Merle-Vega} gave the linear profile decomposition for the Schr\"odinger equation in $L^2(\mathbb{R}^2)$, then R. Carles and S. Keraani \cite{Carles-Keraani} established this in $L^2(\mathbb{R})$. S. Keraani \cite{Keraani2} used the linear profile decomposition in $L^2$ to describe the minimal mass blowup solution of the mass-critical nonlinear Schr\"odinger equation. Later, P. B\'egout and A. Vargas \cite{Begout-Vargas} proved the linear profile decomposition in $L^2(\mathbb{R}^d),\  d \ge 3$ of the Schr\"odinger equation. In this section, we will give the linear profile decomposition in $H^1(\mathbb{R}^d)$ by using the linear profile decomposition in $L^2(\mathbb{R}^d)$ for the Schr\"odinger equation. First, we review the linear profile decomposition in $L^2(\mathbb{R}^d)$.
\begin{lemma}[Profile decomposition in $L^2(\mathbb{R}^d)$, $d\ge 1$,\cite{Begout-Vargas,Carles-Keraani,Keraani2,Merle-Vega,Tao-Visan-Zhang}]\label{th4.1v7}
Let $\{\varphi_n\}_{n\ge 1}$ be a bounded sequence in $L^2(\mathbb{R}^d)$. Then up to passing to a subsequence of $\{\varphi_n\}_{n\ge 1}$, there exists a sequence of functions $\varphi^j \in L^2(\mathbb{R}^d)$ and $( \theta_n^j, h_n^j, t_n^j, x_n^j, \xi_n^j)_{n\ge 1} \subseteq  \mathbb{R}/2\pi\mathbb{Z} \times (0,\infty)\times  \mathbb{R} \times \mathbb{R}^d \times \mathbb{R}^d$, with
\begin{align}\label{eq8.4v14}
& \frac{h_n^l}{h_n^j} + \frac{h_n^j}{h_n^l} + \frac{|t_n^j -t_n^l|}{(h_n^j)^2}+ h_n^j | \xi_n^j -\xi_n^l|  +  \frac{\left| x_n^j- x_n^l  + 2t_n^j (\xi_n^j -   \xi_n^l)\right| }{h_n^j} \to \infty,  \text{  for }  j \ne l,\\
& h_n^j \to h_\infty^j\in \{0, 1, \infty\},\  h_n^j = 1 \text{ if }  h_\infty^j = 1, \label{eq8.1v14}\\
& \tau_n^j := - \frac{t_n^j}{(h_n^j)^2} \to \tau_\infty^j \in [-\infty, \infty], \text{ as }  n\to \infty, \label{eq8.2v14}\\
\text{ and } & \xi_n^j = 0 \text{  if  }    \underset{ n \to \infty} {\limsup}\, |h_n^j \xi_n^j| < \infty, \label{eq8.3v14}
\end{align}
such that $\forall \,k \ge 1$, there exists $w^k_n\in L^2(\mathbb{R}^d)$,
\begin{equation*}
\varphi_n(x) = \sum\limits_{j=1}^k  T_n^j \varphi^j(x) + w^k_n(x),
\end{equation*}
where $T_n^j$ is defined by
\begin{align}\label{eq4.5v31}
T_n^j \varphi(x) := e^{i\theta_n^j}
e^{ix\cdot\xi_n^j} e^{-it_n^j \Delta} \left(\frac1{(h_n^j)^\frac{d}2 } \varphi \left(\frac{\cdot - x_n^j }{h_n^j}\right)\right)(x).
\end{align}
The remainder $w_n^k$ satisfies
\begin{equation}\label{eq8.6v14}
 \underset{ n\to \infty} \limsup \, \left\|e^{it\Delta} w_n^k\right\|_{L_t^q L_x^r(\mathbb{R}\times \mathbb{R}^d)} \to 0, \text{ as }  k\to \infty,
\end{equation}
 where $(q,r)$ is $L^2$-admissible, and $2 < q < \infty \text{ when } d\ge 2,\  4 < q < \infty \text{ when } d = 1$.
 We also have as $n\to \infty$,
\begin{align}
& \ \left\|e^{it\Delta} T_n^j \varphi^j   \cdot e^{it\Delta} T_n^l \varphi^l \right\|_{L_{t,x}^\frac{d+2}d(\mathbb{R} \times \mathbb{R}^d)} \to 0, \  \langle T_n^j \varphi^j, T_n^l \varphi^l \rangle_{L^2}\to 0,  \text{ for } j\ne l, \nonumber \\
  \text{ and } &   \langle T_n^j \varphi^j , w_n^k\rangle_{L^2} \to 0, \  (T_n^j)^{-1} w_n^k \rightharpoonup 0 \text{ in } L^2(\mathbb{R}^d),\ \forall \,1 \le j\le k.\label{eq8.8v14}
\end{align}
As a consequence, we have the mass decoupling property:
\begin{equation}\label{eq8.9v14}
\forall\, k \ge 1,\
\left\|\varphi_n\right\|_{L^2}^2 - \sum\limits_{j=1}^k \left\|\varphi^j\right\|_{L^2}^2 - \left\|w_n^k\right\|_{L^2}^2 \to 0, \text{ as } n\to \infty.
\end{equation}
\end{lemma}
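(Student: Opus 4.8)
The plan is to obtain the decomposition by iteratively extracting \emph{profiles} from the sequence by means of an inverse (refined) Strichartz inequality, and then to verify the orthogonality and mass-decoupling assertions via the standard weak-convergence bookkeeping. This is the classical $L^2$ linear profile decomposition for the free Schr\"odinger flow, and the structure I would follow is the one below.

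\emph{Step 1: the inverse Strichartz inequality.} The analytic engine is a refined Strichartz estimate --- of Bourgain--Moyua--Vargas--Vega type, with the precise versions being Carles--Keraani for $d=1$, Merle--Vega for $d=2$, and B\'egout--Vargas for $d\ge 3$ --- which I would upgrade to the following dichotomy: if $(f_n)$ is bounded in $L^2(\mathbb{R}^d)$ and $\limsup_n \|e^{it\Delta}f_n\|_{L_t^qL_x^r(\mathbb{R}\times\mathbb{R}^d)} = \varepsilon > 0$, then, after passing to a subsequence, there exist parameters $(\theta_n,h_n,t_n,x_n,\xi_n)$ in $\mathbb{R}/2\pi\mathbb{Z}\times(0,\infty)\times\mathbb{R}\times\mathbb{R}^d\times\mathbb{R}^d$ so that, with $T_n$ as in \eqref{eq4.5v31}, $(T_n)^{-1}f_n \rightharpoonup \psi$ weakly in $L^2$ with a quantitative gain $\|\psi\|_{L^2}^2 \gtrsim \varepsilon^\beta (\limsup_n \|f_n\|_{L^2})^{-\gamma}$ for some $\beta,\gamma>0$ depending only on $d$. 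To prove this I would run a Littlewood--Paley decomposition to isolate a dyadic frequency annulus carrying a fixed proportion of the Strichartz norm, feed it into a bilinear restriction/Strichartz estimate to produce a space-time region on which $|e^{it\Delta}f_n|$ is comparably large, and then extract $\psi$ as a weak-$*$ limit after renormalizing by the corresponding element of the symmetry group --- the frequency localization providing the Galilean parameter $\xi_n$, the physical location providing $x_n$, the time providing $t_n$, and the scale of the annulus providing $h_n$.

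\emph{Step 2: iteration.} Set $w_n^0 := \varphi_n$. Given $w_n^{k-1}$, if $\limsup_n\|e^{it\Delta}w_n^{k-1}\|_{L_t^qL_x^r} = 0$ I stop; otherwise I apply Step 1 to $(w_n^{k-1})_n$ to produce $\varphi^k := \psi$ together with parameters $(\theta_n^k,h_n^k,t_n^k,x_n^k,\xi_n^k)$, and set $w_n^k := w_n^{k-1} - T_n^k\varphi^k$. The weak convergence $(T_n^k)^{-1}w_n^{k-1}\rightharpoonup\varphi^k$ yields the stagewise Pythagorean identity $\|w_n^{k-1}\|_{L^2}^2 = \|\varphi^k\|_{L^2}^2 + \|w_n^k\|_{L^2}^2 + o_n(1)$; inducting, $\sum_j\|\varphi^j\|_{L^2}^2 \le \limsup_n\|\varphi_n\|_{L^2}^2 < \infty$, so $\|\varphi^k\|_{L^2}\to 0$, and comparing with the quantitative lower bound of Step 1 at stage $k$ forces $\limsup_n\|e^{it\Delta}w_n^k\|_{L_t^qL_x^r}\to 0$, which is \eqref{eq8.6v14}. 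Summing the stagewise identities gives \eqref{eq8.9v14}, and a diagonal extraction arranges that all the limits in the statement hold along one subsequence.

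\emph{Step 3: orthogonality and normalization.} For $j<l$, the profile $\varphi^l$ is extracted from $w_n^{l-1}$, from which $T_n^j\varphi^j$ has already been removed and which satisfies $(T_n^j)^{-1}w_n^j\rightharpoonup 0$. If the parameter tuples for $j$ and $l$ violated \eqref{eq8.4v14}, i.e.\ $(T_n^j)^{-1}T_n^l$ converged to a bounded symmetry of the free flow, then $(T_n^l)^{-1}w_n^{l-1}$ would converge weakly to $0$ rather than to $\varphi^l\ne 0$ --- a contradiction; so \eqref{eq8.4v14} holds, and the same divergence of parameters gives $\langle T_n^j\varphi^j, T_n^l\varphi^l\rangle_{L^2}\to 0$ and, after approximating $\varphi^j,\varphi^l$ by Schwartz functions and using the explicit Schr\"odinger kernel, $\|e^{it\Delta}T_n^j\varphi^j\cdot e^{it\Delta}T_n^l\varphi^l\|_{L_{t,x}^{\frac{d+2}{d}}}\to 0$. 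Finally I would pass to further subsequences to normalize: arrange $h_n^j\to h_\infty^j\in\{0,1,\infty\}$, absorbing a finite positive limiting scale into $\varphi^j$ so that $h_n^j\equiv 1$ when $h_\infty^j=1$ (this is \eqref{eq8.1v14}); arrange $\tau_n^j = -t_n^j/(h_n^j)^2 \to \tau_\infty^j\in[-\infty,\infty]$ (this is \eqref{eq8.2v14}); and if $\limsup_n|h_n^j\xi_n^j|<\infty$, perform a Galilean change of variables turning the boost into a bounded modulation and translation which is absorbed into $\varphi^j$ and the other parameters, so that $\xi_n^j = 0$ (this is \eqref{eq8.3v14}). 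The hardest step will be Step 1: unlike in the $\dot H^1$ case, the mass-critical free flow carries the full non-compact, non-abelian symmetry group (phase, scaling, space translation, time translation, Galilean boost), and one must keep this group intact so that the extracted parameters genuinely range over it and \eqref{eq8.4v14} correctly expresses that two parameter tuples do not asymptotically coincide modulo the group --- in particular the regime where the frequency parameters $h_n^j\xi_n^j$ escape to infinity must be handled, which is precisely what the Galilean-covariant bilinear restriction estimate is designed to do.
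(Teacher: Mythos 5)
Your proposal is correct, and it is essentially the proof found in the references the lemma cites (B\'egout--Vargas, Carles--Keraani, Merle--Vega, Keraani): an inverse Strichartz inequality built on refined/bilinear restriction estimates, iterative profile extraction, and the standard weak-convergence bookkeeping for orthogonality and mass decoupling. The paper itself takes a much shorter route: it quotes the core decomposition --- existence of the profiles, the parameter orthogonality \eqref{eq8.4v14}, the remainder vanishing in $L_{t,x}^{\frac{2(d+2)}{d}}$, and the decouplings --- as a black box from those references, and only proves the three incremental statements \eqref{eq8.1v14}, \eqref{eq8.3v14}, and \eqref{eq8.6v14}: when $h_n^j\to h_\infty^j\in(0,\infty)$ the limiting scale is absorbed into $\varphi^j$ and the difference dumped into the remainder via Strichartz; when $h_n^j\xi_n^j$ stays bounded the boost is removed by a Galilean transform; and the remainder estimate for a general non-endpoint admissible pair is obtained by interpolating the $L_{t,x}^{\frac{2(d+2)}{d}}$ smallness from the references against the uniform bound $\|e^{it\Delta}w_n^k\|_{L_t^qL_x^r}\lesssim\|w_n^k\|_{L^2}$ coming from \eqref{eq8.9v14}. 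Your Step 3 reproduces exactly these normalizations, so the two arguments agree where they overlap; yours simply also (re)proves the hard core. The one place your sketch is thinner than it should be is the last point above: your Step 2 is phrased for an unspecified admissible pair, but the extraction must be run in a single fixed norm (the symmetric one $L_{t,x}^{\frac{2(d+2)}{d}}$, for which the inverse Strichartz inequality is actually proved), and the vanishing \eqref{eq8.6v14} over the whole range $2<q<\infty$ (resp.\ $4<q<\infty$ for $d=1$) must then be deduced a posteriori by interpolation with the mass bound on $w_n^k$; as written, your stopping criterion does not deliver this uniformly in $(q,r)$.
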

\begin{proof}
We only need to show \eqref{eq8.1v14}, \eqref{eq8.6v14} and \eqref{eq8.8v14}. Other statements in the theorem are stated in the profile decomposition in $L^2(\mathbb{R}^d)$ proved in \cite{Begout-Vargas, Carles-Keraani, Keraani2, Merle-Vega,Tao-Visan-Zhang}. Without loss of generality, we assume that the sequence is up to a subsequence in the following.

To show \eqref{eq8.1v14}, we only need to prove that we may take $h_\infty^j$ and $h_n^j$ to be 1 when $h_\infty^j \in (0, \infty)$. In fact,
if $h_n^j \to h_\infty^j \in (0, \infty), \text{  as }  n\to \infty$, by the Strichartz estimate, we can put $e^{i\theta_n^j} e^{ix\cdot\xi_n^j} e^{-it_n^j \Delta} \left(\frac1{(h_n^j)^\frac{d}2 } \varphi^j \left(\frac{\cdot - x_n^j }{h_n^j}\right)\right)(x)- e^{i\theta_n^j} e^{ix\cdot\xi_n^j} e^{-it_n^j \Delta} \left(\frac1{(h_\infty^j)^\frac{d}2 } \varphi^j
\left(\frac{\cdot - x_n^j }{h_\infty^j}\right)\right)(x)$ into the remainder term. We now shift $\varphi^j(x)$ by
$\frac1{(h_\infty^j)^\frac{d}2} \varphi^j\left(\frac{x}{h_\infty^j}\right)$, and $(\theta_n^j, h_n^j, t_n^j, x_n^j, \xi_n^j)$ by $(\theta_n^j,1, t_n^j, x_n^j, \xi_n^j)$. It is easy to see that \eqref{eq8.4v14}-\eqref{eq8.3v14} are not affected. Thus, we
conclude \eqref{eq8.1v14}.

We now show \eqref{eq8.3v14}.
If $h_n^j \xi_n^j \to \xi^j\in \mathbb{R}^d, \text{ as }  n\to \infty, \text{ for some }  1 \le j \le k$. By the Galilean transform and Strichartz estimate, we can replace
$( \theta_n^j, h_n^j, t_n^j, x_n^j, \xi_n^j)$ by $(\theta_n^j + t_n^j |\xi_n^j|^2 + x_n^j\cdot \xi_n^j, h_n^j, t_n^j, x_n^j + 2t_n^j \xi_n^j, 0)$, and $\varphi^j(x)$ by $e^{ix\cdot\xi^j} \varphi^j(x)$, and we can verify \eqref{eq8.4v14}-\eqref{eq8.3v14} are not affected.
So we can take $\xi_n^j = 0$ when $\underset{n\to \infty} \limsup \, |h_n^j \xi_n^j | < \infty$.

For the profile decomposition in \cite{Begout-Vargas, Carles-Keraani, Merle-Vega}, the remainder $w_n^k$ satisfies
\begin{equation*}
\underset{n\to \infty} \limsup \, \left\|e^{it\Delta} w_n^k\right\|_{L_{t,x}^\frac{2(d+2)}d(\mathbb{R} \times \mathbb{R}^d)} \to 0, \text{ as }  k\to \infty.
 \end{equation*}
By using interpolation, the Strichartz estimate and \eqref{eq8.9v14}, we easily obtain \eqref{eq8.6v14}.
\end{proof}
We can now show the linear profile decomposition in $H^1(\mathbb{R}^d)$, which reveals the defect of compactness of the embedding $e^{it\Delta_{\mathbb{R}^d}}: H_x^1(\mathbb{R}^d)  \hookrightarrow L_{t }^\frac{2(d+2)} d W_x^{s_p, \frac{2(d+2)}d} (\mathbb{R}\times \mathbb{R}^d)$.
\begin{theorem}[Profile decomposition in $H^1(\mathbb{R}^d)$]\label{le5.2}
Let $\{\varphi_n\}_{n\ge 1}$ be a bounded sequence in $H^1(\mathbb{R}^d)$. Then up to passing to a subsequence of $\left\{\varphi_n\right\}_{n\ge 1}$, there exists a sequence of functions $\varphi^j \in L^2(\mathbb{R}^d)$, $w_n^k \in H^1(\mathbb{R}^d)$, and $\left( \theta_n^j, h_n^j, t_n^j, x_n^j, \xi_n^j\right)_{n\ge 1} \subseteq  \mathbb{R}/2\pi\mathbb{Z} \times (0,\infty)\times  \mathbb{R} \times \mathbb{R}^d \times \mathbb{R}^d$, $| \xi_n^j | \le C_j$, with 
  \begin{align*}
\frac{h_n^l}{h_n^j} + \frac{h_n^j}{h_n^l} + \frac{|t_n^j - t_n^l|}{(h_n^j)^2}+ h_n^j | \xi_n^j -\xi_n^l|  + \frac{ \left| x_n^j- x_n^l +  2t_n^j( \xi_n^j -  \xi_n^l) \right| }{h_n^j} \to \infty, \text{ as } n\to \infty,  \forall\, j \ne l,    
\end{align*}
and
\begin{align*}
 &  h_n^j \to h_\infty^j \in \{1, \infty\},\  h_n^j = 1 \text{ if  } h_\infty^j = 1,
 \end{align*}
such that for any $k \in \mathbb{N}$, we have the decomposition 
\begin{equation*}
\begin{split}
\varphi_n(x) = \sum\limits_{j=1}^k T_n^j P_n^j \varphi^j(x) + w_n^k(x),
\end{split}
\end{equation*}
where $T_n^j$ is defined in \eqref{eq4.5v31}, and the projector $P_n^j$ is defined by
\begin{align*}
P_n^j  \varphi^j(x) =
\begin{cases}
\varphi^j(x),                      & \text{ if } h_n^j  \equiv 1,\\
P_{\le (h_n^j)^\theta}  {\varphi}^j, 0 < \theta < 1,  & \text{ if } h_n^j \to \infty.
\end{cases}
\end{align*}
Furthermore, if $h_n^j = 1$, we can take $\xi_n^j = 0$ and $\varphi^j \in H^1$. The remainder $w_n^k$ satisfies
\begin{equation*}
\underset{n\to \infty}{\limsup}\  \left\| \langle \nabla \rangle^{s_p} e^{it\Delta} w_n^k\right\|_{L_t^q L_x^r(\mathbb{R} \times \mathbb{R}^d)} \to 0, \text{ as }  k \to \infty,
\end{equation*}
where $(q,r)$ is $L^2$-admissible, and $2 < q < \infty \text{ when } d\ge 2,\, 4 < q < \infty \text{ when }  d = 1$.
Moreover, we have the following decoupling properties: $\forall \, k\in \mathbb{N}$,
\begin{align*}
& \big\||\nabla|^s \varphi_n\big\|_{L^2}^2 - \sum\limits_{j=1}^k \left\||\nabla|^s \left(T_n^j P_n^j \varphi^j \right) \right\|_{L^2}^2 - \big\||\nabla|^s w_n^k\big\|_{L^2}^2 \to 0,  \ s = 0,1, 
\\
& \mathcal{E}(\varphi_n) - \sum\limits_{j=1}^k \mathcal{E}(T_n^j P_n^j \varphi^j ) - \mathcal{E}(w_n^k) \to 0, 
  \\
& \mathcal{S}_\omega(\varphi_n) - \sum\limits_{j=1}^k \mathcal{S}_\omega(T_n^j P_n^j  \varphi^j) - \mathcal{S}_\omega ( w_n^k) \to 0,
 \\
& \mathcal{K}(\varphi_n) - \sum\limits_{j=1}^k \mathcal{K}(T_n^j P_n^j  \varphi^j) - \mathcal{K}(w_n^k) \to 0, 
\\
& \mathcal{H}_\omega(\varphi_n) - \sum\limits_{j=1}^k \mathcal{H}_\omega( T_n^j P_n^j \varphi^j) - \mathcal{H}_\omega(w_n^k) \to 0, \text{  as } n\to \infty.  
\end{align*}
\end{theorem}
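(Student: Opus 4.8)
The plan is to derive the $H^1$-decomposition from the $L^2$-one of Lemma~\ref{th4.1v7}, using the extra boundedness of $\{\varphi_n\}$ in $H^1$ both to prune the parameters and to truncate the large-scale profiles. First I would view $\{\varphi_n\}$ as a bounded sequence in $L^2$ and apply Lemma~\ref{th4.1v7}: this produces $L^2$-profiles $\varphi^j$, parameters $(\theta_n^j,h_n^j,t_n^j,x_n^j,\xi_n^j)$, the operators $T_n^j$ of \eqref{eq4.5v31}, and $L^2$-remainders $\widetilde w_n^k$, with $h_\infty^j\in\{0,1,\infty\}$ and the normalization \eqref{eq8.3v14}. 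Passing to subsequences is free throughout, with a diagonal extraction over $k$ at the end so that all the limits in $n$ used below exist.

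Pruning: since $\|P_{>N}\varphi_n\|_{L^2}\lesssim N^{-1}\|\varphi_n\|_{H^1}$ uniformly in $n$, while for a profile with $h_\infty^j=0$ or $|\xi_n^j|\to\infty$ the frequency content of $T_n^j\varphi^j$ escapes every fixed ball (so $\|P_{>N}T_n^j\varphi^j\|_{L^2}\to\|\varphi^j\|_{L^2}$ as $n\to\infty$ for each $N$), the $L^2$-orthogonality of the pieces forces $\|\varphi^j\|_{L^2}\lesssim N^{-1}$ for all $N$, i.e.\ $\varphi^j=0$; discarding these profiles leaves $h_\infty^j\in\{1,\infty\}$ and $|\xi_n^j|\le C_j$. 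When $h_n^j\equiv1$ the sequence $(T_n^j)^{-1}\varphi_n$ is bounded in $H^1$ and tends weakly in $L^2$ to $\varphi^j$, so $\varphi^j\in H^1$; absorbing the (bounded) boost $\xi_n^j$ and the time-shift into $\varphi^j$ via the Galilean and Schr\"odinger symmetries, I may take $\xi_n^j=0$. When $h_n^j\to\infty$ I insert the cutoff $P_n^j=P_{\le(h_n^j)^\theta}$: then $T_n^jP_n^j\varphi^j$ is band-limited to frequencies $\lesssim C_j$, with $\|T_n^jP_n^j\varphi^j\|_{\dot H^1}\lesssim_j1$ (the genuinely low-frequency content contributes $O((h_n^j)^{\theta-1})$ and the rest $\lesssim|\xi_n^j|\,\|\varphi^j\|_{L^2}$), while $\|T_n^j(1-P_n^j)\varphi^j\|_{L^2}\to0$. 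I then set $w_n^k:=\widetilde w_n^k+\sum_{j\le k}T_n^j(1-P_n^j)\varphi^j$ (plus the $H^1$-small normalization errors from the previous step), which lies in $H^1$ because $\varphi_n$ and each band-limited piece $T_n^jP_n^j\varphi^j$ do.

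Decoupling and remainder decay: the $L^2$-decoupling is inherited from \eqref{eq8.9v14} using $\|P_n^j\varphi^j-\varphi^j\|_{L^2}\to0$. For the $\dot H^1$-decoupling, writing $w_n^{m-1}=T_n^mP_n^m\varphi^m+w_n^m$, expanding the square and telescoping in $m$, it suffices to show that the cross term $\langle\nabla w_n^m,\nabla T_n^mP_n^m\varphi^m\rangle\to0$ for each $m$; for $h_n^m\equiv1$ this holds because $(T_n^m)^{-1}w_n^m$ is $H^1$-bounded with weak $L^2$-limit $0$, hence weak $H^1$-limit $0$, and $\nabla$ commutes with $T_n^m$, while for $h_n^m\to\infty$ one uses
\[
\langle\nabla w_n^m,\nabla T_n^mP_n^m\varphi^m\rangle=-\big\langle(T_n^m)^{-1}w_n^m,\,(T_n^m)^{-1}\Delta\,T_n^mP_n^m\varphi^m\big\rangle_{L^2},
\]
whose first factor is $L^2$-bounded with weak limit $0$ and whose second factor converges strongly in $L^2$ (the operator $(T_n^m)^{-1}\Delta T_n^m$ is $(h_n^m)^{-2}\Delta+2i(h_n^m)^{-1}\xi_n^m\!\cdot\!\nabla-|\xi_n^m|^2$, so on $P_n^m\varphi^m$ it converges to $-|\xi^m|^2\varphi^m$ since $h_n^m\to\infty$ and $|\xi_n^m|\le C_m$). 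The telescoping also gives $\limsup_n\|w_n^k\|_{H^1}\lesssim1$ uniformly in $k$; combined with $\limsup_n\|e^{it\Delta}w_n^k\|_{L_t^qL_x^r}=\limsup_n\|e^{it\Delta}\widetilde w_n^k\|_{L_t^qL_x^r}\to0$ as $k\to\infty$ and a Littlewood--Paley split at frequency $M$ (low frequencies cost a factor $M^{s_p}$, high frequencies gain $M^{s_p-1}$), this upgrades to $\limsup_n\|\langle\nabla\rangle^{s_p}e^{it\Delta}w_n^k\|_{L_t^qL_x^r}\to0$ as $k\to\infty$. Finally, for the potential-energy terms $\|\varphi_n\|_{L^{p+1}}^{p+1}$ and $\|\varphi_n\|_{L^{2(d+2)/d}}^{2(d+2)/d}$, whose exponents are $H^1$-subcritical and exceed $2$: every profile with $h_n^j\to\infty$ has vanishing $L^q$-norm (scaling gain, since $\theta<1$ and the exponent exceeds $2$), and among the $h_n^j\equiv1$ profiles those with $|t_n^j|\to\infty$ also have vanishing $L^q$-norm (dispersive decay), so only the purely translated profiles $\varphi^j(\cdot-x_n^j)$ contribute; these decouple from one another since $|x_n^j-x_n^l|\to\infty$, and from $w_n^k$ since $(T_n^j)^{-1}w_n^k$ is $H^1$-bounded with weak $L^2$-limit $0$, hence tends to $0$ in $L^q_{\mathrm{loc}}$ by Rellich, while $\varphi^j$ has arbitrarily small $L^q$-tails. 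The decoupling of $\mathcal E,\mathcal S_\omega,\mathcal K,\mathcal H_\omega$ then follows by linearity, and the orthogonality relations among the parameters are inherited verbatim from Lemma~\ref{th4.1v7}.

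The step I expect to be the main obstacle is the large-scale regime $h_n^j\to\infty$. There $\varphi^j$ is only in $L^2$, so $T_n^j\varphi^j$ need not belong to $H^1$ and the truncation $P_n^j$ is forced; one must then check simultaneously that the truncated pieces are $H^1$-bounded uniformly in $n$, that the truncation error is negligible in $L^2$ so it can be absorbed into the remainder, and --- the delicate point --- that the modified remainder still has its $\langle\nabla\rangle^{s_p}e^{it\Delta}(\cdot)$ Strichartz norm tending to $0$, which works only because each $T_n^jP_n^j\varphi^j$ is band-limited and hence sits in $\dot H^{s_p}$ with controlled norm although $\varphi^j$ itself need not. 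Threading this truncation consistently through the $\dot H^1$-decoupling, via the $L^2$-pairing identity displayed above, is the technical heart of the argument.
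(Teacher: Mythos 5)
Your proposal is correct and follows the same route the paper intends: the paper's own ``proof'' is only a two-line interpolation reduction plus a citation of \cite{CGYZ,CGZ}, and the strategy it defers to there --- apply the $L^2$ profile decomposition of Lemma \ref{th4.1v7}, use the $H^1$ bound to kill the profiles with $h_\infty^j=0$ or unbounded $\xi_n^j$, truncate the large-scale profiles by $P_{\le (h_n^j)^\theta}$ and absorb the truncation error into the remainder --- is exactly what you carry out. Your write-up supplies the details (the integration-by-parts identity for the $\dot H^1$ cross terms and the Littlewood--Paley upgrade of the remainder bound) that the paper omits, and I see no gap in them.
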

{ \it Sketch of the proof.}
By interpolation and Strichartz estimate, we have
\begin{align*}
\left\| \langle \nabla\rangle^{s_p} e^{it \Delta} w_n^k \right\|_{L_{t,x}^\frac{2(d+2)}d}
& \lesssim \left\| e^{it \Delta} w_n^k \right\|_{L_{t,x}^\frac{2(d+2)}d}^{1- s_p} \left\| \langle \nabla \rangle e^{it \Delta} w_n^k \right\|_{L_{t,x}^\frac{2(d+2)}d}^{s_p}\\
& \lesssim \left\| e^{it\Delta} w_n^k \right\|_{L_{t,x}^\frac{2(d+2)}{d}}^{1- s_p} \left\|w_n^k \right\|_{H_x^1}^{s_p}.
\end{align*}
Therefore we can reduce the proof to the description the defect of compactness of the embedding $e^{it\Delta}: H_x^1 \hookrightarrow L_{t,x}^\frac{2(d+2)}d$. The argument follows from the proof of \cite{CGYZ,CGZ}.

\section{Extraction of a critical element}\label{se6v24}
In this section, we show the existence of a critical element which is almost periodic modulo spatial translation in $H^1(\mathbb{R}^d)$ by using the linear profile decomposition and the stability theory.

As a consequence of the scattering criterion in Proposition \ref{th4.1}, we can reduce the proof of Theorem \ref{th1.4} to the proof of finiteness of the space-time norm $L_t^\frac{2(d+2)}d W_x^{s_p, \frac{2(d+2)}d}$ of any solution $u$ to \eqref{eq1.1} with $u_0 \in A_{\omega, +}$. For any $m > 0$, let
\begin{equation*}
\Lambda_\omega(m) = \sup \left\| \langle \nabla \rangle^{s_p}  u\right\|_{L_{t,x}^\frac{2(d+2)}d ( \mathbb{R} \times \mathbb{R}^d)} ,
\end{equation*}
where the supremum is taken over the solution $u$ to \eqref{eq1.1} with $u_0 \in A_{\omega, +}$ and $\mathcal{S}_\omega(u_0) \le m$,
and define
\begin{equation} \label{eq5.2}
m_\omega^* = \sup\{ m>0: \Lambda_\omega(m)< \infty\}.
\end{equation}
 If  $u_0\in A_{\omega, +}$ with  $\mathcal{S}_\omega(u_0) \le m$ sufficiently small, then Lemma \ref{le2.8} shows
$\|u\|_{H^1} \ll 1$. Hence, Proposition \ref{th4.1}{(\textit{i})} gives the finiteness of $\Lambda_\omega(m)$, which implies $m_\omega^* > 0$, and our aim is to show $m_\omega^* \ge  m_\omega$.

Suppose by contradiction that $m_\omega^* < m_\omega$, we can then take a sequence $\{u_n\}_{n\ge 1}$ of solutions (up to time translations) to \eqref{eq1.1} such that
\begin{align}
& u_n(t) \in A_{\omega,+}, \text{ for $t\in \mathbb{R}$,}  \  \text{ and } \mathcal{S}_\omega(u_n) \to m_\omega^*, \text{  as $n\to \infty$,} \label{eq5.4}\\
& \underset{n\to \infty} {\lim} \|\langle \nabla \rangle^{s_p}  u_n\|_{L_{t,x}^\frac{2(d+2)}d ([0,\infty) \times \mathbb{R}^d)} = \underset{n\to \infty} {\lim} \| \langle \nabla \rangle^{s_p}   u_n\|_{L_{t,x}^\frac{2(d+2)}d ((- \infty , 0]\times \mathbb{R}^d)} = \infty. \notag
\end{align}
By Lemma \ref{le2.8},
\begin{equation}\label{eq5.6}
\underset{n}{\sup}\,  \left\|u_n\right\|_{L_t^\infty H_x^1(\mathbb{R} \times \mathbb{R}^d)}^2 \lesssim m_\omega + \frac{m_\omega}\omega.
\end{equation}
Applying Theorem \ref{le5.2} to $\left\{u_n(0)\right\}_{n \ge 1}$ and obtain some subsequence of $\left\{u_n(0)\right\}_{n\ge 1}$ (still denoted by the same symbol), then
there exists $ \varphi^j \in L^2(\mathbb{R}^d)$, $w_n^k\in H^1(\mathbb{R}^d)$, and $\left( \theta_n^j, h_n^j, t_n^j, x_n^j, \xi_n^j\right)_{n\ge 1} $ of sequences in $   \mathbb{R}/2\pi\mathbb{Z} \times (0,\infty)\times  \mathbb{R} \times \mathbb{R}^d \times \mathbb{R}^d$, with
  \begin{align}
 &  \tau_n^j :=  - \frac{t_n^j}{(h_n^j)^2} \to \tau_\infty^j \in [-\infty, \infty], \nonumber \\
 &  h_n^j \to h_\infty^j \in \{  1, \infty\},\  h_n^j = 1 \text{ if  } h_\infty^j = 1, \nonumber \\
&  \frac{h_n^l}{h_n^j} + \frac{h_n^j}{h_n^l} + \frac{|t_n^j -t_n^l|}{(h_n^j)^2}+ h_n^j | \xi_n^j -\xi_n^l|  +  \frac{\left| x_n^j- x_n^l + 2t_n^j ( \xi_n^j -  \xi_n^l)
  \right|}{h_n^j}   \to \infty, \ \forall\, j \ne l,  \label{eq6.48v3}
\end{align}
$\text{ as }  n\to \infty$, such that $\forall \,k \in \mathbb{N}$, we have
\begin{align}\label{eq6.51v3}
e^{it\Delta} u_n(0,x) & = \sum\limits_{j=1}^k e^{it\Delta} T_n^j P_n^j \varphi^j(x) + e^{it\Delta} w_n^k(x).
\end{align}
The remainder $w_n^k$ satisfies
\begin{equation}\label{eq6.50v3}
\underset{n\to \infty}{\limsup}\  \left\| \langle \nabla \rangle^{s_p}  e^{it\Delta} w_n^k\right\|_{L_t^q L_x^r(\mathbb{R} \times \mathbb{R}^d)} \to 0, \text{ as }  k \to \infty,
\end{equation}
where $(q,r)$ is $L^2$-admissible, $2< q < \infty$ when $d\ge 2$ and $4 < q < \infty$ when $d = 1$.
Moreover, we have
\begin{align}
& \left\| |\nabla|^s u_n(0)\right\|_{L^2}^2 - \sum\limits_{j=1}^k \left\||\nabla|^s  \left(T_n^j P_n^j \varphi^j \right)  \right\|_{L^2}^2 - \||\nabla|^s w_n^k\|_{L^2}^2 \to 0,\  \forall \, s = 0, 1, \label{eq6.52v3}\\
& \mathcal{S}_\omega(u_n(0)) - \sum\limits_{j=1}^k \mathcal{S}_\omega\Big(T_n^j P_n^j \varphi^j \Big) - \mathcal{S}_\omega ( w_n^k) \to 0,
\notag
\\
& \mathcal{H}_\omega(u_n(0)) - \sum\limits_{j=1}^k \mathcal{H}_\omega\Big(T_n^j P_n^j \varphi^j \Big) - \mathcal{H}_\omega(w_n^k) \to 0, 
\notag
\\
& \mathcal{E}(u_n(0)) - \sum\limits_{j=1}^k \mathcal{E}\Big(T_n^j P_n^j \varphi^j   \Big) - \mathcal{E}(w_n^k) \to 0, \nonumber\\
& \mathcal{K}(u_n(0)) - \sum\limits_{j=1}^k \mathcal{K}\Big(T_n^j P_n^j \varphi^j \Big) - \mathcal{K}(w_n^k) \to 0, \text{  as } n\to \infty.\nonumber
\end{align}
Using Strichartz estimate, \eqref{eq6.52v3} and \eqref{eq5.6},  we get
\begin{equation*}
\underset{k\in \mathbb{N}} \sup \ \underset{n\to \infty} \limsup\, \left\|  e^{it\Delta} w_n^k\right\|_{ S^1(\mathbb{R})}
\lesssim \underset{k \in \mathbb{N}}\sup\   \underset{n \to \infty} \limsup \,\left\|w_n^k\right\|_{H^1} < \infty.
\end{equation*}
Next, we construct the nonlinear profile. Let $u_n^j$ be the solution to \eqref{eq1.1} with $u_n^j(0,x) = T_n^j P_n^j \varphi^j$, then for $h_n^j \to \infty$, as $n\to \infty$, we have the following approximate theorem.
\begin{theorem}[Approximation of the large scale profiles]\label{th6.1v26}
For any $\varphi \in L_x^2$, $0 < \theta < 1$, $( \theta_n, h_n, t_n, x_n, \xi_n)_{n\ge 1} \subseteq  \mathbb{R}/2\pi\mathbb{Z} \times (0,\infty)\times  \mathbb{R} \times \mathbb{R}^d \times \mathbb{R}^d$, $h_n \to \infty$, as $n\to \infty$, $| \xi_n| \lesssim 1$, there is a global solution $u_n$ of \eqref{eq1.1} with $u_n(0,x) = T_n P_n \varphi$, for $n$ large enough satisfying
\begin{align*}
\left\|u_n \right\|_{L_t^\infty H_x^{s_p} \cap L_t^\frac{2(d+2)}d W_x^{s_p, \frac{2(d+2)}d}(\mathbb{R} \times \mathbb{R}^d)} \lesssim_{\|\varphi\|_{L_x^2}} 1.
\end{align*}
Furthermore, assume $\epsilon_1$ is a sufficiently small positive constant depending only on $\left\|\varphi\right\|_{L^2}$, $v_0\in H^1_x$, and
\begin{align*}
\left\|\varphi - v_0 \right\|_{L_x^2} \le \epsilon_1.
\end{align*}
There exists a solution $v \in C_t^0 H_x^1$ to
\begin{align*}
\begin{cases}
i\partial_t v + \Delta v = |v|^\frac4d v ,\\
v(0) = v_0,
\end{cases}
\end{align*}
with
\begin{align*}
 v(0)  = v_0,  & \text{ if } t_n = 0,\\
\lim\limits_{t\to \pm \infty} \left\|v(t) - e^{it \Delta} v_0 \right\|_{L^2}  \to 0,    & \text{ if } - \frac{t_n}{h_n^2} \to \pm \infty,
\end{align*}
such that for any $\epsilon > 0$, it holds that
\begin{align*}
& \left\|u_n (t) - w_n(t) \right\|_{L_t^\infty H_x^{s_p} \cap L_t^\frac{2(d+2)}d W_x^{s_p, \frac{2(d+2)}d}} \lesssim_{\|\varphi\|_{L^2}} \epsilon_1, \\
& \left\|u_n\right\|_{L_t^\infty H_x^{s_p} \cap L_t^\frac{2(d+2)}d W_x^{s_p, \frac{2(d+2)}d}} \lesssim 1,
\end{align*}
for $n$ large enough, where
\begin{align*}
w_n(t,x) =  \frac1{h_n^\frac{d}2} e^{-i t   |\xi_n|^2} e^{i x \xi_n} v\left(\frac{t - t_n }{h_n^2} , \frac{x- x_n - 2\xi_n t }{h_n}\right).
\end{align*}
\end{theorem}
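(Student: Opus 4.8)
The plan is to realize $w_n$ as an \emph{exact} solution of the mass-critical NLS, transported by the scaling/Galilean/space-time-translation symmetry group, and then to transfer its good bounds to $u_n$ through the stability theory of Proposition \ref{pr4.3}. First I would construct $v$. Since the mass-critical nonlinearity in \eqref{eq1.1} carries the defocusing sign, $i\partial_t v+\Delta v=|v|^{4/d}v$ is globally well posed and scatters for arbitrary $v_0\in L^2(\mathbb{R}^d)$, with $\|v\|_{L^{2(d+2)/d}_{t,x}(\mathbb{R}\times\mathbb{R}^d)}\le C(\|v_0\|_{L^2})\le C(\|\varphi\|_{L^2})$ once $\epsilon_1$ is small; since moreover $v_0\in H^1$, persistence of regularity gives $\|v\|_{L^\infty_tH^1_x}+\|\langle\nabla\rangle v\|_{S^0(\mathbb{R})}<\infty$, so in particular $\|\langle\nabla\rangle^{s}v\|_{L^q_tL^r_x}<\infty$ for $0\le s\le1$ and all admissible $(q,r)$. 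When $t_n\equiv0$ I take $v$ with $v(0)=v_0$; when $\tau_n=-t_n/h_n^2\to\pm\infty$ I take instead the solution produced by the wave operator, so that $\|v(\tau)-e^{i\tau\Delta}v_0\|_{L^2}\to0$ as $\tau\to\pm\infty$. A direct computation shows that $w_n$ --- being $v$ acted on by the mass-critical rescaling by $h_n$, the Galilean boost by $\xi_n$, a space-time translation, and (absorbing $e^{i\theta_n}$, which affects none of the norms below) a phase --- solves $i\partial_t w_n+\Delta w_n=|w_n|^{4/d}w_n$, hence \eqref{eq1.1} with error $e_n:=|w_n|^{p-1}w_n$.

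Next I would record the quantitative bounds. The norm $\|\cdot\|_{L^{2(d+2)/d}_{t,x}}$ is invariant under the mass-critical rescaling and the boost, so $\|w_n\|_{L^{2(d+2)/d}_{t,x}}=\|v\|_{L^{2(d+2)/d}_{t,x}}\lesssim_{\|\varphi\|_{L^2}}1$; adjoining a $|\nabla|^{s_p}$ produces a gain $h_n^{-s_p}$ against a finite Strichartz norm of $v$, so $\|\langle\nabla\rangle^{s_p}w_n\|_{L^{2(d+2)/d}_{t,x}}\lesssim_{\|\varphi\|_{L^2}}1$ for $n$ large, and $\|w_n\|_{L^\infty_tH^1_x}\lesssim|\xi_n|\|v_0\|_{L^2}+h_n^{-1}\|v\|_{L^\infty_t\dot H^1_x}\lesssim_{\|\varphi\|_{L^2}}1$ using $|\xi_n|\lesssim1$, $h_n\to\infty$. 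For the error I would estimate $\langle\nabla\rangle^{s_p}(|w_n|^{p-1}w_n)$ in $L^{2(d+2)/(d+4)}_{t,x}$ by spacetime H\"older and fractional Leibniz: peel off the mass-critical factor $|w_n|^{4/d}w_n$ (which lies in $L^{2(d+2)/(d+4)}_{t,x}$ with norm $\le\|v\|_{L^{2(d+2)/d}_{t,x}}^{1+4/d}$), estimate the surplus power $|w_n|^{p-1-4/d}$ together with one derivative-carrying copy in subcritical Strichartz/Sobolev norms of $v$ --- all finite because $v_0\in H^1$ and $p$ is energy-subcritical --- and collect the power of $h_n$, which by scaling is forced to equal $h_n^{-\kappa}$ with $\kappa=\tfrac{d(p-1)-4}{2}>0$ since $p>1+\tfrac4d$. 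Thus
\begin{equation*}
\big\|\langle\nabla\rangle^{s_p}e_n\big\|_{L^{2(d+2)/(d+4)}_{t,x}(\mathbb{R}\times\mathbb{R}^d)}\;\lesssim\; h_n^{-\kappa}\,C\big(\|v\|_{H^1},\,\|\langle\nabla\rangle v\|_{S^0(\mathbb{R})}\big)\;\longrightarrow\;0\qquad(n\to\infty).
\end{equation*}

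Finally I would match the data and invoke Proposition \ref{pr4.3}. Using $e^{-it_n\Delta}\big[h_n^{-d/2}\psi((\cdot-x_n)/h_n)\big]=h_n^{-d/2}\big[e^{i\tau_n\Delta}\psi\big]((\cdot-x_n)/h_n)$, both $u_n(0)=T_nP_n\varphi$ and $w_n(0)$ are the common rescaling/modulation/translation of $e^{i\tau_n\Delta}P_{\le h_n^\theta}\varphi$ and of $v(\tau_n)$ respectively. Their $L^2$-difference converges to $\|\varphi-v_0\|_{L^2}\le\epsilon_1$ --- when $t_n=0$ because $P_{\le h_n^\theta}\varphi\to\varphi$, when $\tau_n\to\pm\infty$ because $\|v(\tau_n)-e^{i\tau_n\Delta}v_0\|_{L^2}\to0$ and $e^{i\tau_n\Delta}$ is unitary --- while the $\dot H^{s_p}$-part, after the rescaling, picks up a factor $h_n^{-s_p}$ against $\|v_0\|_{\dot H^{s_p}}$, $\|v(\tau_n)\|_{\dot H^{s_p}}$ and $\||\nabla|^{s_p}P_{\le h_n^\theta}\varphi\|_{L^2}\lesssim h_n^{\theta s_p}\|\varphi\|_{L^2}$, so since $\theta<1$ the net contribution is $h_n^{-(1-\theta)s_p}\|\varphi\|_{L^2}\to0$ --- precisely the reason the truncation $P_n^j$ was built into Theorem \ref{le5.2}. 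Hence $\|u_n(0)-w_n(0)\|_{H^{s_p}}\le2\epsilon_1$ for $n$ large, and Strichartz bounds the free-evolution term in \eqref{eq3.7v37} by the same quantity. Choosing $\epsilon_1=\epsilon_1(\|\varphi\|_{L^2})$ small enough that $2\epsilon_1$ stays below the stability threshold $\delta_1(A_1,A_2,B)$ with $A_1,B\lesssim_{\|\varphi\|_{L^2}}1$ and $A_2=2\epsilon_1$, and combining with the error bound above, Proposition \ref{pr4.3} applies on every compact interval; the uniform-in-$n$-and-$I$ bounds let the interval exhaust $\mathbb{R}$, producing the global $u_n$ with $u_n(0)=T_nP_n\varphi$ and the asserted estimates $\|u_n-w_n\|_{L^\infty_tH^{s_p}\cap L^{2(d+2)/d}_tW^{s_p,2(d+2)/d}_x}\lesssim_{\|\varphi\|_{L^2}}\epsilon_1$ and $\|u_n\|_{L^\infty_tH^{s_p}\cap L^{2(d+2)/d}_tW^{s_p,2(d+2)/d}_x}\lesssim_{\|\varphi\|_{L^2}}1$.

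The main obstacle is the error estimate: for $d=3,4$ the naive diagonal H\"older split sends $w_n$ into an $L^q_{t,x}$ whose exponent can exceed the Sobolev endpoint $\tfrac{2d}{d-2}$, so $|w_n|^{p-1}$ cannot be absorbed by $\|v\|_{H^1}$ alone, and one must split off the mass-critical factor and interpolate carefully among the finitely many subcritical Strichartz norms of $v$ while keeping the net power of $h_n$ negative; the energy-subcriticality $p<1+\tfrac4{d-2}$ enters exactly here, and, as the introduction notes, it is this step that obstructs an extension to $d\ge5$, where controlling the mass-critical term even in the weaker $H^{s_p}$-topology becomes problematic. The remaining points --- bookkeeping the phases $\theta_n$ and the three regimes $\tau_\infty\in\{0,\pm\infty\}$, and upgrading the compact-interval output of Proposition \ref{pr4.3} to a genuinely global solution --- are routine.
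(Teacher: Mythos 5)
Your proposal is correct and follows essentially the same route as the paper: realize $w_n$ as a symmetry-transformed exact solution of the defocusing mass-critical NLS (global and scattering by Dodson et al., with $S^1$ bounds on $v$ from persistence of regularity), compute the error $e_n=|w_n|^{p-1}w_n$, estimate $\|\langle\nabla\rangle^{s_p}e_n\|_{L^{2(d+2)/(d+4)}_{t,x}}$ by H\"older and the fractional chain rule to extract the negative power $h_n^{-(d(p-1)-4)/2}$, and conclude via the stability theory of Proposition \ref{pr4.3}. You in fact supply more detail than the paper's sketch on the $H^{s_p}$ data-matching and the role of the truncation $P_{\le h_n^{\theta}}$, which is where the paper is most terse.
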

\begin{proof}
We just give a sketch of the proof. Without loss of generality, we may assume that $x_n = 0$, using a Galilean transform and the fact that $\xi_n$ is bounded, we may assume that $\xi_n = 0$ for all $n$.

When $t_n = 0$, we can show $w_n$ is an approximate solution to the equation \eqref{eq1.1} in the sense of the stability theory in Proposition \ref{pr4.3}, and we only need to verify the latter estimate in \eqref{eq3.7v37}.
We can see
\begin{align*}
e_{n} := \left( i \partial_t + \Delta\right) w_n - |w_n|^\frac4d w_n +  |w_n|^{p-1} w_n = (h_n)^{- \frac{dp}2} \left( |v|^{p-1} v\right)\left(\frac{t}{h_n^2}, \frac{x}{h_n}\right),
\end{align*}
then by the fractional chain rule and H\"older's inequality, we have
\begin{align*}
\left\|\langle \nabla \rangle^{s_p} e_{n} \right\|_{L_{t,x}^\frac{2(d+2)}{d+4}}
\lesssim  & \  h_n^\frac{d+4 - dp}2  \|  v\|_{L_{t,x}^\frac{(d+2)(p-1)}{2}}^{p-1} \|   v\|_{L_{t,x}^\frac{2(d+2)}{d}}\\
  &   + h_n^{\frac{d+4 - dp}2 - s_p}  \| v\|_{L_{t,x}^\frac{(d+2)(p-1)}{2}}^{p-1}  \left\| \left|\nabla\right|^{s_p}  v \right\|_{L_{t,x}^\frac{2(d+2)}{d}},
\end{align*}
which is small for $n$ large enough, because $h_n\to \infty$, as $n\to \infty$ and $\|v\|_{S^1(\mathbb{R})} \le   C(\|v_0\|_{H^1})$, which is a consequence of persistence of regularity and the scattering result of the mass-critical nonlinear Schr\"odinger equation \cite{D1,D2,D3,Killip-Tao-Visan, Killip-Visan-Zhang,Tao-Visan-Zhang2,Tao-Visan-Zhang3}.

When $- \frac{t_n}{h_n^2}  \to 0$, as $n \to \infty$, $v$ is a solution of the mass-critical nonlinear Schr\"odinger equation with
$ \left\|v(t) - e^{it \Delta} v_0 \right\|_{L^2}  \to 0$, as $t \to \infty$. By the argument in the previous case, we can also obtain the result. Similarly, we can obtain the result when $\frac{t_n}{h_n^2} \to 0$, as $n\to \infty$.
\end{proof}
For the linear profile decomposition \eqref{eq6.51v3},
we can give the corresponding nonlinear profile decomposition
\begin{equation*} 
u_n^{< k}(t) = \sum\limits_{j=1}^k u_n^j(t),
\end{equation*}
where $u_n^j$ is the solution of
\begin{equation*}
\begin{cases}
( i\partial_t  + \Delta) u_n^j    = |u_n^j|^\frac4d u_n^j - |u_n^j|^{p-1} u_n^j,\\
    u_n^j(0,x) = T_n^j P_n^j \varphi^j,
\end{cases}
\end{equation*}
we will show $u_n^{< k} + e^{it\Delta} w_n^k$ is a good approximation for $u_n$ when $n$ large enough provided that each
nonlinear profile has finite global Strichartz norm, which is the key to show the existence of the critical element in the following proposition.
\begin{proposition}[Existence of the critical element] \label{pr5.8}
Suppose $m_\omega^* < m_\omega$, then there exist a global solution $u_c\in C(\mathbb{R}, H^1(\mathbb{R}^d))$ to \eqref{eq1.1} such that
\begin{align}
& u_c(t) \in A_{\omega, +}  \text{ and } \mathcal{S}_\omega(u_c(t)) =  m_\omega^*,\text{    for }  t\in \mathbb{R}, 
\notag
\\
& \| \langle \nabla \rangle^{s_p} u_c\|_{   L_{t,x}^\frac{2(d+2)}d([0,\infty)\times \mathbb{R}^d)} = \| \langle \nabla \rangle^{s_p} u_c\|_{L_{t,x}^\frac{2(d+2)}d((-\infty,0]\times \mathbb{R}^d)} = \infty. \label{eq6.161v3}
\end{align}
\end{proposition}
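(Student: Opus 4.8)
The plan is to run the standard Kenig--Merle concentration-compactness argument. Suppose $m_\omega^* < m_\omega$ and take the sequence $\{u_n\}$ as in \eqref{eq5.4}. Apply the linear profile decomposition (Theorem \ref{le5.2}) to $\{u_n(0)\}$ and, by the decoupling of $\mathcal{S}_\omega$, $\mathcal{H}_\omega$, $\mathcal{K}$ together with Proposition \ref{le2.4} and the positivity of $\mathcal{H}_\omega$, show that every nonzero profile $T_n^j P_n^j\varphi^j$ (more precisely, the associated nonlinear evolution) lives strictly below the threshold: $\limsup_n\mathcal{S}_\omega(T_n^jP_n^j\varphi^j)\le m_\omega^* < m_\omega$ and $\mathcal{K}\ge 0$ along each profile, so each nonlinear profile $u_n^j$ stays in $A_{\omega,+}$ with $\mathcal{S}_\omega$-level at most $m_\omega^*$.

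First I would dispose of the dichotomy ``one profile carries everything'' versus ``genuine splitting.'' If there are at least two nonzero profiles, or one profile with nontrivial remainder, strict subadditivity of $\mathcal{S}_\omega$ (via $\mathcal{H}_\omega \ge 0$ and the variational characterization) forces each profile to have $\mathcal{S}_\omega$-level $\le m_\omega^* - \eta$ for some $\eta>0$; by definition of $m_\omega^*$ each nonlinear profile $u_n^j$ then has finite global $L_{t,x}^{\frac{2(d+2)}d}W_x^{s_p,\cdot}$ norm. For the large-scale profiles ($h_n^j\to\infty$) this finiteness is supplied instead by Theorem \ref{th6.1v26}, which approximates $u_n^j$ by a rescaled mass-critical solution whose scattering norm is finite by the mass-critical scattering theory. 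Next I would feed the finite-norm nonlinear profiles and the asymptotically vanishing linear remainder $e^{it\Delta}w_n^k$ into the stability theory (Proposition \ref{pr4.3}), with error term coming from the cross-terms in the nonlinearities and the mismatch between $\sum_j u_n^j + e^{it\Delta}w_n^k$ and the true solution $u_n$; the orthogonality relations \eqref{eq6.48v3} (and the $L^2$ and $\dot H^1$ decoupling, plus the $L_{t,x}^{(d+2)/d}$ decay of products of distinct linear profiles from Lemma \ref{th4.1v7}) make this error small. This yields $\limsup_n\|\langle\nabla\rangle^{s_p}u_n\|_{L_{t,x}^{2(d+2)/d}}<\infty$, contradicting \eqref{eq5.4}. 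Hence there is exactly one profile, no remainder, $h_n\equiv 1$, and $\xi_n=0$; that profile's nonlinear evolution $u_c$ satisfies $\mathcal{S}_\omega(u_c(0))=m_\omega^*$, $u_c(0)\in A_{\omega,+}$, and blows up in the $S^{s_p}$ norm in both time directions, i.e.\ \eqref{eq6.161v3}.

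It remains to upgrade $\mathcal{S}_\omega(u_c(t))=m_\omega^*$ and $u_c(t)\in A_{\omega,+}$ to \emph{all} $t\in\mathbb{R}$: conservation of $\mathcal{S}_\omega$ gives the value for all $t$, and the energy-trapping Proposition \ref{le2.9} (since $m_\omega^*<m_\omega$ keeps $\mathcal{K}\ge 0$ propagating) keeps $u_c(t)\in A_{\omega,+}$; moreover Lemma \ref{le2.8} gives the uniform $H^1$ bound so $u_c\in C(\mathbb{R},H^1)$. The global existence of $u_c$ follows because the $A_{\omega,+}$ trapping precludes blow-up (as in the local theory, Proposition \ref{th4.1}(i)). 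If instead $u_c$ did scatter, the small-data / stability theory would give a finite global $S^{s_p}$ norm, contradicting the blow-up of the minimizing sequence; so \eqref{eq6.161v3} holds.

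The main obstacle is the stability/approximation step: because the defocusing nonlinearity is mass-critical, one cannot close the perturbation argument purely in $H^1$, and the whole scheme is forced to work in the weaker space $L_t^\infty H_x^{s_p}\cap L_t^{2(d+2)/d}W_x^{s_p,2(d+2)/d}$. Controlling the mass-critical term and the cross terms in this fractional space — in particular matching the large-scale profiles to genuine mass-critical solutions via Theorem \ref{th6.1v26} and verifying that all error contributions obey the smallness hypothesis \eqref{eq3.7v37} of Proposition \ref{pr4.3} uniformly in $n$ — is where the real work lies; the rest is the by-now-standard bookkeeping of profile decomposition arguments.
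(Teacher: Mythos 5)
Your proposal is correct and follows essentially the same route as the paper: linear profile decomposition of $u_n(0)$, decoupling of $\mathcal{M}$, $\mathcal{E}$, $\mathcal{S}_\omega$, $\mathcal{K}$, $\mathcal{H}_\omega$ together with Proposition \ref{le2.4} to place each profile in $A_{\omega,+}$ with nonnegative $\mathcal{S}_\omega$, the dichotomy between a single dominant profile and genuine splitting, Theorem \ref{th6.1v26} to kill the large-scale ($h_n\to\infty$) and the $t_n\to\pm\infty$ scenarios, and the stability theory (Proposition \ref{pr4.3}) fed by the orthogonality relations to derive a contradiction in the splitting case. The only cosmetic difference is that you invoke conservation of $\mathcal{S}_\omega$ and energy trapping explicitly at the end, which the paper leaves implicit.
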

\begin{proof}
We can assume $t_n = 0$ by replacing $u_n(t)$ with $u_n(t+t_n)$. Applying the linear profile decomposition to $\left\{u_n(0,x)\right\}_n$, we have, after passing to a subsequence,
\begin{align*}
u_n(0,x) = \sum\limits_{j = 1}^k T_n^j P_n^j \varphi^j(x) + w_n^k(x).
\end{align*}
The remainder has asymptotically trivial linear evolution
\begin{align*}
\limsup \limits_{n\to \infty} \left\| \langle \nabla \rangle^{s_p} e^{it\Delta} w_n^k \right\|_{L_{t,x}^\frac{2(d+2)}d} \to 0, \text{ as } k \to \infty,
\end{align*}
and we also have asymptotic decoupling of the mass, energy and other functionals:
\begin{align}
\mathcal{M}(u_n(0)) - \sum\limits_{j=1}^k \mathcal{M}(T_n^j P_n^j \varphi^j ) - \mathcal{ M}(w_n^k) \to 0, \nonumber \\
\mathcal{E}(u_n(0)) - \sum\limits_{j=1}^k  \mathcal{E}(T_n^j P_n^j \varphi^j ) - \mathcal{E}(w_n^k) \to 0, \nonumber \\
\mathcal{S}_\omega(u_n(0)) - \sum\limits_{j=1}^k \mathcal{S}_\omega(T_n^j P_n^j \varphi^j ) - \mathcal{S}_\omega(w_n^k) \to 0, \nonumber \\
\mathcal{K}(u_n(0)) - \sum\limits_{j=1}^k  \mathcal{K}(T_n^j P_n^j \varphi^j ) - \mathcal{ K}(w_n^k) \to 0, \nonumber \\
\mathcal{H}_\omega(u_n(0)) - \sum\limits_{j=1}^k \mathcal{H}_\omega(T_n^j P_n^j \varphi^j ) - \mathcal{ H}_\omega(w_n^k) \to 0,\label{eq6.170v3}
\end{align}
as $n\to \infty$.
 Since $\mathcal{K}(u_n(0)) > 0$, we have $\mathcal{H}_\omega(u_n(0)) \le \mathcal{S}_\omega(u_n(0))$  by \eqref{eq2.3new}. It follows from \eqref{eq6.170v3} and \eqref{eq5.4} that
\begin{equation*}
\mathcal{H}_\omega( T_n^j P_n^j \varphi^j),\  \mathcal{H}_\omega(w_n^J) \le \mathcal{S}_\omega(u_n(0)) < \frac{m_\omega + m_\omega^*}2,
\end{equation*}
for $1\le j \le J$ and $n$ large enough. By \eqref{eq2.4}, we have
\begin{equation*}
\mathcal{K}\left(T_n^j P_n^j \varphi^j\right) > 0,\  \mathcal{K}(w_n^J) > 0, \text{  for } 1\le j \le J \text{ and $n$ large enough,}
\end{equation*}
which together with Lemma \ref{le2.6} shows
\begin{align*}
 \mathcal{S}_\omega\left( T_n^j P_n^j   \varphi^j\right) \ge  0, \ \mathcal{S}_\omega( w_n^J ) \ge 0,
\end{align*}
for $ 1 \le j \le J$ and $n$ large enough. There are two possibilities:

{\it Case 1.} $\sup\limits_{j } \limsup\limits_{n\to \infty} \mathcal{S}_\omega\left( T_n^j P_n^j \varphi^j\right) = m_\omega^*$. We deduce that
\begin{align*}
u_n(0,x) =   e^{i\theta_n } e^{ix \xi_n } e^{-it_n  \Delta} \left( \frac1{h_n^\frac{d}2} (P_n  \varphi )\left( \frac{\cdot - x_n }{h_n }\right) \right) (x) + w_n (x),
\end{align*}
with $\lim\limits_{n\to \infty} \left\|w_n\right\|_{H^1} = 0$. If $h_n \to \infty$, by Theorem \ref{th6.1v26}, there exists a unique global solution $u_n$ for $n$ large enough with
\begin{align*}
\limsup\limits_{n\to \infty} \left\| \langle \nabla \rangle^{s_p} u_n\right\|_{L_{t,x}^\frac{2(d+2)}d(\mathbb{R}\times \mathbb{R}^d)} \le C(m_\omega^*),
\end{align*}
which is a contradiction with \eqref{eq6.161v3}. Therefore, $h_n = 1$, and
\begin{align}\label{eq6.183v3}
u_n(0,x) =   \varphi  \left(  {x - x_n } \right)  + w_n (x),
\end{align}
 this is precisely the conclusion. If $t_n \to  \infty$, by the Galilean transform,
we observe
\begin{align*}
& \left\| \langle \nabla \rangle^{s_p}  e^{it\Delta} ( e^{i \theta_n} e^{ix \xi_n} e^{-it_n \Delta}   \varphi ( x - x_n) \right\|_{L_{t,x}^\frac{2(d+2)}d((-\infty, 0) \times \mathbb{R}^d)}\\
\lesssim  & \ \langle \xi_n\rangle^{s_p}   \left\|    e^{it \Delta}      \varphi ( x - x_n)   \right\|_{L_{t,x}^\frac{2(d+2)}d((-\infty, - t_n) \times \mathbb{R}^d)} \to 0 , \text{ as } n \to \infty.
\end{align*}
As a consequence of the local well-posedness, we see for $n$ large enough,
\begin{align*}
\|\langle \nabla \rangle^{s_p} u_n\|_{L_{t,x}^\frac{2(d+2)}d ((-\infty, 0) \times \mathbb{R}^d)} \le 2 \delta_0,
\end{align*}
which contradicts \eqref{eq6.161v3}. The case $t_n \to - \infty$ is similar.

{\it Case 2.} $\sup\limits_j \limsup\limits_{n\to \infty} \mathcal{S}_\omega( T_n^j P_n^j \varphi^j) \le m_\omega^* - 2 \delta$ for some $\delta >0$. By the definition of $m_\omega^*$, the nonlinear profile $u_n^j$ satisfies $\left\| \langle \nabla \rangle^{s_p} u_n^j \right\|_{L_{t,x}^\frac{2(d+2)}d(\mathbb{R}\times \mathbb{R}^d)} \lesssim \Lambda_\omega\left(m_\omega^* - \delta\right) < \infty$. We can then deduce that
\begin{align}\label{eq5.28v15}
\left\|\langle \nabla \rangle^{s_p} u_n^j \right\|^2_{L_{t,x}^\frac{2(d+2)}d(\mathbb{R}\times \mathbb{R}^d)} \lesssim_{m_\omega^*, \delta} \mathcal{S}_\omega (
T_n^j P_n^j \varphi^j ) .
\end{align}
We now claim for sufficiently large $k$ and $n$, $u_n^{< k} + e^{it\Delta} w_n^k$ is an approximate solution to $u_n$ in the sense of the stability theory. Then we have the finiteness of the $L_t^\frac{2(d+2)}d W_x^{s_p, \frac{2(d+2)}d}$ norm of $u_n$, which contradicts with \eqref{eq6.161v3}. To verify the claim, we only need to check
\begin{align}
& \limsup\limits_{n\to \infty} \left\|u_n^{< k} + e^{it\Delta} w_n^k \right\|_{L_t^\frac{2(d+2)}d W_x^{s_p, \frac{2(d+2)}d}} \lesssim_{m_\omega^*, \delta} 1, \text{ uniformly in $k$}, \label{eq5.28v14}\\
& \limsup\limits_{n\to \infty} \left\|e_n^k \right\|_{L_t^\frac{2(d+2)}d W_x^{s_p, \frac{2(d+2)}d}} \to 0, \text{ as } k \to \infty, \label{eq5.29v14}
\end{align}
where
\begin{align}
e_n^k &  := \left(i\partial_t + \Delta\right) \left( u_n^{< k} + e^{it\Delta} w_n^k\right) \nonumber \\
& \quad - \left| u_n^{< k}
+ e^{it\Delta} w_n^k \right|^\frac4d \left( u_n^{< k} + e^{it\Delta} w_n^k \right) + \left| u_n^{< k} + e^{it\Delta} w_n^k\right|^{p-1} \left( u_n^{< k} + e^{it\Delta} w_n^k\right) \nonumber \\
& = \sum\limits_{ j = 1}^k   \left|u_n^j\right|^\frac4d u_n^j  - \left| \sum\limits_{j = 1}^k u_n^j \right|^\frac4d \sum\limits_{j = 1}^k u_n^j  - \sum\limits_{j = 1}^k \left|u_n^j\right|^{p-1} u_n^j +  \left| \sum\limits_{j=1}^k u_n^j \right|^{p-1} \sum\limits_{j =1 }^k u_n^j \label{eq5.23v34}\\
&\quad  + \left|u_n^{< k} \right|^\frac4d u_n^{< k} - \left| u_n^{< k} + e^{it\Delta} w_n^k \right|^\frac4d \left( u_n^{< k} + e^{it\Delta} w_n^k\right) \nonumber \\
& \quad - \left|u_n^{< k}\right|^{p-1} u^{< k}_n + \left| u_n^{< k} + e^{it\Delta} w_n^k\right|^{p-1} \left( u_n^{< k} + e^{it\Delta} w_n^k\right).\label{eq5.24v34}
\end{align}
The verification of \eqref{eq5.28v14} relies on the following lemma, which can be proved by using Theorem \ref{th6.1v26} and the orthogonality relation \eqref{eq6.48v3}. We refer to \cite{CGZ} and will not prove it.
\begin{lemma}[Decoupling of the nonlinear profiles]\label{le5.3v15}
For $j\ne l$,
\begin{align*}
\left\|\langle \nabla \rangle^{s_p}  u_n^j \cdot \langle \nabla \rangle^{s_p} u_n^l \right\|_{L_{t,x}^\frac{d+2}d}
+ \left\|   u_n^j \cdot \langle \nabla \rangle^{s_p} u_n^l \right\|_{L_{t,x}^\frac{2(d+2)(p-1)}{d(p-1)+4}}
 \to 0, \text{ as } n\to \infty.
\end{align*}
\end{lemma}
We now verify \eqref{eq5.28v14}. By \eqref{eq5.28v15} and Lemma \ref{le5.3v15}, we have for $k$ large enough,
\begin{align*}
& \ \left\| \langle \nabla \rangle^{s_p}  \left( \sum\limits_{j=1}^k u_n^j \right)  \right \|_{L_{t,x}^\frac{2(d+2)}d}^\frac{2(d+2)}d\\
& \lesssim \left( \sum\limits_{j=1}^k  \left\| \langle \nabla \rangle^{s_p} u_n^j \right\|_{L_{t,x}^\frac{2(d+2)}d}^2 + \sum\limits_{j \ne l} \left\| \langle \nabla \rangle^{s_p}  u_n^j \cdot \langle \nabla \rangle^{s_p}  u_n^{l} \right\|_{L_{t,x}^\frac{d+2}d} \right)^\frac{d+2}d\\
& \lesssim \left( \sum\limits_{j = 1}^k \mathcal{S}_\omega\left( T_n^j P_n^j \varphi^j\right)+ o_k(1) \right)^\frac{d+2}d.
\end{align*}
 The decoupling of $\mathcal{S}_\omega$ implies
\begin{align*}
\sum\limits_{j= 1}^k \mathcal{S}_\omega( T_n^j P_n^j \varphi^j) \le m_\omega^*,
\end{align*}
together with \eqref{eq6.50v3}, we obtain \eqref{eq5.28v14}. It remains to check \eqref{eq5.29v14}, first we consider \eqref{eq5.23v34}.
By the fractional chain rule, we have
\begin{align*}
& \left\| \sum\limits_{ j = 1}^k   \left|u_n^j\right|^\frac4d u_n^j  - \left| \sum\limits_{j = 1}^k u_n^j \right|^\frac4d \sum\limits_{j = 1}^k u_n^j \right\|_{L_{t }^\frac{2(d+2)}d W_x^{s_p, \frac{2(d+2)}d }} \\
 & \quad + \left\|   \sum\limits_{j = 1}^k \left|u_n^j\right|^{p-1} u_n^j -  \left| \sum\limits_{j=1}^k u_n^j \right|^{p-1} \sum\limits_{j =1 }^k u_n^j \right\|_{L_{t }^\frac{2(d+2)}d W_x^{s_p, \frac{2(d+2)}d }}\\
 \lesssim &  \sum\limits_{ j \ne l } \left( \left\| \left| u_n^j\right|^\frac4d   \cdot  \langle \nabla \rangle^{s_p} u_n^{l} \right\|_{L_{t,x}^\frac{2(d+2)}{d+4}} +  \left\| \left|u_n^{l} \right| \left|u_n^j \right|^{\frac4d - 1}  \left| \langle \nabla \rangle^{s_p} u_n^j\right| \right\|_{L_{t,x}^\frac{2(d+2)}{d+4}} \right)  \\
&\quad  +  \left(\left\|  \left| u_n^j\right|^{p-1}   \cdot  \langle \nabla \rangle^{s_p} u_n^{l} \right\|_{L_{t,x}^\frac{2(d+2)}{d+4}} +  \left\| \left|u_n^{l} \right| \left|u_n^j \right|^{p-2}  \left| \langle \nabla \rangle^{s_p} u_n^j \right| \right\|_{L_{t,x}^\frac{2(d+2)}{d+4}}
 \right)\\
 \lesssim & \sum\limits_{j \ne l} \left\|u_n^j \cdot \langle \nabla \rangle^{s_p} u_n^{l} \right\|_{L_{t,x}^\frac{d+2}d} \left( \left\|u_n^j \right\|_{L_{t,x}^\frac{2(d+2)}d}^{\frac4d - 1}  + \left\|u_n^{l} \right\|_{L_{t,x}^\frac{2(d+2)}d}^{\frac4d - 1}\right)\\
 & \quad  +   \sum\limits_{j \ne l} \left\|u_n^j \cdot \langle \nabla \rangle^{s_p} u_n^{l} \right\|_{L_{t,x}^\frac{2(d+2)(p-1)}{d(p-1) + 4}} \left( \left\|u_n^j \right\|_{L_{t,x}^\frac{(d+2)(p-1)}2}^{p - 2} +  \left\|u_n^{l} \right\|_{L_{t,x}^\frac{(d+2)(p-1)}2}^{p - 2} \right)\\
  \lesssim&_{k, m_\omega^*, \delta} o(1), \text{ as } n \to \infty.
\end{align*}
We now consider \eqref{eq5.24v34}, by the fractional chain rule, the H\"older inequality,
\begin{align*}
 &  \left\| \left|u_n^{< k} \right|^\frac4d u_n^{< k} - \left| u_n^{< k} + e^{it\Delta} w_n^k \right|^\frac4d \left( u_n^{< k} + e^{it\Delta} w_n^k\right) \right\|_{L_{t }^\frac{2(d+2)}d W_x^{s_p, \frac{2(d+2)}d }}\\
 & +   \left\|   \left|u_n^{< k}\right|^{p-1} u^{< k}_n - \left| u_n^{< k} + e^{it\Delta} w_n^k\right|^{p-1} \left( u_n^{< k} + e^{it\Delta} w_n^k\right) \right\|_{L_{t }^\frac{2(d+2)}d W_x^{s_p, \frac{2(d+2)}d }}\\
\lesssim & \left\|\langle \nabla \rangle^{s_p} e^{it\Delta} w_n^k \right\|_{L_{t,x}^\frac{2(d+2)}d} \left\|e^{it\Delta} w_n^k \right\|_{L_{t,x}^\frac{(d+2)(p-1)}2}^{p-1}
+ \left\| \langle \nabla \rangle^{s_p} u_n^{< k} \right\|_{L_{t,x}^\frac{2(d+2)}d} \left\|e^{it\Delta} w_n^k \right\|_{L_{t,x}^\frac{(d+2)(p-1)}2}^{p-1} \\
& + \left\| \langle \nabla\rangle^{s_p} u_n^{<k} \right\|_{L_{t,x}^\frac{2(d+2)}d} \left\|e^{it\Delta} w_n^k \right\|_{L_{t,x}^\frac{(d+2)(p-1)}2} \left\|u_n^{<k} \right\|_{L_{t,x}^\frac{(d+2)(p-1)}2}^{p-2}\\
  & + \left\| u_n^{<k}\right\|_{L_{t,x}^\frac{(d+2)(p-1)}2}^{p-1} \left\|  \langle \nabla \rangle^{s_p} e^{it\Delta} w_n^k \right\|_{L_{t,x}^\frac{2(d+2)}{d}}\\
 & +  \left\|\langle \nabla \rangle^{s_p} e^{it\Delta} w_n^k \right\|_{L_{t,x}^\frac{2(d+2)}d} \left\|e^{it\Delta} w_n^k \right\|_{L_{t,x}^\frac{2(d+2) }d}^{\frac4d}
+ \left\| \langle \nabla \rangle^{s_p} u_n^{< k} \right\|_{L_{t,x}^\frac{2(d+2)}d} \left\|e^{it\Delta} w_n^k \right\|_{L_{t,x}^\frac{2(d+2)}d}^{\frac4d} \\
& + \left\| \langle \nabla\rangle^{s_p} u_n^{<k} \right\|_{L_{t,x}^\frac{2(d+2)}d} \left\|e^{it\Delta} w_n^k \right\|_{L_{t,x}^\frac{2(d+2)}d} \left\|u_n^{<k} \right\|_{L_{t,x}^\frac{2(d+2)}d}^{\frac4d-1} \\
& + \left\| u_n^{<k} \right\|_{L_{t,x}^\frac{2(d+2)}d}^{\frac4d} \left\| \langle \nabla \rangle^{s_p} e^{it\Delta} w_n^k \right\|_{L_{t,x}^\frac{2(d+2)}{d}}.
\end{align*}
Using \eqref{eq6.50v3} and \eqref{eq5.28v14}, we obtain the desired estimate.
\end{proof}
We now show the trajectory of the critical element is precompact in $H^1(\mathbb{R}^d)$ modulo spatial translations.
\begin{proposition}[Compactness of the critical element]\label{pr5.9}
Let $u_c$ be the critical element in Proposition \ref{pr5.8}, then there exists spatial translation parameter $x(t): \mathbb{R} \to \mathbb{R}^d$ such that $\left\{ u_c\left(t, x + x(t) \right): t\in \mathbb{R}\right\}$ is precompact in $H^1(\mathbb{R}^d)$.
\end{proposition}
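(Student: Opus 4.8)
The plan is to obtain precompactness modulo translations by the same concentration–compactness machinery that produced $u_c$, applied now to arbitrary time translates of $u_c$. It suffices to show that for every sequence $\{t_n\}\subset\mathbb{R}$ there are a subsequence and points $y_n\in\mathbb{R}^d$ with $\{u_c(t_n,\cdot+y_n)\}_n$ convergent in $H^1(\mathbb{R}^d)$; that this yields a translation function $x(t)$ for which the whole orbit is precompact is standard. By Lemma \ref{le2.8} and $u_c(t)\in A_{\omega,+}$ the sequence $\{u_c(t_n)\}_n$ is bounded in $H^1$, so Theorem \ref{le5.2} applies to it.

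Two preliminary remarks prepare the contradiction. Since $u_c$ is global, its $S^{s_p}$-norm over any compact time interval is finite by the local theory of Proposition \ref{th4.1}; combined with the hypothesis $\|\langle\nabla\rangle^{s_p}u_c\|_{L_{t,x}^{\frac{2(d+2)}{d}}([0,\infty)\times\mathbb{R}^d)}=\|\langle\nabla\rangle^{s_p}u_c\|_{L_{t,x}^{\frac{2(d+2)}{d}}((-\infty,0]\times\mathbb{R}^d)}=\infty$ and additivity of $L^q_t$-norms over complementary intervals, this forces $u_c$ to have infinite scattering norm on $[s,\infty)$ and on $(-\infty,s]$ for every $s\in\mathbb{R}$. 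Hence the translates $v_n(t):=u_c(t+t_n)$ satisfy $v_n(t)\in A_{\omega,+}$, $\mathcal{S}_\omega(v_n)=m_\omega^*$, and have infinite scattering norm on both half-lines through $0$. Secondly, applying Theorem \ref{le5.2} to $\{u_c(t_n)\}_n=\{v_n(0)\}_n$, the decoupling of $\mathcal{S}_\omega$ together with the sign facts from the proof of Proposition \ref{pr5.8} — that $\mathcal{K}\ge0$, hence (Lemma \ref{le2.6}) $\mathcal{S}_\omega\ge0$, on each profile $T_n^jP_n^j\varphi^j$ and on the remainder $w_n^k$ for $n$ large — yields $\sum_j\limsup_{n}\mathcal{S}_\omega(T_n^jP_n^j\varphi^j)\le m_\omega^*$.

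Then I would run the dichotomy of Proposition \ref{pr5.8}. If $\sup_j\limsup_n\mathcal{S}_\omega(T_n^jP_n^j\varphi^j)\le m_\omega^*-2\delta$ for some $\delta>0$, then by the definition of $m_\omega^*$ every nonlinear profile has finite $L_t^{\frac{2(d+2)}{d}}W_x^{s_p,\frac{2(d+2)}{d}}$-norm and, exactly as in Proposition \ref{pr5.8}, the stability theory (Proposition \ref{pr4.3}) forces $v_n$ to have finite scattering norm on $\mathbb{R}$ for $n$ large, contradicting the previous paragraph. So one profile, say $j=1$, carries the full $m_\omega^*$; decoupling then forces all other profiles to vanish and $\mathcal{S}_\omega(w_n^k)\to0$, whence $\|w_n^k\|_{H^1}\to0$ since $\mathcal{S}_\omega(w_n^k)\gtrsim\|w_n^k\|_{H^1}^2$ once $\mathcal{K}(w_n^k)\ge0$ (Lemma \ref{le2.6}). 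If the scale $h_n^1\to\infty$, Theorem \ref{th6.1v26} provides a $v_n$ with uniformly bounded scattering norm, again a contradiction; hence $h_n^1\equiv1$, and then Theorem \ref{le5.2} lets us take $\xi_n^1=0$ and $\varphi^1\in H^1$, so $u_c(t_n)=e^{i\theta_n^1}\big(e^{-it_n^1\Delta}\varphi^1\big)(\cdot-x_n^1)+w_n$ with $\|w_n\|_{H^1}\to0$.

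The last step — the one I expect to need the most care — is to rule out $t_n^1\to\pm\infty$, which is precisely where the \emph{two-sided} infiniteness of the scattering norm of $u_c$ is used. If $t_n^1\to+\infty$, then $\|\langle\nabla\rangle^{s_p}e^{it\Delta}u_c(t_n)\|_{L_{t,x}^{\frac{2(d+2)}{d}}((-\infty,0]\times\mathbb{R}^d)}\to0$, because the tail past time $-t_n^1$ of the fixed global Strichartz norm of $e^{it\Delta}\varphi^1$ vanishes while $\|w_n\|_{H^1}\to0$; the local/stability theory (Proposition \ref{pr4.3} with vanishing approximate solution, using $\|u_c(t_n)\|_{H^1}\lesssim1$) then forces the scattering norm of $v_n$ on $(-\infty,0]$, i.e.\ that of $u_c$ on $(-\infty,t_n]$, to be small — contradicting the first remark, and symmetrically for $t_n^1\to-\infty$ using the right half-line. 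Thus $t_n^1$ stays bounded; a further subsequence gives $t_n^1\to t_\infty$, $\theta_n^1\to\theta_\infty$, and strong continuity of $e^{it\Delta}$ on $H^1$ yields $u_c(t_n,\cdot+x_n^1)\to e^{i\theta_\infty}e^{-it_\infty\Delta}\varphi^1$ in $H^1(\mathbb{R}^d)$. Since $\{t_n\}$ was arbitrary, the orbit of $u_c$ is precompact in $H^1$ modulo spatial translations, which is the assertion.
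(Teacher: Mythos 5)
Your proposal is correct and follows essentially the same route as the paper: the paper's proof applies the Case~1/Case~2 dichotomy from the proof of Proposition \ref{pr5.8} to the translates $u_c(t+t_n)$ to extract a single nontrivial profile with unit scale and vanishing remainder, and then rules out $t_n'\to\pm\infty$ exactly as in your last paragraph, using the infinitude of the scattering norm of $u_c$ on both half-lines. The only difference is presentational — you spell out the profile dichotomy and the elimination of the large-scale case explicitly, while the paper cites the derivation of \eqref{eq6.183v3} and treats the bounded-time-sequence case by continuity.
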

\begin{proof}
For $\left\{t_n\right\} \subseteq \mathbb{R}$. If $t_n \to \infty$, applying the argument as deriving \eqref{eq6.183v3} to $u_c(t + t_n)$, there exist $t_n'\in \mathbb{R}$, $x_n' \in \mathbb{R}^d$, and $\phi\in H^1(\mathbb{R}^d)$ such that
\begin{equation}\label{eq:5.4}
u_c(t_n, x ) - e^{-it_n' \Delta} \phi(x - x_{n}')  \to 0 \ \text{  in }  H^1(\mathbb{R}^d), \text{ as } n\to \infty.
\end{equation}
{\rm (i)} If $t_n' \to - \infty$, then we have
\begin{align*}
\left\|\langle \nabla \rangle^{s_p} e^{it\Delta} u_c(t_n)\right\|_{L_{t,x}^\frac{2(d+2)}d([0,\infty)\times \mathbb{R}^d)}
= \left\|\langle \nabla \rangle^{s_p} e^{it\Delta} \phi\right\|_{L_{t,x}^\frac{2(d+2)}d([-t_n',\infty) \times \mathbb{R}^d)} + o_n(1)
\to 0,
\end{align*}
as $n\to \infty$.
Hence, we can solve \eqref{eq1.1} for $t > t_n$ globally by iteration with small Strichartz norm when $n$ large enough, which
contradicts
\begin{equation*}
\left\|\langle \nabla \rangle^{s_p} u_c\right\|_{L_{t,x}^\frac{2(d+2)}d ([0,\infty)\times \mathbb{R}^d)} = \infty.
\end{equation*}
{\rm (ii)} If $t_n' \to \infty$, then we have
\begin{align*}
\left\|\langle \nabla \rangle^{s_p}  e^{it\Delta} u_c(t_n)\right\|_{L_{t,x}^\frac{2(d+2)}d((-\infty,0]\times \mathbb{R}^d)}
= \left\|\langle \nabla\rangle^{s_p} e^{it\Delta} \phi\right\|_{L_{t,x}^\frac{2(d+2)}d((-\infty,-t_n'] \times \mathbb{R}^d)} + o_n(1)
\to 0,
\end{align*}
as $n\to \infty$.
 Hence, $u_c$ can solve \eqref{eq1.1} for $t < t_n$ when $n$ large enough with diminishing Strichartz norm, which contradicts
\begin{equation*}
\left\| \langle \nabla \rangle^{s_p} u_c\right\|_{L_{t,x}^\frac{2(d+2)}d  ((-\infty,0])\times \mathbb{R}^d)} = \infty.
\end{equation*}
Thus $t_n'$ is bounded, which implies that $t_n'$ is precompact, so $u_c(t_n, x + x_n')$ is precompact in $H^1(\mathbb{R}^d)$ by \eqref{eq:5.4}.

Similar argument makes sense when $t_n \to -\infty$ as above, so we will omit the proof.

If $t_n \to t^* \in \mathbb{R}, \text{ as } n\to \infty$, then we see by the continuity of $u_c(t, x  )$ in $t$ that
\begin{equation*}
u_c(t_n, x  ) \to u_c(t^*, x) \ \text{ in }  H^1(\mathbb{R}^d), \text{  as  } n \to \infty.
\end{equation*}
\end{proof}
As a consequence, we have
\begin{corollary}\label{cor5.3}
Let $u_c$ be the critical element in Proposition \ref{pr5.8}, then $\forall\, \varepsilon > 0$, there exists $R_0(\varepsilon) > 0$ and $x(t):\mathbb{R} \to \mathbb{R}^d$ such that $\forall\, t \in \mathbb{R}$,
\begin{equation}\label{eq5.28v35}
\int_{|x-x(t)|\ge R_0(\epsilon)}|\nabla u_c(t)|^2 + |u_c(t)|^2 + |u_c(t)|^{\frac{2(d+2)}d  } + |u_c(t)|^{p +1} \, \mathrm d x \le \varepsilon \mathcal{E}(u_c).
\end{equation}
Moreover, the momentum of $u_c$ is zero, and the spatial translation parameter $x(t)$ satisfies
\begin{align}\label{eq5.30v61}
\frac{|x(t)|}{t} \to 0, \quad \text{ as } t \to \pm\infty.
\end{align}
\end{corollary}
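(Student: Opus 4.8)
The plan is to read off all three assertions from Proposition \ref{pr5.9} together with the conservation laws, using a Galilean boost to kill the momentum and the truncated center of mass to control $x(t)$; the overall scheme is that of Kenig--Merle \cite{KM2}, Duyckaerts--Holmer--Roudenko \cite{Duyckaerts-Holmer-Roudenko} and Killip--Oh--Pocovnicu--Visan \cite{KTPV}. \emph{Step 1 (spatial concentration).} By Proposition \ref{pr5.9} the set $\mathcal{O}:=\{u_c(t,\cdot+x(t)):t\in\mathbb{R}\}$ is precompact in $H^1(\mathbb{R}^d)$, hence tight: for each $\varepsilon>0$ there is $R_0(\varepsilon)$ with $\int_{|y|\ge R_0(\varepsilon)}|\nabla f|^2+|f|^2\,\mathrm{d}y\le\varepsilon$ for all $f\in\mathcal{O}$. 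Since $2\le\frac{2(d+2)}{d}<\frac{2d}{d-2}$ and $2<p+1<\frac{2d}{d-2}$ (with no upper restriction when $d\le 2$), compactness of $\mathcal{O}$ in these $L^r$ spaces gives the corresponding uniform smallness of the exterior $|u_c|^{2(d+2)/d}$ and $|u_c|^{p+1}$ tails after enlarging $R_0$. Undoing the translation by $x(t)$ then yields \eqref{eq5.28v35}; the normalisation by $\mathcal{E}(u_c)$ is harmless because $u_c(t)\in A_{\omega,+}$, so by Lemma \ref{le2.6} one has $\mathcal{E}(u_c)\sim\|\nabla u_c\|_{L^2}^2+\|u_c\|_{L^{2(d+2)/d}_x}^{2(d+2)/d}$, which is a fixed positive constant since $u_c\not\equiv0$.

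\emph{Step 2 (zero momentum).} Suppose $\mathcal{P}(u_c)\neq0$; since $u_c\not\equiv0$ we have $\mathcal{M}(u_c)>0$. Set $\xi_0=-\mathcal{P}(u_c)/\mathcal{M}(u_c)$ and $\tilde u(t,x)=e^{ix\cdot\xi_0}e^{-it|\xi_0|^2}u_c(t,x-2t\xi_0)$, again a global solution of \eqref{eq1.1}. A direct computation gives $\mathcal{M}(\tilde u)=\mathcal{M}(u_c)$, $\mathcal{P}(\tilde u)=\mathcal{P}(u_c)+\xi_0\mathcal{M}(u_c)=0$, and $\mathcal{S}_\omega(\tilde u)=\mathcal{S}_\omega(u_c)+\xi_0\cdot\mathcal{P}(u_c)+\tfrac12|\xi_0|^2\mathcal{M}(u_c)=m_\omega^*-\tfrac{|\mathcal{P}(u_c)|^2}{2\mathcal{M}(u_c)}<m_\omega^*$. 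Moreover $\mathcal{H}_\omega$ depends on its argument only through the $L^2$ and $L^{p+1}$ norms (see \eqref{eq2.3new}), hence $\mathcal{H}_\omega(\tilde u(t))=\mathcal{H}_\omega(u_c(t))$ for all $t$; if $\mathcal{K}(\tilde u(t_0))<0$ for some $t_0$, then Proposition \ref{le2.4} forces $m_\omega\le\mathcal{H}_\omega(\tilde u(t_0))=\mathcal{H}_\omega(u_c(t_0))=\mathcal{S}_\omega(u_c(t_0))-\tfrac12\mathcal{K}(u_c(t_0))\le m_\omega^*<m_\omega$, a contradiction, so $\tilde u(t)\in A_{\omega,+}$ for all $t$. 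Finally $\langle\nabla\rangle^{s_p}$ commutes with the Galilean phase up to a factor $\lesssim_{\xi_0}1$, and space-time translations leave the norm invariant, so the scattering norm of $\tilde u$ on $[0,\infty)$ and on $(-\infty,0]$ is still infinite. Thus $\tilde u$ is a solution with data in $A_{\omega,+}$, $\mathcal{S}_\omega(\tilde u)<m_\omega^*$ and infinite scattering norm, contradicting the definition \eqref{eq5.2} of $m_\omega^*$; hence $\mathcal{P}(u_c)=0$.

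\emph{Step 3 ($x(t)=o(t)$).} Arguing by contradiction, suppose $\limsup_{t\to+\infty}|x(t)|/t=2\epsilon_0>0$ (the case $t\to-\infty$ is symmetric); translating in space, take $x(0)=0$. Pick $s_m\to\infty$ with $|x(s_m)|\ge\epsilon_0 s_m$, and choose a near-record time $t_m\in[0,s_m]$ with $|x(t_m)|\ge\sup_{[0,s_m]}|x|-1$; then $\sup_{[0,t_m]}|x|\le|x(t_m)|+1$, $|x(t_m)|\ge\epsilon_0 t_m-1$, and $t_m\to\infty$ (since $\sup_{[0,T]}|x|$ is finite for each finite $T$, by precompactness). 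Fix $\eta>0$, let $\rho(\eta)$ be the $H^1$-tail radius from Step 1, put $R_m=|x(t_m)|+\rho(\eta)+1$ so that $|x(t)|\le R_m-\rho(\eta)$ on $[0,t_m]$, fix $\chi\in C_c^\infty$ with $\chi\equiv1$ on $\{|x|\le1\}$ and $\operatorname{supp}\chi\subseteq\{|x|\le2\}$, and set $Z_m(t)=\int x\,\chi(x/R_m)|u_c(t,x)|^2\,\mathrm{d}x$. Using \eqref{eq1.1} (the nonlinearity does not contribute to $\partial_t|u_c|^2$),
\[
Z_m'(t)=2\int\chi(x/R_m)\,\mathrm{Im}\big(\overline{u_c}\,\nabla u_c\big)\,\mathrm{d}x+\frac{2}{R_m}\int x\,\big((\nabla\chi)(x/R_m)\cdot\mathrm{Im}(\overline{u_c}\,\nabla u_c)\big)\,\mathrm{d}x .
\]
On $\{|x|\ge R_m\}$ one has $|x-x(t)|\ge\rho(\eta)$ for $t\in[0,t_m]$, so after subtracting $2\mathcal{P}(u_c)=0$ from the first term, both terms are $\lesssim\eta$ by Cauchy--Schwarz and the tail bound; integrating, $|Z_m(t_m)-Z_m(0)|\lesssim\eta t_m$. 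On the other hand the mass of $u_c(t_m)$ lies, up to $\eta$, in $\{|x-x(t_m)|<\rho(\eta)\}\subseteq\{|x|<R_m\}$ where $\chi(x/R_m)=1$, giving $|Z_m(t_m)-x(t_m)\mathcal{M}(u_c)|\lesssim\eta|x(t_m)|+\eta R_m+\rho(\eta)\mathcal{M}(u_c)$ and $|Z_m(0)|\lesssim\eta R_m+\rho(\eta)\mathcal{M}(u_c)$. Combining these, and using $R_m\lesssim|x(t_m)|$ and $t_m\lesssim_{\epsilon_0}|x(t_m)|$, yields $\mathcal{M}(u_c)|x(t_m)|\le C\eta|x(t_m)|+C(\eta)$ with $C$ independent of $\eta$; dividing by $|x(t_m)|\to\infty$ and then letting $\eta\to0$ gives $\mathcal{M}(u_c)\le0$, a contradiction. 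Hence $|x(t)|/t\to0$ as $t\to\pm\infty$, which is \eqref{eq5.30v61}.

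The routine parts are the Galilean algebra of Step 2 and the $\eta$-bookkeeping of the error terms in Step 3. The one point that needs genuine care is the choice of the truncation scale $R_m$ in Step 3: it must satisfy $R_m\lesssim|x(t_m)|$ so that the $O(\eta R_m)$ errors can be absorbed into $\eta|x(t_m)|$, and this is exactly why one passes from the original sequence $\{s_m\}$ to the near-record times $\{t_m\}$, along which $\sup_{[0,t_m]}|x|$ is comparable to $|x(t_m)|$.
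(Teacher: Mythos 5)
Your proposal is correct and follows the same overall strategy as the paper: the concentration estimate is read off from the precompactness in Proposition \ref{pr5.9}, the zero momentum is obtained by a Galilean boost contradicting the minimality of $m_\omega^*$, and $x(t)=o(t)$ comes from a truncated center of mass (which the paper does not write out, deferring to \cite{Duyckaerts-Holmer-Roudenko,KM2,KTPV}; your Step 3 is a correct and complete version of that omitted argument, and your passage to near-record times $t_m$ to keep $R_m\lesssim|x(t_m)|$ is exactly the right bookkeeping). The one genuine difference is in Step 2: the paper chooses $\xi_0$ \emph{small}, subject to $-\tfrac12\mathcal{K}(u_c)\le|\xi_0|^2\mathcal{M}(u_c)+2\xi_0\cdot\mathcal{P}(u_c)<0$, so that $\mathcal{K}(\tilde u_c)\ge\tfrac12\mathcal{K}(u_c)\ge0$ by direct computation while $\mathcal{S}_\omega$ strictly decreases; you instead take the momentum-annihilating choice $\xi_0=-\mathcal{P}(u_c)/\mathcal{M}(u_c)$, for which $\mathcal{K}$ could a priori become negative, and recover $\mathcal{K}(\tilde u)\ge0$ from the boost-invariance of $\mathcal{H}_\omega$ together with the variational characterization \eqref{eq2.4} and $\mathcal{H}_\omega(u_c(t))\le\mathcal{S}_\omega(u_c(t))=m_\omega^*<m_\omega$. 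Both routes are valid; yours gives the quantitative drop $\mathcal{S}_\omega(\tilde u)=m_\omega^*-|\mathcal{P}(u_c)|^2/(2\mathcal{M}(u_c))$ and actually sets the momentum to zero (which is not needed here but is aesthetically cleaner), while the paper's avoids invoking Proposition \ref{le2.4} at this point.
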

\begin{proof}
We only need to show $\mathcal{P}(u_c) = 0$. If $\mathcal{P}(u_c) \ne 0$, choosing $\xi_0 \in \mathbb R^d $ such that
\[-\frac12\mathcal{K}(u_c) \le  \left|\xi_0\right|^2\mathcal{M}(u_c) + 2\xi_0\cdot\mathcal{P}(u_c) < 0,\]
and for $\tilde u_c(t,x) = e^{ix\xi_0}e^{-it|\xi_0|^2}u_c(t,x-2\xi_0 t)$, we have
\begin{align*}
&\mathcal{S}_\omega(\tilde u_c) = \mathcal{E}(\tilde u_c) +\frac{\omega}{2}\mathcal{M}(\tilde u_c) < \mathcal{E}(u_c) + \frac{\omega}{2}\mathcal{M}( u_c) = \mathcal{S}_\omega(u_c),\\
&\mathcal{K}(\tilde u_c) = \mathcal{K}(u_c) + |\xi_0|^2\mathcal{M}(u_c) + 2\xi_0\cdot\mathcal{P}(u_c) \ge \frac12 \mathcal{K}(u_c)\ge 0,\\
& \left\|\langle \nabla \rangle^{s_p} \tilde u_c \right\|_{ L_{t,x}^{\frac{2(d+2)}{d}} ((0,\infty)\times \mathbb{R}^d) }
=   \ \left\| \langle \nabla \rangle^{s_p} \tilde u_c\right\|_{ L_{t,x}^{\frac{2(d+2)}{d}} ((-\infty,0) \times \mathbb{R}^d) }   = \infty.
\end{align*}
This contradicts the definition of $u_c$. We can obtain \eqref{eq5.30v61} by using the truncated center of mass together with $\mathcal{P}(u_c) = 0$. We will not prove it, and refer to \cite{Duyckaerts-Holmer-Roudenko,KM2,KTPV} for similar argument.
\end{proof}

\section{Extinction of the critical element}\label{se5}
In this section, we prove the non-existence of the critical element by deriving a contradiction from Corollary \ref{cor5.3} and the virial identity.

For a real-valued function $\phi\in C^\infty(\mathbb{R}^d)$, we can define the localize virial quantity:
\begin{equation}\label{eq6.1v10}
V_R(t) = \int_{\mathbb{R}^d} R^2 \phi\left(\frac{|x|}R\right)  |u(t,x)|^2  \,\mathrm{d}x, \,\forall \, R > 0.
\end{equation}
Then, for $u \in C(I; H^1(\mathbb{R}^d))$, we have
\begin{align}
V_R'(t)  = &    2R  \Im \int_{\mathbb{R}^d} \phi'\left(\frac{|x|}R \right) \frac{x}{|x|}  \cdot \nabla u(t,x) \,\overline{u(t,x)} \,\mathrm{d}x, \label{eq6.2v10}\\
V_R''(t)  = &   4\Re \int_{\mathbb{R}^d} \partial_j\partial_k \phi_R(x)
\overline{\partial_j u(t,x)} \partial_k u(t,x) \,\mathrm{d}x
- \int_{\mathbb{R}^d} \Delta^2 \phi_R(x) |u(t,x)|^2 \,\mathrm{d}x  \nonumber \\
   &   + \frac{4}{d+2 } \int_{\mathbb{R}^d} \Delta \phi_R(x) |u(t,x)|^{\frac{2(d+2)} d} \,\mathrm{d}x
- \frac{2(p -1)}{p +1} \int_{\mathbb{R}^d} \Delta \phi_R(x) |u(t,x)|^{p +1} \,\mathrm{d}x, \label{eq6.3v10}
\end{align}
where $\phi_R(x) = R^2 \phi\left(\frac{|x|}R\right)$.
\begin{theorem}[Nonexistence of the critical element] \label{th6.2v36}
The critical element $u_c$ in Proposition \ref{pr5.8} does not exist.
\end{theorem}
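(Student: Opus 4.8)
The proof proceeds by a localized virial argument: I will show that $V_R''(t)\approx 8\mathcal{K}(u_c(t))$ stays above a fixed positive constant along the critical element, while $|V_R'(t)|$ grows at most linearly in $R$; balancing $R$ against the length of the time interval, using the zero‑momentum estimate $x(t)=o(t)$, then forces a contradiction. \textbf{Step 1 (uniform coercivity of $\mathcal{K}(u_c(t))$).} First note $u_c(t)\ne 0$ for every $t$, since otherwise uniqueness forces $u_c\equiv 0$, contradicting $\mathcal{S}_\omega(u_c)=m_\omega^*>0$. Because $\mathcal{K}(u_c(t))\ge 0$, Lemma \ref{le2.6} gives $\|\nabla u_c(t)\|_{L^2}^2+\|u_c(t)\|_{L^{2(d+2)/d}}^{2(d+2)/d}\sim \mathcal{E}(u_c)$, and the same lemma together with $u_c(t)\ne 0$ shows $\mathcal{E}(u_c)>0$ is a fixed constant. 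Feeding this and $\mathcal{S}_\omega(u_c(t))\equiv m_\omega^*<m_\omega$ into the energy‑trapping Proposition \ref{le2.9}, I obtain a constant $\beta=\beta(d,p,\omega,u_c)>0$ with
\[\mathcal{K}(u_c(t))\ge 8\beta\qquad\text{for all }t\in\mathbb{R}.\]

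\textbf{Step 2 (virial identity and its error).} I take $\phi$ as in \eqref{eq6.1v10} radial with $\phi(r)=r^2$ for $r\le 1$, $\phi$ constant for $r\ge 2$, and $|\nabla\phi|+|\nabla^2\phi|\lesssim 1$; set $\phi_R(x)=R^2\phi(|x|/R)$ and let $V_R$ be the corresponding quantity. On $\{|x|\le R\}$ one has $\partial_j\partial_k\phi_R=2\delta_{jk}$, $\Delta\phi_R=2d$, $\Delta^2\phi_R=0$, so a direct inspection of \eqref{eq6.3v10} gives $V_R''(t)=8\mathcal{K}(u_c(t))+\mathcal{R}_R(t)$ with
\[|\mathcal{R}_R(t)|\lesssim \int_{|x|\ge R}\!\Big(|\nabla u_c(t)|^2+|u_c(t)|^{2(d+2)/d}+|u_c(t)|^{p+1}\Big)\,\mathrm{d}x+\frac{\mathcal{M}(u_c)}{R^2}.\]
From \eqref{eq6.2v10} and the uniform $H^1$ bound of Lemma \ref{le2.8} one also has $|V_R'(t)|\lesssim R$ uniformly in $t$.

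\textbf{Step 3 (closing the argument).} Fix $\varepsilon>0$ so small that $C\varepsilon\,\mathcal{E}(u_c)\le\beta$ (with $C$ the constant from Step 2), and let $R_0(\varepsilon)$ be as in Corollary \ref{cor5.3}. Given $\delta>0$ to be chosen, \eqref{eq5.30v61} provides $T_0$ with $|x(t)|\le\delta t$ for $t\ge T_0$. For $T>T_0$ set $R=R_0(\varepsilon)+\delta T$; then for every $t\in[T_0,T]$ one has $\{|x|\ge R\}\subseteq\{|x-x(t)|\ge R_0(\varepsilon)\}$, so Corollary \ref{cor5.3} yields $|\mathcal{R}_R(t)|\le C\varepsilon\,\mathcal{E}(u_c)+C\mathcal{M}(u_c)(\delta T)^{-2}\le 2\beta$ once $T$ is large. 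Integrating $V_R''$ over $[T_0,T]$ gives $V_R'(T)-V_R'(T_0)\ge (8\beta-2\beta)(T-T_0)\ge 4\beta(T-T_0)$, while Step 2 forces $|V_R'(T)-V_R'(T_0)|\le C_1(R_0(\varepsilon)+\delta T)$. Choosing $\delta$ small enough that $C_1\delta\le\beta$, these two bounds give $3\beta T\lesssim R_0(\varepsilon)+\beta T_0$ for all large $T$, which is absurd. Hence $u_c$ cannot exist.

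\textbf{Main obstacle.} The delicate point is the tuning in Step 3: $R$ must grow with $T$ fast enough to keep the shell $\{|x|\ge R\}$ outside the concentration core $\{|x-x(t)|\le R_0(\varepsilon)\}$ for the \emph{whole} interval $[T_0,T]$ — which is exactly why the zero‑momentum property $x(t)=o(t)$ of Corollary \ref{cor5.3} is indispensable — yet slowly enough that the coercive linear‑in‑$T$ gain of $V_R'$ outruns its $O(R)$ a priori bound. Everything else reduces to the standard virial computation \eqref{eq6.2v10}--\eqref{eq6.3v10} and the variational estimates of Section \ref{se2v9}.
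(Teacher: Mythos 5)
Your proposal is correct and follows essentially the same localized virial argument as the paper: the same cutoff $\phi$, the same bounds $|V_R'|\lesssim R$ and $V_R''=8\mathcal{K}(u_c)+O(\text{exterior terms})$, the same choice $R=R_0(\varepsilon)+\eta t_1$ exploiting $x(t)=o(t)$, and the same coercivity $\mathcal{K}(u_c)\gtrsim\mathcal{E}(u_c)>0$ from Lemma \ref{le2.6} and Proposition \ref{le2.9}. The only (harmless) blemish is the constant bookkeeping in Step 3, where $\mathcal{K}\ge 8\beta$ gives $8\mathcal{K}\ge 64\beta$ rather than the written $8\beta$; the inequality you need still holds.
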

\begin{proof}
Let weighted function $\phi \in C_0^\infty(\mathbb{R}^d)$ in \eqref{eq6.1v10} be radial with
\begin{align}\label{eq6.12v35}
\phi(x)=
\begin{cases}
|x|^2, & |x| \le 1,\\
0, & |x| \ge 2,
\end{cases}
\end{align}
then by \eqref{eq6.2v10}, the H\"older inequality and Lemma \ref{le2.8},
\begin{align}\label{eq:6.6}
|V_R^{'}(t)| \le    CR\int_{|x| \le 2R} \left|\nabla u_c(t,x)\right| \left|u_c(t,x)\right|\, \mathrm{d} x \le    CR \left\|u_c\right\|_{L^2} \left\|\nabla u_c\right\|_{L^2} \lesssim R.
\end{align}
On the other hand, by \eqref{eq6.3v10} and direct calculation, we have
\begin{align}
V_R^{''}(t)  =   8\mathcal{K}(u_c) + A_R(u_c(t)),\label{eq:6.7}
 \end{align}
where
\begin{align*}
& A_R(u_c(t)) \\
= &  \  4\sum_{j=1}^d\int_{\mathbb R^d}\Big((\partial_j^2\phi)\Big(\frac{x}{R}\Big)-2\Big)|\partial_ju_c|^2\, \mathrm{d}x \\
 &  + 4\Re \sum_{1\le j\ne l\le d }\int_{R \le |x| \le 2R}(\partial_j\partial_l \phi)\Big(\frac{x}{R}\Big)\partial_j\bar u_c \partial_l  u_c \, \mathrm{d} x - \frac{1}{R^2}\int_{\mathbb R^d}(\Delta^2\phi)\Big(\frac{x}{R}\Big)|u_c|^2 \, \mathrm{d}x \\
 & + \frac{4}{d + 2 }\int_{\mathbb R^d}\Big((\Delta\phi)\Big(\frac{x}{R}\Big) -2d\Big)|u_c(t,x)|^{\frac{2(d+2)}d} \ \mathrm d x \\
  & - \frac{2(p -1)}{p +1}\int_{\mathbb R^d}\Big((\Delta\phi)\Big(\frac{x}{R}\Big) -2d\Big)|u_c(t,x)|^{p +1} \, \mathrm{d} x.
\end{align*}
By the choice of the weighted function $\phi$ in \eqref{eq6.12v35}, we obtain the bound
\begin{align}\label{eq:6.8}
|A_R(u_c(t))| \le C\int_{|x| \ge R} |\nabla u_c|^2 + \frac{1}{R^2}|u_c|^2 + |u_c|^{ \frac{2(d+2)}d } + |u_c|^{p +1} \, \mathrm{d} x.
\end{align}
We want to examine $V_R(t)$, for $R$ chosen suitably large, over a suitably chosen time interval $[t_0, t_1]$, where $1<< t_0 << t_1 <\infty$.

By \eqref{eq5.30v61}, we have $|x(t)| \le \eta t$, for all $t \ge t_0$, with $\eta > 0$ to be selected later. Thus, by taking $R = R_0 + \eta t_1$, where $R_0 = R(\epsilon)$ is taken in \eqref{eq5.28v35}, then \eqref{eq:6.7} combined with the bounds \eqref{eq:6.8} and \eqref{eq5.28v35} will imply that, for all $t_0 \le t \le t_1$,
\begin{align}\label{eq:6.11}
\left|V_R^{''}(t)\right| \ge 8\mathcal{K}(u_c(t)) -  C \varepsilon.
\end{align}
Integrating \eqref{eq:6.11} over $[t_0, t_1]$, we obtain
\begin{align}\label{eq:6.12}
\left|V_R^{'}(t_1) -V_R^{'}(t_0) \right| \ge \left(8\mathcal{K}(u_c(t)) - C  \varepsilon\right) \left(t_1-t_0 \right).
\end{align}
On the other hand, by \eqref{eq:6.6}, we have
\begin{align}\label{eq:6.13}
\left|V_R^{'}(t)\right| \lesssim R, \ \forall\, t_0 \le t \le t_1.
\end{align}
Combining \eqref{eq:6.12}, \eqref{eq:6.13}, we obtain
\[\left(8\mathcal{K}(u_c(t)) - C \varepsilon \right)\left(t_1-t_0 \right) \lesssim R.\]
By Lemma \ref{le2.6} and Proposition \ref{le2.9}, we have $\mathcal{K}(u_c) \gtrsim \mathcal{E}(u_c)> 0$, we can take $\varepsilon > 0$ small enough such that $8\mathcal{K}(u_c(t)) - C\varepsilon \gtrsim \mathcal{E}(u_c) $, we then obtain
\[\mathcal{E}(u_c)(t_1-t_0) \lesssim R \lesssim R_0 + \eta t_1.\]
We now take $\eta > 0$ small enough and then send $t_1 \to \infty$ to obtain a contradiction unless $\mathcal{E}(u_c) = 0$, which implies $u_c  \equiv 0$.
\end{proof}

\section{Sketch of the proof of Theorem \ref{th1.3v16} } \label{se8v19}
In this section, we give a sketch of the proof of Theorem \ref{th1.3v16}. Most of the argument are similar to the proof of Theorem \ref{th1.4}. The equation \eqref{eq1} has the conservation quantities:
\begin{align*}
\mathcal{M}(u)    = \int_{\mathbb{R}^d} |u|^2 \,\mathrm{d}x, \
\mathcal{E}(u)     = \int_{\mathbb{R}^d} \frac12 |\nabla u|^2 + \frac1{p+1} |u|^{p+1} - \frac{d}{2(d+2)} |u|^\frac{2(d+2)}d \,\mathrm{d}x.
\end{align*}
We will rely on the sharp Galiardo-Nirenberg inequality:
\begin{theorem}[Sharp Galiardo-Nirenberg inequality, \cite{W}]
Let $Q$ be the ground state solution of \eqref{eq1.10v30}, we have
\begin{align*}
\int_{\mathbb{R}^d} |f(x)|^\frac{2(d+2)}d \,\mathrm{d}x \le \frac{d+2}d \left( \frac{ \|f\|_{L^2} }{ \|Q\|_{L^2}} \right)^\frac4d \int_{\mathbb{R}^d} |\nabla f(x)|^2\,\mathrm{d}x.
\end{align*}
\end{theorem}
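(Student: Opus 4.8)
The plan is to realize the sharp constant as the reciprocal of the infimum of the Weinstein functional and then to evaluate that infimum on the ground state by Pohozaev identities. Writing $\sigma=\frac{2(d+2)}{d}=2+\frac4d$, set
\begin{equation*}
J(f)=\frac{\|\nabla f\|_{L^2}^2\,\|f\|_{L^2}^{4/d}}{\|f\|_{L^\sigma}^{\sigma}},\qquad J_{\min}=\inf\bigl\{J(f):f\in H^1(\mathbb{R}^d)\setminus\{0\}\bigr\}.
\end{equation*}
The non-sharp Gagliardo--Nirenberg inequality gives $J_{\min}\in(0,\infty)$, and the asserted estimate is exactly the bound $J(f)\ge J_{\min}$ together with the evaluation $J_{\min}=\frac{d}{d+2}\|Q\|_{L^2}^{4/d}$: rearranging $J(f)\ge J_{\min}$ produces $\|f\|_{L^\sigma}^\sigma\le\frac{d+2}{d}\bigl(\|f\|_{L^2}/\|Q\|_{L^2}\bigr)^{4/d}\|\nabla f\|_{L^2}^2$. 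Throughout I use that $J$ is invariant under the two-parameter rescaling $f\mapsto\mu f(\lambda\,\cdot)$, $\mu,\lambda>0$.

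First I would show $J_{\min}$ is attained. Take a minimizing sequence $\{f_n\}$ and rescale so that $\|f_n\|_{L^2}=\|\nabla f_n\|_{L^2}=1$; then $\{f_n\}$ is bounded in $H^1$ and $\|f_n\|_{L^\sigma}^\sigma\to J_{\min}^{-1}>0$. Replacing $f_n$ by the Schwarz symmetrization $f_n^\ast$ preserves $\|f_n\|_{L^2}$ and $\|f_n\|_{L^\sigma}$ and does not increase $\|\nabla f_n\|_{L^2}$, hence does not increase $J$; so the minimizing sequence may be taken nonnegative, radial and radially nonincreasing. Since $\sigma$ is $H^1$-subcritical, the embedding $H^1_{\mathrm{rad}}(\mathbb{R}^d)\hookrightarrow L^\sigma(\mathbb{R}^d)$ is compact (a routine direct argument handles $d=1$), so along a subsequence $f_n\rightharpoonup Q_0$ weakly in $H^1$ and strongly in $L^\sigma$; then $\|Q_0\|_{L^\sigma}^\sigma=J_{\min}^{-1}>0$, and weak lower semicontinuity gives $J(Q_0)\le\liminf_n J(f_n)=J_{\min}$. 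Thus $Q_0$ is a nonzero minimizer, and $Q_0\ge 0$ since it is a limit of nonnegative functions.

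Any minimizer obeys the Euler--Lagrange equation $\tfrac{d}{d\varepsilon}J(Q_0+\varepsilon\phi)\big|_{\varepsilon=0}=0$, which after clearing denominators has the form $c_1\Delta Q_0-c_2 Q_0+c_3\,Q_0^{1+4/d}=0$ with positive constants $c_1,c_2,c_3$. Choosing $Q(x)=\alpha Q_0(\beta x)$ with suitable $\alpha,\beta>0$ turns this into $\Delta Q-Q+|Q|^{4/d}Q=0$, i.e.\ $Q$ is a positive $H^1$ solution of \eqref{eq1.10v30}; by elliptic regularity it is smooth and exponentially decaying, hence (up to translation) it is the ground state, and $J(Q)=J(Q_0)=J_{\min}$. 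Pairing the equation with $Q$ gives $\|\nabla Q\|_{L^2}^2+\|Q\|_{L^2}^2=\|Q\|_{L^\sigma}^\sigma$, and the Pohozaev identity (pairing with $x\cdot\nabla Q$) gives $\frac{d-2}{2}\|\nabla Q\|_{L^2}^2+\frac d2\|Q\|_{L^2}^2=\frac d\sigma\|Q\|_{L^\sigma}^\sigma$; solving this linear system yields
\begin{equation*}
\|Q\|_{L^\sigma}^\sigma=\frac{d+2}{2}\|Q\|_{L^2}^2,\qquad \|\nabla Q\|_{L^2}^2=\frac d2\|Q\|_{L^2}^2,
\end{equation*}
whence $J_{\min}=J(Q)=\frac{d}{d+2}\|Q\|_{L^2}^{4/d}$ and the inequality follows, with equality for $f=Q$. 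The genuinely delicate point is the compactness in the second step: symmetrization sidesteps the full Lions concentration--compactness dichotomy, but one still must check that each of the three quantities in $J$ transforms correctly under symmetrization --- exactly the feature of the mass-critical exponent that makes the argument close up.
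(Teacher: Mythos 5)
Your argument is correct and is essentially Weinstein's original proof, which is exactly what the paper invokes here (the theorem is quoted from \cite{W} without proof): minimization of the Weinstein functional via symmetrization and the compact radial embedding, passage to the Euler--Lagrange equation, rescaling to \eqref{eq1.10v30}, and evaluation of the sharp constant by the two Pohozaev-type identities, whose algebra ($\|\nabla Q\|_{L^2}^2=\tfrac d2\|Q\|_{L^2}^2$, $\|Q\|_{L^{\sigma}}^{\sigma}=\tfrac{d+2}2\|Q\|_{L^2}^2$) you carry out correctly. The only implicit input worth flagging is the uniqueness (up to translation) of the positive radial decreasing solution of \eqref{eq1.10v30}, which you need in order to identify your minimizer with \emph{the} ground state $Q$ appearing in the statement; this is Kwong's theorem and is standard to cite.
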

By the sharp Galiardo-Nirenberg inequality together with the the assumption $\|u_0\|_{L^2} < \|Q\|_{L^2}$ in Theorem \ref{th1.3v16} and mass-conservation, we have
\begin{align}\label{eq8.2v36}
\mathcal{E}(u)   \ge  \frac12 \left( 1-  \left( \frac{ \|u\|_{L^2}}{ \|Q\|_{L^2}} \right)^\frac4d \right) \|\nabla u \|_{L^2}^2 + \frac1{p+1} \|u \|_{L^{p+1}}^{p+1} \gtrsim \|\nabla u \|_{L^2}^2 + \|u \|_{L^{p+1}}^{p+1}.
\end{align}
Therefore, we get the energy can control the kinetic energy, similar as the estimate in Lemma \ref{le2.8}. The wellposedness and stability theory in Section \ref{se4v24} can be modified easily to adapt to \eqref{eq1}, while the linear profile decomposition in Section \ref{se5v24} can be applied directly without any change. The induction on energy argument, especially $m_\omega^*$ in \eqref{eq5.2} in the beginning of Section \ref{se6v24} should be replaced by
\begin{align*}
m^* = \sup \left\{ m \in \left( 0, \mathcal{M}(Q) \right) : \Lambda(m) < \infty\right\},
\end{align*}
with
\begin{align*}
\Lambda(m)  = \sup \left\| \langle \nabla \rangle^{s_p} u\right\|_{L_{t,x}^\frac{2(d+2)}d},
\end{align*}
where the supremum is taken over the solution $u$ to \eqref{eq1} with $u_0 \in H^1$ and $\mathcal{M}(u_0) \le m < \mathcal{M}(Q)$. The approximate solution in Theorem \ref{th6.1v26} should be a solution of the mass-critical focusing nonlinear Schr\"odinger equation instead of the defocusing equation as for \eqref{eq1.1}, and we need to use the scattering theorem of the mass-critical focusing nonlinear Schr\"odinger equation when the mass is less than the mass of the ground state solution \cite{D4}. The remaining part in Section \ref{se6v24} follows with proper modification. In Section \ref{se5}, for the virial quantity
\begin{equation*}
V(t) = \int_{\mathbb{R}^d} |x|^2  |u(t,x)|^2  \,\mathrm{d}x,
\end{equation*}
as in \eqref{eq6.1v10} when $R = 1$, we have
\begin{align*}
V''(t)  = & 8 \int_{\mathbb{R}^d} |\nabla u |^2 +  \frac{d(p-1)}{2(p+1)} |u |^{p+1} - \frac{d}{d+2} |u |^\frac{2(d+2)}d \,\mathrm{d}x.
\end{align*}
By the sharp Gagliardo-Nirenberg inequality as \eqref{eq8.2v36}, we have
\begin{align*}
V''(t) \gtrsim \|\nabla u \|_{L^2}^2 + \|u \|_{L^{p+1}}^{p+1} \gtrsim \mathcal{E}(u),
\end{align*}
thus we need to modify the lower bound in \eqref{eq:6.11} with some constant $c = c(d,p)$ times $\mathcal{E}(u)$ instead of $\mathcal{K}(u)$.

\textbf{Acknowledgements}
{Xing Cheng has been partially supported by the NSF grant of China (No.11526072). Xing Cheng is grateful to S. Le Coz for calling attention to their work \cite{Le Coz-Tsai}, and also thank A. Stefanov for sending his recent work \cite{St} to the author.}

\end{document}